\newcommand{\sect}%
{
  \setcounter{equation}{0}%
  \setcounter{figure}{0}%
  \section
}
\newcommand{\R}{\mbox{$\mathbb{R}$}}
\newcommand{\N}{\mbox{$\mathbb{N}$}}
\newcommand{\E}{\mbox{$\mathbf{E}$}}
\newcommand{\F}{\mbox{$\mathcal{F}$}}
\newcommand{\D}{\mbox{$\mathcal{D}$}}
\newcommand{\LOH}{L^p(\Omega;H)}
\newcommand{\LOR}{L^p(\Omega;\mathbb{R})}
\newcommand{\dt}{\, \mathrm{d}t}
\newcommand{\ds}{\, \mathrm{d}\sigma}
\newcommand{\dWt}{\, \mathrm{d}W(t)}
\newcommand{\dWs}{\, \mathrm{d}W(\sigma)}
\newcommand{\ee}{\mathrm{e}}
\theoremstyle{plain}
\newtheorem{definition}{Definition}[section]
\newtheorem{theorem}[definition]{Theorem}
\newtheorem{lemma}[definition]{Lemma}
\newtheorem{corollary}[definition]{Corollary}
\newtheorem{assumption}[definition]{Assumption}
\theoremstyle{definition}
\newtheorem*{remark}{Remark}
\newtheorem*{remarks}{Remarks}
\newtheorem{example}[definition]{Example}
\begin{document}

\title[Optimal Error Estimates of Galerkin Methods for SPDEs --
\today]{Optimal Error Estimates\\
of Galerkin Finite Element Methods\\
for Stochastic Partial Differential Equations\\
with Multiplicative Noise}

\author[R.~Kruse]{Raphael Kruse$^*$}
%\email{rkruse@math.uni-bielefeld.de}

\keywords{SPDE, finite element method, spectral Galerkin method,
multiplicative noise, spatially semidiscrete, Lipschitz nonlinearities, optimal
error estimates, spatio-temporal discretization} 
\subjclass[2010]{60H15, 65C30, 65M60, 65M70}

% address of first author
\footnotetext[1]{Department of Mathematics, Bielefeld University, P.O. Box
100131, 33501 Bielefeld, Germany \\ 
supported by CRC 701 'Spectral Analysis and Topological Structures 
in Mathematics'.}

% address of second author
%\footnotetext[2]{Department of Mathematical Sciences, ... \\ supported by ...}

\begin{abstract}
  We consider Galerkin finite element methods for semilinear stochastic partial
  differential equations (SPDEs) with multiplicative noise and Lipschitz
  continuous 
  nonlinearities. We analyze the strong error
  of convergence for spatially semidiscrete approximations as well as a
  spatio-temporal discretization which is based on a linear implicit
  Euler-Maruyama method. In both cases we obtain optimal error estimates.

  The proofs are based on sharp integral versions of well-known error
  estimates for the corresponding deterministic linear homogeneous equation
  together with optimal regularity results for the mild solution of the SPDE.
  The results hold for 
  different Galerkin methods such as the standard finite 
  element method or spectral Galerkin approximations.
\end{abstract}

\maketitle

\sect{Introduction}
\label{sec:intro}

This article is devoted to the analysis of numerical schemes for the
discretization of stochastic partial differential equations (SPDEs) with
multiplicative noise. For the
last $15$ years this has been an very active field of research. An extensive
list of references can be found in the review article \cite{jentzen2009c}. 

Here we apply the well-established theory of Galerkin finite element methods
from \cite{thomee2006} and, in combination with optimal spatial and temporal
regularity results \cite{jentzen2010b, kl2010a}, we derive optimal error
estimates for spatially semidiscrete as well as for spatio-temporal
approximation schemes. Our analysis is suitable to treat different Galerkin
methods such as the finite element method or spectral Galerkin methods in a
unified setting.

We begin with a probability space $(\Omega, \F, P)$ together with a normal
filtration $(\F_t)_{t \in [0,T]} \subset \F$. By $W \colon [0,T]
\times \Omega \to U$ we denote an adapted 
$Q$-Wiener process with values in a separable Hilbert space $(U, (\cdot,
\cdot)_U, \| \cdot \|_U)$. The covariance operator $Q \colon U \to U$ is
assumed to be linear, bounded, self-adjoint and positive semidefinite.  

Further, let $\left(H,( \cdot, \cdot ), \| \cdot
\| \right)$ be another separable Hilbert space and $A \colon D(A) \subset H \to
H$ a linear operator, which is 
densely defined, self-adjoint, positive definite, not
necessarily bounded but with compact inverse. Hence, there exists an increasing
sequence of real numbers $(\lambda_n)_{n \ge 1}$ and an orthonormal basis of
eigenvectors $(e_n)_{n \ge 1}$ in $H$ such that $Ae_n = \lambda_n e_n$ and 
\begin{align*}
  0 < \lambda_1 \le \lambda_2 \le \ldots \le \lambda_n (\to \infty).
\end{align*}
The domain of $A$ is characterized by 
\begin{align*}
  D(A) = \Big\{ x \in H \, : \, \sum_{n = 1}^{\infty} \lambda_n^2 (x,e_n)^2
  < \infty
  \Big\}. 
\end{align*}
Thus, $-A$ is the generator of an analytic semigroup of contractions which is
denoted by $E(t) = \ee^{-At}$. 

As our main example we have the following in mind: $H$ is the space $L^2 (\D)$,
where $\mathcal{D} \subset \R^d$ is a bounded domain with smooth boundary
$\partial \D$ or a convex domain with polygonal boundary. Then, for example,
let $-A$ be the Laplacian with homogeneous Dirichlet boundary conditions. 

Next, we introduce the stochastic process, which we want to approximate.
Let $X \colon [0,T] \times \Omega \to H$, $T > 0$, denote the
mild solution \cite[Ch. 7]{daprato1992} to the stochastic partial
differential equation 
\begin{align}
  \begin{split}
    \mathrm{d}X(t) + \left[ AX(t) + f(X(t)) \right] \dt &= g(X(t)) \dWt,
  \text{ for } 0 \le t \le T,\\
  X(0) &= X_0.
\end{split}
\label{eq1:SPDE}
\end{align}
Here $f$ and $g$ denote nonlinear operators which are Lipschitz continuous in
an appropriate sense. In Section \ref{sec:SPDE} we give a precise formulation
of our conditions on $f$, $g$ and $X_0$, which are also sufficient for
the existence and uniqueness of $X$. 

By definition the mild solution satisfies
\begin{align}
  X(t) = E(t) X_0 - \int_0^t E(t-\sigma) f(X(\sigma)) \ds + \int_0^t
  E(t-\sigma) g(X(\sigma)) \dWs 
  \label{eq1:mild}
\end{align}
for all $0 \le t \le T$. 

Our aim is to analyze numerical schemes which are used to approximate the
solution $X$. For an implementation one needs to
discretize the time interval $[0,T]$ as well as the Hilbert spaces $H$ and $U$,
since both are potentially high or infinite dimensional. 

In this paper we deal with the discretization of the time interval $[0,T]$ and
the Hilbert space $H$. A fully discrete approximation of the mild solution $X$,
which also includes the discretization of the space $U$, will be done in a
forthcoming paper.  

Our first result is concerned with a so called spatially semidiscrete
approximation of \eqref{eq1:mild}, that is, we only discretize with respect to
the Hilbert space $H$. 

By $(S_h)_{h \in (0,1]}$ we denote a family
of finite dimensional subspaces of $H$ consisting of spatially regular
functions. In our 
example with $H = L^2(\D)$, $S_h$ 
may be a standard finite element space or the linear span of finitely many
eigenfunctions of $A$ (see Examples \ref{ex:1} and \ref{ex:2}). 

Let the stochastic process $X_h\colon[0,T]\times \Omega \to S_h$
solve the stochastic evolution equation 
\begin{align}
  \begin{split}
    \mathrm{d}X_h(t) + [ A_h X_h(t) + P_h f(X_h(t)) ] \dt &= P_h g(X_h(t))
    \dWt, \text{ for } 0 \le t \le T,\\
    X_h(0) &= P_h X_0,
  \end{split}
  \label{eq4:SPDEsemi}
\end{align}
where $P_h$ denotes the (generalized) orthogonal projector onto $S_h$ and
$A_h\colon S_h \to S_h$ is a discrete version of the operator $A$ which will be
defined in \eqref{eq3:A_h}.

As for the continuous problem
\eqref{eq1:SPDE} there exists a unique mild solution $X_h$ to equation
\eqref{eq4:SPDEsemi} which satisfies
\begin{multline}
  X_h(t) = E_h(t) P_h X_0 - \int_{0}^{t} E_h(t-\sigma) P_h f(X_h(\sigma)) \ds
  \\+
  \int_{0}^{t} E_h(t-\sigma) P_h g(X_h(\sigma)) \dWs \text{ for } 0 \le t \le T.
  \label{eq4:mildsemi}
\end{multline}

This paper deals with the strong error of convergence. Here, strong convergence
is understood in the sense of convergence with respect to the norm 
\begin{align*}
  \| Z \|_{\LOH} = \big( \E \big[ \big\| Z \big\|^p \big]
  \big)^{\frac{1}{p}}, \; p \ge 2,
\end{align*}
where $\E$ is the expectation with respect to $P$. Therefore, strong
convergence indicates a good pathwise approximation.  

In many applications the aim is to approximate the expectation $\E [ 
\varphi(X(T)) ]$, where $\varphi$ is a smooth observable. This leads to the
concept of weak convergence which, in the context of SPDEs, is considered in
\cite{debussche2009, debussche2011,larsson2009a, hausenblas2010}. 
However, as it was shown by Giles \cite{giles2008a, giles2008b}, the strong
order of convergence is also essential for developing efficient multilevel
Monte Carlo methods for applications where the weak approximation is of
interest.  

Before we formulate our first main result let us explain two of the most
crucial parameters. First, we have the parameter $r \in [0,1)$ which controls
the spatial 
regularity of the mild solution $X$. On the other hand, the parameter $h \in
(0,1]$ governs the granularity of the spatial approximation. In our example
with $H = L^2(\D)$ and under the given assumptions, the mild solution $X(t)$
maps into the fractional 
Sobolev space $H_0^1(\D) \cap H^{1+r}(\D)$ and $h$ denotes the maximal
length of an edge in a partition of $\D$ into simplices. 

\begin{theorem}
  \label{th:semi}
  Under the assumptions of Section \ref{sec:SPDE} with $r \in [0,1)$, $p \in
  [2, \infty)$, and
  Assumption \ref{as:4} 
  there exists a constant $C$, independent of $h\in (0,1]$, such that
  \begin{align*}
    \| X_h(t) - X(t) \|_{\LOH} \le C h^{1 + r}, \text{ for all } t \in (0,T],
  \end{align*}
  where $X_h$ and $X$ denote the mild solutions to \eqref{eq4:SPDEsemi} and
  \eqref{eq1:SPDE}, respectively.
\end{theorem}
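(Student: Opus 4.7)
The plan is to subtract the mild formulation \eqref{eq4:mildsemi} from \eqref{eq1:mild} and decompose the resulting difference into contributions from the initial value, the drift, and the diffusion. Introducing the deterministic error operator $F_h(t) := E(t) - E_h(t) P_h$, the triangle inequality in $\LOH$ yields
\begin{align*}
\| X(t) - X_h(t) \|_{\LOH}
&\le \| F_h(t) X_0 \|_{\LOH} \\
&\quad + \Big\| \int_0^t F_h(t-\sigma) f(X(\sigma)) \ds \Big\|_{\LOH} \\
&\quad + \Big\| \int_0^t E_h(t-\sigma) P_h \bigl( f(X(\sigma)) - f(X_h(\sigma)) \bigr) \ds \Big\|_{\LOH} \\
&\quad + \Big\| \int_0^t F_h(t-\sigma) g(X(\sigma)) \dWs \Big\|_{\LOH} \\
&\quad + \Big\| \int_0^t E_h(t-\sigma) P_h \bigl( g(X(\sigma)) - g(X_h(\sigma)) \bigr) \dWs \Big\|_{\LOH}.
\end{align*}

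The first, second, and fourth terms are controlled by sharp error estimates for $F_h$ together with the optimal spatial regularity of the mild solution $X$ from Section \ref{sec:SPDE} (cf.\ \cite{jentzen2010b, kl2010a}). The pointwise bound $\| F_h(t) A^{-(1+r)/2} \| \le C h^{1+r}$ handles the initial-value contribution. For the drift term, an integrated version of the same deterministic estimate, combined with bounds on $f(X(\sigma))$ in a suitable fractional power space of $A$, yields the rate $h^{1+r}$. For the stochastic convolution, I apply the Burkholder--Davis--Gundy inequality to pass from the $L^p(\Omega)$ norm to a square Hilbert--Schmidt integral and then invoke the sharp integrated estimate
\begin{align*}
\Big( \int_0^t \| F_h(t-\sigma) \Phi \|_{HS}^2 \ds \Big)^{1/2} \le C h^{1+r} \sup_{\sigma \in [0,t]} \| A^{r/2} \Phi(\sigma) \|_{HS},
\end{align*}
with $\Phi(\sigma) := g(X(\sigma)) Q^{1/2}$, whose Hilbert--Schmidt norm in the relevant fractional power space is finite by the regularity assumptions on $g$ and $X$.

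The third and fifth terms depend on the error itself. Using the contraction property $\| E_h(t) P_h \| \le 1$ and the Lipschitz continuity of $f$, the third term is bounded by $C \int_0^t \| X(\sigma) - X_h(\sigma) \|_{\LOH} \ds$. For the fifth term, Burkholder--Davis--Gundy combined with the Hilbert--Schmidt Lipschitz assumption on $g$ produces a similar bound of the form $C \bigl( \int_0^t \| X(\sigma) - X_h(\sigma) \|_{\LOH}^2 \ds \bigr)^{1/2}$. Squaring the resulting inequality and collecting constants gives
\begin{align*}
\| X(t) - X_h(t) \|_{\LOH}^2 \le C h^{2(1+r)} + C \int_0^t \| X(\sigma) - X_h(\sigma) \|_{\LOH}^2 \ds,
\end{align*}
and Gronwall's lemma closes the argument uniformly on $(0,T]$.

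The main obstacle is the stochastic error term $\int_0^t F_h(t-\sigma) g(X(\sigma)) \dWs$. The naive pointwise bound $\| F_h(t) v \| \lesssim h^{1+r} t^{-(1+r)/2} \| v \|$ carries a square singularity at $\sigma = t$ that is only borderline integrable; plugging it into the Itô isometry loses half a power and produces the suboptimal rate $h^{(1+r)/2}$. Extracting the full order $h^{1+r}$ requires the \emph{integrated} form of the deterministic semigroup error and the precise matching of its regularity index against the optimal spatial regularity available for $g(X(\cdot)) Q^{1/2}$. Balancing these two ingredients at the common exponent $r$ is what ultimately delivers the optimal rate stated in the theorem.
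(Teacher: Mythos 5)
Your overall architecture (subtract the mild formulations, isolate the deterministic $F_h$-contributions from the Lipschitz/Gronwall contributions, use BDG for the stochastic integrals) matches the paper, but the step you yourself identify as ``the main obstacle'' is not actually resolved by what you write, and this is a genuine gap. The inequality you invoke,
\begin{align*}
\Big( \int_0^t \| F_h(t-\sigma) \Phi(\sigma) \|_{L_2^0}^2 \ds \Big)^{1/2} \le C h^{1+r} \sup_{\sigma \in [0,t]} \| A^{r/2} \Phi(\sigma) \|_{L_2^0},
\end{align*}
is \emph{not} the integrated estimate the paper proves. Lemma \ref{lem:Fh2} \emph{(ii)} bounds $\int_0^t \| F_h(\sigma) x \|^2 \ds$ for a \emph{fixed} vector $x$; its proof goes through the energy identity $\int_0^t \|F_h(\sigma)x\|^2\ds \le \int_0^t \|(R_h-I)E(\sigma)x\|^2\ds$ and Lemma \ref{lem:1} \emph{(iii)}, both of which exploit that the same $x$ sits under the whole time integral. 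To get your version with a $\sigma$-dependent integrand and a supremum on the right, you would effectively need $\int_0^t \|F_h(s)\|^2_{\mathcal{L}(\dot H^r,H)}\, \mathrm{d}s \le Ch^{2(1+r)}$, and the available pointwise bounds $\|F_h(s)\|_{\mathcal{L}(\dot H^r,H)} \le C\min(h^r,\, h^{1+r}s^{-1/2})$ only yield $Ch^{2(1+r)}(1+\log(t/h^2))$ --- precisely the logarithmic defect of \cite{kovacs2010} that this theorem is designed to remove. The same objection applies to your treatment of the drift term $\int_0^t F_h(t-\sigma)f(X(\sigma))\ds$.

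The missing idea is the further splitting of the integrand along the solution path: write $g(X(\sigma)) = \big(g(X(\sigma)) - g(X(t))\big) + g(X(t))$ (and analogously for $f$). The frozen term $g(X(t))$ is constant in $\sigma$, so the fixed-vector integral estimate of Lemma \ref{lem:Fh2} applies verbatim and yields $Ch^{1+r}$ via \eqref{eq1:reg}. The increment term carries the factor $\|X(\sigma)-X(t)\|_{\LOH} \le C(t-\sigma)^{1/2}$ from the temporal H\"older regularity \eqref{eq1:hoelder}, which exactly compensates the non-integrable singularity $(t-\sigma)^{-1-r}$ (respectively $(t-\sigma)^{-1}$) of the pointwise $F_h$-bounds, leaving an integrable power for $r<1$. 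Without this decomposition the optimal rate is out of reach by these methods. A secondary inaccuracy: your Gronwall term for the drift assumes $\|E_h(t)P_h\|\le 1$ suffices, but under Assumption \ref{as:2} the differences $f(X(\sigma))-f(X_h(\sigma))$ live only in $\dot H^{-1+r}$, so one needs $\|E_h(t-\sigma)P_h y\| \le C(t-\sigma)^{-(1-r)/2}\|y\|_{-1+r}$ and the generalized Gronwall lemma with a weakly singular kernel, not the classical one.
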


Therefore, in our example of a standard finite element semidiscretization, the
approximation $X_h$ converges with order $1+r$ to the mild solution $X$. Since
this rate coincides with the spatial regularity of $X$ it is called optimal
(see \cite[Ch.~1]{thomee2006}). 

We stress that, to the best of our knowledge, in all articles which deal with
the numerical approximation of semilinear SPDEs the obtained order of
convergence is of the suboptimal form $1 + r - \epsilon$ for any $\epsilon > 0$
(see \cite{yan2005} or \cite{hausenblas2003}, where also stronger
Lipschitz assumptions have been imposed on $f$, $g$) or the error estimates
contain a logarithmic term of the form $\log(t/h)$ as in \cite{kovacs2010}. 

Next, we consider a spatio-temporal discretization of the
stochastic partial differential equation \eqref{eq1:SPDE}. Let $k \in (0,1]$
denote a fixed time step which defines a time grid $t_j = jk$, $j =
0,1,\ldots,N_k$, with $N_k k \le T < (N_k + 1) k$.  

Further, by $X_h^j$ we denote the
approximation of the mild solution $X$ to \eqref{eq1:mild} at time $t_j$.  A
combination of the Galerkin methods together with a linear 
implicit Euler-Maruyama scheme results in the recursion
\begin{align}
  \label{eq5:scheme}
  \begin{split}
    X_h^j - X_h^{j-1} + k \big( A_h X_h^j + P_h f(X_h^{j-1}) \big) &= P_h
    g(X_h^{j-1}) \Delta W^j, \quad \text{for } j = 1,\ldots,N_k, \\
    X_h^0 &= P_h X_0, 
  \end{split}
\end{align}
with the Wiener increments $\Delta W^j := W(t_j) - W(t_{j-1})$ which are
$\F_{t_j}$-adapted, $U$-valued random variables. Consequently, 
$X_h^j$ is an $\F_{t_j}$-adapted random variable which takes values in $S_h$.

Our second main result is the analogue of Theorem \ref{th:semi}.

\begin{theorem}
  \label{th5:full}
  Under the assumptions of Section \ref{sec:SPDE} with $r\in [0,1)$, $p \in [2,
  \infty)$, and Assumptions \ref{as:4} and \ref{as:5} there exists a constant
  $C$, independent of $k,h \in (0,1]$, such that
  \begin{align*}
    \| X_h^j - X(t_j) \|_{\LOH} \le C (h^{1+r} + k^{\frac{1}{2}} ), \quad
    \text{for all } j = 1,\ldots,N_k,
  \end{align*}
  where $X$ denotes the mild solution \eqref{eq1:mild} to \eqref{eq1:SPDE} and
  $X_h^j$ is given by \eqref{eq5:scheme}.
\end{theorem}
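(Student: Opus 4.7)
The plan is to insert the spatially semidiscrete solution $X_h(t_j)$ from \eqref{eq4:mildsemi} and use the triangle inequality
\begin{equation*}
  \| X_h^j - X(t_j) \|_{\LOH} \le \| X_h^j - X_h(t_j) \|_{\LOH} + \| X_h(t_j) - X(t_j) \|_{\LOH}.
\end{equation*}
The second term is already $O(h^{1+r})$ by Theorem \ref{th:semi}. It therefore suffices to show that the purely temporal error in $S_h$ satisfies $\| X_h^j - X_h(t_j) \|_{\LOH} \le C k^{1/2}$ with $C$ independent of $h$.

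To this end I would first recast \eqref{eq5:scheme} in discrete mild form. Writing $R_h := (I+kA_h)^{-1}$, which is the standard rational approximation of $E_h(k)$, iteration of \eqref{eq5:scheme} yields
\begin{equation*}
  X_h^j = R_h^{\,j} P_h X_0 - \sum_{i=1}^{j} \int_{t_{i-1}}^{t_i} R_h^{\,j-i+1} P_h f(X_h^{i-1}) \ds + \sum_{i=1}^{j} \int_{t_{i-1}}^{t_i} R_h^{\,j-i+1} P_h g(X_h^{i-1}) \dWs,
\end{equation*}
so that $e^j := X_h^j - X_h(t_j)$ decomposes into a deterministic initial-value term, a drift term, and a stochastic integral term. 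Each of these is in turn split into a \emph{semigroup error} (replacing $E_h(t_j-\sigma)$ by $R_h^{j-i+1}$ with the exact integrand) and a \emph{propagation error} (using Lipschitz continuity of $f$, $g$ to compare values at $X_h^{i-1}$ and $X_h(\sigma)$). The propagation error is bounded by $\|e^{i-1}\|_{\LOH}$ together with the temporal Hölder regularity $\| X_h(\sigma) - X_h(t_{i-1}) \|_{\LOH} \le C k^{1/2}$, which follows from the optimal regularity results quoted from \cite{jentzen2010b, kl2010a} applied uniformly in $h$.

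The heart of the proof is the semigroup part. Here I would invoke sharp \emph{integral} versions of the deterministic error estimate
\begin{equation*}
  \bigl\| (R_h^{\,j-i+1} - E_h(t_j-\sigma)) A_h^{-\gamma} P_h \bigr\|_{\mathcal{L}(H)} \le C (t_j - \sigma)^{-1+\gamma/2} k^{\gamma/2}
\end{equation*}
type, integrated over the intervals $[t_{i-1}, t_i]$, to produce the $k^{1/2}$ rate for the drift term without any logarithmic loss or $\epsilon$-deficit. For the stochastic sum I would first apply the Burkholder--Davis--Gundy inequality to reduce $L^p(\Omega)$ to an $L^2$ quadratic variation estimate in Hilbert--Schmidt norm, and then use the squared version of the same sharp semigroup estimate, combined with the Hilbert--Schmidt integrability of $g(X_h(\sigma)) Q^{1/2}$ granted by the hypotheses of Section~\ref{sec:SPDE}. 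Summing the resulting bounds in $i$, the semigroup contributions to $\|e^j\|_{\LOH}$ are $O(k^{1/2})$, whereas the propagation contributions produce a term of the form $C k \sum_{i=1}^{j} \| e^{i-1} \|_{\LOH}$ plus a similar term from the stochastic integral after BDG.

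A discrete Gronwall inequality then closes the estimate and yields $\|e^j\|_{\LOH} \le C k^{1/2}$ uniformly in $h$ and $j$. Combining with Theorem \ref{th:semi} gives the desired bound $C(h^{1+r} + k^{1/2})$. The main obstacle is the sharpness of the semigroup quadrature estimate: a routine application of standard Thomée-type bounds produces either a $\log(t_j/k)$ factor or an $\epsilon$-loss in the exponent, and avoiding this is precisely the reason Assumption \ref{as:5} together with the integral-norm sharpening announced in the abstract is needed.
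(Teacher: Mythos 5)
Your route --- inserting the semidiscrete solution $X_h(t_j)$ and writing $\| X_h^j - X(t_j) \|_{\LOH} \le \| X_h^j - X_h(t_j) \|_{\LOH} + \| X_h(t_j) - X(t_j) \|_{\LOH}$ --- is genuinely different from the paper's. The paper never pivots through $X_h(t_j)$: it compares $X_h^j$ directly with $X(t_j)$ via the discrete variation-of-constants formula \eqref{eq5:mild}, the piecewise-constant operator $E_{kh}$ and interpolant $X_{kh}$, and the combined error operator $F_{kh}(t) = E_{kh}(t)P_h - E(t)$, for which Lemmas \ref{lem:Fkh1} and \ref{lem:Fkh2} deliver the $h$- and $k$-rates simultaneously. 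Your splitting is classical and could in principle work, but as sketched it has two concrete gaps.

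First, the central step does not go through as stated. You propose to bound the ``semigroup error'' $(R_h^{\,j-i+1} - E_h(t_j-\sigma))P_h g(X_h(\sigma))$ (and the analogous drift term) by a sharp \emph{integral} version of the quadrature estimate applied ``with the exact integrand''. Those integral estimates (the analogues of Lemma \ref{lem:Fkh2}) hold for a \emph{fixed} vector $x$, because they rest on cancellation in $\int_0^{t}F(\sigma)x\,\mathrm{d}\sigma$ resp.\ $\int_0^t\|F(\sigma)x\|^2\mathrm{d}\sigma$; they cannot be applied when the argument depends on $\sigma$. The alternative, a pointwise operator bound of the type you display, fails for the stochastic term: squaring $(t_j-\sigma)^{-1+\gamma/2}k^{\gamma/2}$ gives a non-integrable singularity at $\sigma=t_j$ for the admissible range of $\gamma$ (limited by $g$ mapping only into $L^0_{2,r}$ with $r<1$). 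What is needed is the paper's three-way splitting: freeze the argument at $t_j$, apply the integral estimate to the frozen term, and control $g(X_h(\sigma))-g(X_h(t_j))$ by the temporal H\"older continuity together with a pointwise estimate whose singularity $(t_j-\sigma)^{-1-r}$ is exactly compensated by the H\"older factor $(t_j-\sigma)$ --- this is also precisely where $r<1$ enters. Your two-term decomposition (semigroup + propagation) omits this middle term, and without it the $k^{1/2}$ rate is not reachable by the tools you invoke.

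Second, you use the uniform-in-$h$ H\"older bound $\|X_h(\sigma)-X_h(t_{i-1})\|_{\LOH}\le Ck^{1/2}$ by citing the regularity results for the \emph{continuous} solution $X$; those references do not cover $X_h$, and proving the discrete analogue uniformly in $h$ (including $\sup_t\E\|A_h^{1/2}X_h(t)\|^p<\infty$, which is where Assumption \ref{as:5} would enter on your route) is real additional work. This particular gap is repairable cheaply --- by the triangle inequality through $X$, Theorem \ref{th:semi} and \eqref{eq1:hoelder} give $\|X_h(\sigma)-X_h(t_{i-1})\|_{\LOH}\le C(h^{1+r}+k^{1/2})$, which suffices for the target bound --- but that argument needs to be made explicit rather than attributed to the cited regularity results.
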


As above, we obtain the optimal order of convergence with respect to the
spatio-temporal discretization. Since we only use the information of 
the driving Wiener process which is provided by
the increments $\Delta W^j$, it is a well-known fact \cite{clark1980} that
the maximum order of convergence of the implicit Euler scheme is
$\frac{1}{2}$. It is possible to overcome this barrier if one considers a
Milstein-like scheme as discussed in the recent paper
\cite{jentzen2010a}.

In our error analysis we use the results from \cite[Ch.\ 2 and 3]{thomee2006}
which are stated under the assumption that $-A$ is the Laplace 
operator with homogeneous Dirichlet boundary conditions. But all proofs and
techniques also hold in our more general framework of a self-adjoint, positive
definite operator $A$. Further, we use generic constants which may vary at
each appearance but are always independent of the discretization parameters $h$
and $k$.

The structure of this paper is as follows: In the next section we introduce some
additional notations and formulate the assumptions on $f$, $g$ and $X_0$ which
will be sufficient for the existence of the unique mild solution $X$ to
\eqref{eq1:SPDE} as well as for the proofs of Theorems \ref{th:semi} and
\ref{th5:full}. In Section \ref{sec:mainresult} we give a short review of
Galerkin finite element methods and we also introduce two additional
assumptions on the choice of the family of subspaces $(S_h)_{h \in (0,1]}$. As
already mentioned, Section \ref{sec:mainresult} also contains two concrete
examples of a spatial discretization.

In Section \ref{sec:galerkin} we present several lemmas which play a crucial
role in the proofs of our main results. All lemmas are concerned
with the spatially semidiscrete and fully discrete approximation of the
deterministic 
homogeneous equation \eqref{eq3:homeq}. We prove extensions of well-known
convergence results from \cite{thomee2006} to non-smooth initial data as well
as sharp integral versions.

While Sections \ref{sec:semi} and \ref{sec:full} are devoted to the proofs of
Theorems \ref{th:semi} and \ref{th5:full}, respectively, the final section
revisits the special case of SPDEs with additive noise. 

\sect{Preliminaries}
\label{sec:SPDE}

In order to formulate our assumptions on $f$, $g$ and $X_0$ we introduce the
notion of fractional 
powers of the linear operator $A$ in the same way as in
\cite{printems2001,kl2010a}. After fixing some additional notation we
give a precise formulation of our assumptions and cite the result on the
existence of a unique mild solution to \eqref{eq1:SPDE} from
\cite{jentzen2010b} and a corresponding regularity result from \cite{kl2010a}.

For any $r \in \R$ the operator $A^{\frac{r}{2}} : D(A^{\frac{r}{2}}) \to H$ is
given by  
\begin{align*}
  A^{\frac{r}{2}} x = \sum_{n = 1}^\infty \lambda_n^{\frac{r}{2}} x_n e_n 
\end{align*}
for all 
\begin{align*}
  x \in D(A^{\frac{r}{2}}) = \Big\{ x = \sum_{n = 1}^\infty x_n e_n \, : \,
  (x_n)_{n \ge 1} \subset \R \text{ with } \| x \|_{r}^2 := \sum_{n=1}^\infty
  \lambda_n^{r} x_n^2 < \infty \Big\}.
\end{align*}
By defining $\dot{H}^r := D( A^{\frac{r}{2}} )$ together with the norm
$\| \cdot \|_r$ for $r \in \R$, $\dot{H}^r$ becomes a
Hilbert space. Note that we have $\| x \|_{r} = \| A^{\frac{r}{2}} x \|$ for
all $r \in \R$ and $x \in \dot{H}^{r}$.

As usual \cite{daprato1992,roeckner2007} we introduce the separable Hilbert
space $U_0 := Q^{\frac{1}{2}}(U)$ with the inner product $(u_0,v_0)_{U_0} :=
(Q^{-\frac{1}{2}} u_0, Q^{-\frac{1}{2}} v_0)_U$. Here $Q^{-\frac{1}{2}}$
denotes the pseudoinverse. Further, the set $L^0_2$ denotes the space of all
Hilbert-Schmidt operators $\Phi \colon U_0 \to H$ with norm
\begin{align*}
  \| \Phi \|_{L^0_2}^2 := \sum_{m = 1}^\infty \| \Phi \psi_m \|^2,
\end{align*}
where $(\psi_m)_{m \ge 1}$ is an arbitrary orthonormal basis of $U_0$ (for
details see, for example, Proposition $2.3.4$ in \cite{roeckner2007}). 
We also introduce the subset $L_{2,r}^0 \subset L_2^0$, $r \ge 0$, which
is the subspace of all Hilbert-Schmidt operators $\Phi \colon U_0 \to
\dot{H}^r$ together with the norm $\| \Phi \|_{L^0_{2,r}} := \| A^{\frac{r}{2}}
\Phi \|_{L^0_2}$. 

Let $r \in [0,1]$, $p \in [2, \infty)$ be given. As in
\cite{jentzen2010b,kl2010a,printems2001} we impose the following 
conditions on $f$, $g$ and $X_0$.

\begin{assumption}
  \label{as:2}
  The nonlinear operator $f$ maps $H$ into $\dot{H}^{-1+r}$ and there
  exists a constant $C$ such that
  \begin{align*}
    \| f(x) - f(y) \|_{-1+r} \le C \| x - y \| \quad \text{ for all } x,y \in H.
  \end{align*}
\end{assumption}

\begin{assumption}
  \label{as:1}
  The nonlinear operator $g$ maps $H$ into $L^0_2$ and 
  there exists a constant $C$ such that
  \begin{align}
    \| g(x) - g(y) \|_{L^0_2} \le C \| x - y \| \quad \forall x,y \in H.    
    \label{eq1:G_lip}
  \end{align}
  Furthermore, we have that $g(\dot{H}^r) \subset L_{2,r}^0$ and 
  \begin{align}
    \| g(x) \|_{L_{2,r}^0} \le C \left( 1 + \| x \|_{r} \right) \quad \text{
    for all } x \in \dot{H}^r.
    \label{eq1:G_lin}
  \end{align}
\end{assumption}

\begin{assumption}
  \label{as:3}
  The initial
  value $X_0 : \Omega \to \dot{H}^{r+1}$ is an $\F_0$-measurable random
  variable with $\E \left[ \| X_0 \|^p_{r+1}\right] <\infty$.  
\end{assumption}

Under these conditions, by \cite[Theorem 1]{jentzen2010b}, there exists an
up to modification unique mild solution 
$X \colon [0,T] \times \Omega \to H$ to \eqref{eq1:SPDE} of
the form \eqref{eq1:mild}. Furthermore, by the regularity results
in \cite{kl2010a}, it holds true that for all $s \in [0,r+1]$ with $r \in
[0,1)$, we have
\begin{align}
  \label{eq1:reg}
  \sup_{t \in [0,T]} \E\left[ \| X(t) \|^p_{s} \right]  < \infty
\end{align}
and there exists a constant $C$ such that
\begin{align}
  \label{eq1:hoelder}
  \big(\E \left[ \| X(t_1) -
  X(t_2) \|_{s}^p\right] \big)^{\frac{1}{p}} \le C |t_1
  -t_2|^{\min(\frac{1}{2},\frac{r+1 - s}{2})}
\end{align}
for all $t_1,t_2 \in [0,T]$. 

\begin{remarks}
  1.) Of course, Assumption \ref{as:2} is satisfied under the usual condition
  that the mapping $f$ is Lipschitz continuous from $H$ to $H$. 
  The reason for our slightly weaker assumption is that under this condition
  the order of the spatial discretization error will numerically behave in the
  same way for both integrals, the Lebesgue integral which contains $f$ and the
  stochastic integral which contains $g$. 

  In addition, Assumption \ref{as:2} applies to partial
  differential equations where a fractional power of the operator $A$ is
  situated in front of a Lipschitz continuous mapping $\tilde{f} \colon H \to
  H$ as, for example, in the Cahn-Hilliard equation. 

  2.) Assumption \ref{as:3} can be relaxed to $X_0 \colon \Omega \to H$
      being an 
      $\F_0$-measurable random variable with $\E\left[ \| X_0 \|^p \right] <
      \infty$. But, 
      as with deterministic PDEs, this will lead to a 
      singularity at $t=0$ in the error estimates. 
\end{remarks}

We complete this section by collecting useful facts on the semigroup $E(t)$.
The smoothing property \eqref{eq1:smoothE} and Lemma \ref{lem:1} \emph{(ii)}
are classical results and proofs can be found in \cite{pazy1983}. A proof for
the remaining assertions is given in \cite{kl2010a}.

\begin{lemma}
  \label{lem:1}
  For the analytic semigroup $E(t)$ the following properties hold true:

  (i) For any $\nu \ge 0$ there exists a constant $C = C(\nu)$ such that
  \begin{align}
    \label{eq1:smoothE}
    \| A^\nu E(t) \| \le C t^{-\nu} \text{ for all } t > 0.
  \end{align}

  (ii) For any $0\le \nu \le 1$ there exists a constant $C = C(\nu)$ such
  that
  \begin{align*}
    \| A^{-\nu}(E(t) - I) \| \le C t^\nu \text{ for all } t \ge 0.
  \end{align*}

  (iii) For any $0 \le \nu \le 1$ there exists a constant $C = C(\nu)$ such
  that
  \begin{align*}
    \int_{\tau_1}^{\tau_2} \big\| A^{\frac{\nu}{2}} E(\tau_2 - \sigma) x
    \big\|^2 \ds \le C (\tau_2 - \tau_1)^{1-\nu} \left\| x 
    \right\|^2 \text{ for all $x \in H$,
    $0 \le \tau_1 < \tau_2$}.
  \end{align*}

  (iv) For any $0 \le \nu \le 1$ there exists a constant $C = C(\nu)$ such
  that
  \begin{align*}
    \Big\|A^{\nu} \int_{\tau_1}^{\tau_2} E(\tau_2 - \sigma) x \ds
    \Big\| \le C (\tau_2 - \tau_1)^{1-\nu} \| x \| \text{ for all $x
    \in H$, $0 \le \tau_1 < \tau_2$}.
  \end{align*}
\end{lemma}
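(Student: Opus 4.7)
The strategy is to exploit the spectral decomposition of $A$ to reduce each of the four assertions to an elementary scalar inequality. Writing $x = \sum_n x_n e_n$ with $x_n = (x,e_n)$, one has $A^\alpha E(t) x = \sum_n \lambda_n^\alpha \ee^{-\lambda_n t} x_n e_n$, so that
\[
\| A^\alpha E(t) x \|^2 = \sum_{n \ge 1} \lambda_n^{2\alpha} \ee^{-2\lambda_n t} x_n^2
\]
whenever the series converges. All four bounds will follow by extracting from this representation an appropriate supremum over $n$ (for operator-norm bounds) or by integrating termwise in $\sigma$ (for the integral bounds).

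For (i) I would invoke the calculus fact $\sup_{\mu \ge 0} \mu^\nu \ee^{-\mu} = \nu^\nu \ee^{-\nu} < \infty$; setting $\mu = \lambda_n t$ yields $\lambda_n^\nu \ee^{-\lambda_n t} \le C(\nu) t^{-\nu}$ uniformly in $n$, which bounds $\|A^\nu E(t)\|$ via the displayed identity. For (ii) I would use the elementary inequality $1 - \ee^{-\mu} \le \mu^\nu$ valid for $\mu \ge 0$ and $0 \le \nu \le 1$ (since $1 - \ee^{-\mu} \le \min(\mu,1) \le \mu^\nu$); applied with $\mu = \lambda_n t$ this gives $\lambda_n^{-\nu}|\ee^{-\lambda_n t} - 1| \le t^\nu$ uniformly in $n$, which is the stated bound.

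For (iii) I would integrate termwise to obtain
\[
\int_{\tau_1}^{\tau_2} \| A^{\nu/2} E(\tau_2 - \sigma) x \|^2 \ds = \sum_{n \ge 1} x_n^2 \cdot \frac{\lambda_n^{\nu-1}}{2} \bigl( 1 - \ee^{-2 \lambda_n (\tau_2 - \tau_1)} \bigr)
\]
(the interchange of sum and integral being justified by Tonelli), and then control the $\lambda_n$-factor with the same scalar inequality used for (ii), this time with exponent $1 - \nu \in [0,1]$, giving $\lambda_n^{\nu-1}(1 - \ee^{-2\lambda_n(\tau_2-\tau_1)}) \le 2^{1-\nu}(\tau_2-\tau_1)^{1-\nu}$; summation in $n$ then produces the factor $\|x\|^2$. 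For (iv) the key observation is the identity
\[
A \int_{\tau_1}^{\tau_2} E(\tau_2 - \sigma) x \ds = \bigl( I - E(\tau_2 - \tau_1) \bigr) x,
\]
obtained from $\tfrac{\mathrm{d}}{\mathrm{d}\sigma} E(\tau_2 - \sigma) = A E(\tau_2 - \sigma)$ and the fundamental theorem of calculus (or directly from the spectral representation). Hence $A^\nu \int_{\tau_1}^{\tau_2} E(\tau_2 - \sigma) x \ds = A^{-(1-\nu)}(I - E(\tau_2 - \tau_1)) x$, and the claim reduces to applying (ii) with exponent $1-\nu \in [0,1]$.

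No serious obstacle is anticipated: the only care is to verify the scalar inequalities at both endpoints of the allowed exponent range, which is routine, and to justify the termwise operations in (iii) and (iv), which follow from Tonelli's theorem and the absolute convergence of the spectral expansion for $x \in H$. The strict positivity $\lambda_1 > 0$ ensures there is nothing to check at the low-frequency end.
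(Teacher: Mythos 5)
Your proof is correct. Note, however, that the paper does not actually prove Lemma \ref{lem:1} at all: it cites \cite{pazy1983} for \eqref{eq1:smoothE} and part \emph{(ii)}, and \cite{kl2010a} for \emph{(iii)} and \emph{(iv)}. The references establish these facts by general semigroup-theoretic arguments (functional calculus for sectorial operators, resolvent representations), which apply to any analytic semigroup. Your route is different and more elementary: you exploit the standing assumption that $A$ is self-adjoint, positive definite with compact inverse, so that everything reduces via the eigenbasis $(e_n)$ to the two scalar inequalities $\sup_{\mu\ge 0}\mu^\nu \ee^{-\mu}<\infty$ and $1-\ee^{-\mu}\le \mu^\nu$ for $\nu\in[0,1]$. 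All four reductions check out — in particular the termwise integration in \emph{(iii)} is licensed by Tonelli since every term is nonnegative, and the identity $A\int_{\tau_1}^{\tau_2}E(\tau_2-\sigma)x\ds=(I-E(\tau_2-\tau_1))x$ used in \emph{(iv)} is the standard semigroup fact (or is immediate from the spectral expansion), after which \emph{(iv)} is exactly \emph{(ii)} with exponent $1-\nu$. What your approach buys is a short, fully self-contained proof within the paper's setting; what it gives up is generality, since the spectral expansion is unavailable for non-self-adjoint generators, whereas the cited proofs are not tied to a discrete orthonormal eigenbasis.
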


\sect{A short review of Galerkin finite element methods}
\label{sec:mainresult}
In this section we first review the the Galerkin finite element methods used
for the  
discretization of the Hilbert space $H$. Following \cite[Ch.\ 2 and
3]{thomee2006} we recall the definition of several discrete operators 
which are connected to a sequence of finite dimensional subspaces of
$\dot{H}^1$. 
%Then we introduce the numerical schemes and formulate the main
%results of this article. 
We close this section with two concrete examples,
namely the standard finite element method and a spectral Galerkin method.

Let $\left( S_h \right)_{h \in (0,1]}$ be a sequence of finite dimensional
subspaces of $\dot{H}^1$ and denote by $R_h \colon \dot{H}^1 \to S_h$ the
orthogonal projector (or Ritz projector) onto $S_h$ with respect to the inner
product 
\begin{align*}
  a(x,y) := \big( A^{\frac{1}{2}} x, A^{\frac{1}{2}} y \big), \quad \text{ for
  } x,y \in \dot{H}^1.
\end{align*}
Thus, we have
\begin{align*}
  a(R_h x, y_h) = a(x,y_h) \text{ for all } x \in \dot{H}^1,\; y_h \in S_h.
\end{align*}

Throughout this paper we assume that the spaces $(S_h)_{h \in (0,1]}$, satisfy
the following approximation property. Below we present two
examples of $A$, $H$ and spaces
$(S_h)_{h\in(0,1]}$ which fulfill this assumption. 
\begin{assumption}
  \label{as:4}
  Let a sequence $(S_h)_{h \in (0,1]}$ of finite dimensional subspaces of
  $\dot{H}^1$ 
  be given such that there exists a constant $C$ with 
  \begin{align}
    \label{eq3:Rh}
    \left\| R_h x - x \right\| \le C h^s \| x \|_{s} \text{ for all } x \in
    \dot{H}^s, \; s\in\{1,2\}, \; h \in(0,1].
  \end{align}
\end{assumption}

\begin{remark}
  Following \cite[Ch.\ 5.2]{larsson2003} or \cite[Ch.\ 1]{thomee2006} consider
  the linear (elliptic) problem to find $x \in D(A) =
  \dot{H}^2$ such that $Ax = z$ holds for a given $z \in H$. The \emph{weak} or
  \emph{variational} formulation of this problem is: Find $x
  \in \dot{H}^1$ which satisfies
  \begin{align}
    \label{eq3:elliptic}
    a(x,y) = (z,y) \quad \text{for all } y \in \dot{H}^1.
  \end{align}
  For a given sequence of finite dimensional subspaces $(S_h)_{h \in (0,1]}$
  the Galerkin approximation $x_h \in S_h$ of the weak solution $x$ is given by
  the relationship
  \begin{align}
    \label{eq3:elliptich}
    a(x_h,y_h) = (z,y_h) \quad \text{for all } y_h \in S_h.
  \end{align}
  Note that by the representation theorem $x \in \dot{H}^1$ and $x_h \in S_h$
  are uniquely determined by \eqref{eq3:elliptic} and
  \eqref{eq3:elliptich}. By the definition of the Ritz projector and since
  \eqref{eq3:elliptic} holds for all $y_h \in S_h$ we get 
  \begin{align*}
    a(R_h x, y_h) = a(x,y_h) = a(x_h,y_h) \quad \text{for all } y_h \in S_h.
  \end{align*}
  This yields $R_h x = x_h$, that is, the Ritz projection $R_h x$ coincides
  with the Galerkin approximation of the solution $x$ to the elliptic problem.
  Hence, Assumption \ref{as:4} 
  is a statement about the order of convergence of the sequence
  $(x_h)_{h \in (0,1]}$ to $x$.
\end{remark}

Next, we introduce the
mapping $A_h \colon S_h \to S_h$, which is a discrete version of the operator
$A$. For $x_h \in S_h$ we define $A_h x_h$  to be the unique element in
$S_h$ which satisfies the relationship
\begin{align}
  \label{eq3:A_h}
  a(x_h,y_h) = (A_h x_h, y_h) \text{ for all } y_h \in S_h.
\end{align}
Since
\begin{align*}
 \left( A_h x_h,y_h \right) = a(x_h,y_h) = \left( x_h, A_h y_h \right) 
 \text{ for all } x_h,y_h \in S_h,
\end{align*}
as well as 
\begin{align*}
  \left( A_h x_h, x_h \right) = a( x_h,x_h ) = \| x_h \|_{1}^2 > 0
  \quad \text{for all } x_h \in S_h, \; x_h \neq 0,
\end{align*}
the operator $A_h$ is self-adjoint and positive definite on $S_h$.
Hence, $-A_h$ is the generator of an analytic semigroup of contractions on
$S_h$, which
is denoted by $E_h(t) = \ee^{-A_h t}$. Let $\rho \ge 0$. Similar to \cite[Lemma
$3.9$]{thomee2006} one shows the smoothing property 
\begin{align}
  \label{eq3:smoothEh}
  \left\| A_h^{\rho} E_h(t) y_h \right\| \le C t^{-\rho} \left\| y_h \right\|
  \quad \text{for all } t > 0,  
\end{align}
where $C = C(\rho)$ is independent of $h\in (0,1]$.
Additionally, by the definition of $A_h$, it holds true that
\begin{align}
  \| A_h^{\frac{1}{2}} y_h \|^2 = a(y_h,y_h) = \| A^{\frac{1}{2}} y_h \|^2 = \|
  y_h \|_{1}^2 \quad \text{for all } y_h \in S_h \subset \dot{H}^1.
  \label{eq3:normAh}
\end{align}

Finally, let $P_h: \dot{H}^{-1} \to S_h$ be the generalized orthogonal
projector onto $S_h$ (see also \cite{chrysafinos2002}) defined by 
\begin{align*}
  (P_h x, y_h) = \langle x , y_h \rangle \quad \text{for all } x \in
  \dot{H}^{-1},\; y_h \in S_h, 
\end{align*}
where $\langle \cdot , \cdot \rangle = a\big(A^{-1} \cdot,\cdot \big)$ denotes
the duality pairing between $\dot{H}^{-1}$ and $\dot{H}^1$. By the
representation theorem, $P_h$ is well-defined and, when restricted to $H$, 
coincides with the standard orthogonal projector with 
respect to the $H$-inner product. 

These operators are related as follows:
\begin{align}
  A_h^{-1} P_h x = R_h A^{-1} x \quad \text{for all } x \in \dot{H}^{-1}
  \label{eq3:rel}
\end{align}
since 
\begin{align*}
  a( R_h A^{-1} x , y_h ) = a( A^{-1} x , y_h) = 
  \langle x , y_h \rangle = (P_h x ,y_h) = a(A_h^{-1} P_h x, y_h)
\end{align*}
for all $x \in \dot{H}^{-1}$, $y_h \in S_h$. Furthermore, it holds that
\begin{align}
  \label{eq3:discnorm}
  \begin{split}
    \| A_h^{-\frac{1}{2}} P_h x \| &= \sup_{z_h \in S_h}
    \frac{\big|(A_h^{-\frac{1}{2}}P_h x,z_h) \big|}{\| z_h \|} =
    \sup_{z_h \in S_h} \frac{\big|( P_h x,A_h^{-\frac{1}{2}}  z_h) \big|}{\|
    z_h \|} \\
    &= \sup_{z_h' \in S_h} \frac{\big|\langle x,z_h' \rangle
    \big|}{\|A_h^{\frac{1}{2}} z_h' \|} \le \sup_{z_h' \in S_h}
    \frac{\|x \|_{-1} \| z_h' \|_{1}}{ \|A_h^{\frac{1}{2}} z_h' \| } =  \|x
    \|_{-1}, 
  \end{split}
\end{align}
for all $x\in \dot{H}^{-1}$, where the last equality is due to
\eqref{eq3:normAh}. Having established this we also prove the following
consequence of \eqref{eq3:smoothEh}
\begin{align}
  \label{eq3:smoothEh2}
  \big\| E_h(t) P_h x \big\| = \big\| A_h^{\frac{1}{2}} E_h(t)
  A_h^{-\frac{1}{2}} P_h x \big\| \le C t^{-\frac{1}{2}} \big\|
  A_h^{-\frac{1}{2}} P_h x \big\| \le C t^{-\frac{1}{2}} \big\| x \big\|_{-1}
\end{align}
for all $x \in \dot{H}^{-1}$, $t > 0$ and $h \in (0,1]$.

The following 
assumption, which is concerned with the stability of the projector $P_h$ with
respect to the norm $\| \cdot \|_{1}$, will mainly be needed for the proof of
Lemma \ref{lem:Fkh2} \emph{(ii)} below and, consequently, also for the proof of
Theorem \ref{th5:full}.

\begin{assumption}
  \label{as:5}
  Let a family $(S_h)_{h \in (0,1]}$ of finite dimensional subspaces of
  $\dot{H}^1$ be given such that there exists a constant $C$ with
  \begin{align}
    \| P_h x \|_{1} \le C \| x \|_{1} \quad \text{ for all } x \in \dot{H}^1,
    \; h \in (0, 1].
    \label{eq3:stabPh}
  \end{align}  
\end{assumption}

We conclude this section with two examples which satisfy Assumptions \ref{as:4}
and \ref{as:5}.

\begin{example}[Standard finite element method]
  \label{ex:1}
  Assume that $H = L^2(\D)$, where $\D \subset \R^d$, $d=1,2,3$, is a bounded,
  convex domain (a polygon if $d=2$ or a polyhedron if $d =3$). 
  Let the operator $A$ be given by $Au = -\nabla \cdot ( a(x) \nabla u) + c(x)
  u$ with $c(x) \ge 0$ and $a(x) \ge a_0 > 0$ for $x\in D$ 
  with Dirichlet boundary conditions. In this case it 
  is well-known (for example, \cite[Theorem $6.4$]{larsson2003} and \cite[Ch.\
  3]{thomee2006}) that $\dot{H}^1 = H_0^1(\D)$ and $\dot{H}^2 = H^2(\D) \cap
  H^1_0(\D)$, where $H^k(\D)$, $k \ge 0$, denotes the Sobolev space of order
  $k$ and $H_0^1(\D)$ consists of all functions in $H^1(\D)$ which are zero on
  the boundary. Furthermore, the norms in $H^k(\D)$ and $\dot{H}^k$
  are equivalent in $\dot{H}^k$ for $k=1,2$ (see \cite[Lemma 3.1]{thomee2006}).

  Let $(\mathcal{T}_h)_{h\in(0,1]}$ denote a regular family of partitions of
  $\D$ 
  into  simplices, where $h$ is the maximal meshsize. We define $S_h$ to be the
  space of all continuous functions $y_h \colon \bar{\D} \to \R$, which are
  piecewise linear 
  on $\mathcal{T}_h$ and zero on the boundary $\partial \D$.
  Then $S_h \subset \dot{H}^1$ 
  and Assumption \ref{as:4} holds by \cite[Lemma $1.1$]{thomee2006} or
  \cite[Theorem $5.4.8$]{brenner2008}.  

  Further, if the family $(\mathcal{T}_h)_{h\in(0,1]}$ is quasi-uniform
  then also Assumption \ref{as:5} is satisfied. But for a more detailed
  discussion of Assumption \ref{as:5} in the context of the finite element
  method we refer to \cite{carstensen2002, carstensen2004, crouzeix1987}.
\end{example}

\begin{example}[Spectral Galerkin method]
  \label{ex:2}
  In the same situation as in Example \ref{ex:1} we further assume that $\D =
  (0,1) \subset \R$ and $-A$ is the Laplace operator with homogeneous Dirichlet
  boundary conditions. In this situation the orthonormal eigenfunctions and
  eigenvalues of the Laplace operator are explicitly known to be
  \begin{align*}
    \lambda_k = k^2\pi^2 \text{ and } e_k(y) = \sqrt{2} \sin(k\pi y) \text{ for
    all } k \in \N, k \ge 1, y \in \D.
  \end{align*}
  For $N \in \N$ set $h := \lambda_{N+1}^{-\frac{1}{2}}$ and define
  \begin{align*}
    S_h := \mathrm{span} \big\{ e_k : k = 1,\ldots,N \big\}.
  \end{align*}
  Note that $S_h \subset \dot{H}^r$ for every $r \in \R$. For $x \in \dot{H}^1$
  we represent the Ritz projection $R_h x \in S_h$ in terms of the basis
  $(e_k)_{k=1}^N$. This yields 
  $R_h x = \sum_{k = 1}^N x_{k}^h e_k$,
  where the coefficients $(x^h_{k})_{k = 1}^N$ are given by 
  \begin{align*}
    x_k^h = (R_h x, e_k) = \frac{1}{\lambda_k} (R_h x, A e_k) =
    \frac{1}{\lambda_k} a( R_h x, e_k) =  \frac{1}{\lambda_k} a( x, e_k) =
    (x,e_k).   
  \end{align*}
  Hence, in this example, the Ritz projector $R_h$ is the restriction of the
  orthogonal $L^2$-projector $P_h$ to $\dot{H}^1$. Moreover, we have
  \begin{align*}
    \big\| (I-R_h) x \big\|^2 &= \big\| (I - P_h) x \big\|^2 = 
    \sum_{k=N+1}^\infty (x,e_k)^2 = \sum_{k=N+1}^\infty \lambda_k^{-\rho}
    (x,A^{\frac{\rho}{2}}e_k)^2   \\
    &\le \lambda_{N+1}^{-\rho} \sum_{k=N+1}^\infty (
    A^{\frac{\rho}{2}}x,e_k)^2 = h^{2\rho} \|x \|_{\rho}^2 \text{ for all } x
    \in \dot{H}^\rho,\; \rho =1,2,
  \end{align*}
  since $\lambda_k^{-1} \le \lambda_{N+1}^{-1} = h^{2}$ for all $k \ge
  N+1$. Therefore, Assumption \ref{as:4} is satisfied for the spectral Galerkin
  method.  

  That Assumption \ref{as:5} holds is easily seen by
  \begin{align*}
    \big\| P_h x \big\|_{1}^2 = \Big\| A^{\frac{1}{2}} \sum_{k = 1}^N (x, e_k)
    e_k \Big\|^2 = \sum_{k=1}^N \big( A^{\frac{1}{2}} x, e_k)^2 \le \big\| x
    \big\|^2_{1} \quad \text{ for all } x \in \dot{H}^{1}.
  \end{align*}
  A detailed presentation of spectral Galerkin methods is found in
  \cite{hesthaven2007}. 
\end{example}

\sect{Error estimates of Galerkin methods for deterministic equations} 
\label{sec:galerkin}
This section extends error estimates from \cite{thomee2006} for the
discretization of the 
deterministic linear homogeneous equation 
\begin{align}
  \label{eq3:homeq}
  \frac{\mathrm{d}}{\mathrm{d}t}u(t) + A u(t) = 0,&\quad t>0, \quad \text{with
  } u(0) = x, 
\end{align}
to non-smooth initial data $x \in \dot{H}^{-1}$. We will also present suitable
integral version of these estimates which are crucial for the derivation of the
optimal error estimates.

\subsection{Error estimates for a spatially semidiscrete approximation}
\label{subsec:1}

The following two lemmas provide some useful estimates of the operator
$F_h$, which is given by $F_h(t) := E_h(t)P_h - E(t)$, $t \ge 0$. Note that
$F_h(t)x$ can be seen as the error at time $t \ge 0$ between the weak
solution $u$ to \eqref{eq3:homeq} and $u_h$ which solves the spatially
semidiscrete equation 
\begin{align*}
  \frac{\mathrm{d}}{\mathrm{d}t}u_{h}(t) + A_h u_h(t) = 0, & \quad t > 0, \quad
  \text{with } 
  u_h(0) = P_h x,
\end{align*}
for $x \in \dot{H}^{-1}$.

\begin{lemma}
  \label{lem:Fh1}
  Under the Assumption \ref{as:4} 
  the following estimates hold for the error operator $F_h$. 

  (i) Let $0 \le \nu \le \mu \le 2$. Then there exists a
  constant $C$ such that
  \begin{align*}
    \| F_h(t) x \| \le C h^\mu t^{-\frac{\mu - \nu}{2}} \| x \|_\nu \text{ for
    all } x \in \dot{H}^\nu,\; t > 0, \; h \in (0,1].
  \end{align*}

  (ii) Let $0 \le \rho \le 1$. Then there exists a constant $C$ such that
  \begin{align*}
    \| F_h(t) x \| \le C t^{-\frac{\rho}{2}} \| x \|_{-\rho} \text{ for all } x
    \in \dot{H}^{-\rho},\; t > 0, \; h \in (0,1].
  \end{align*}

  (iii) Let $0 \le \rho \le 1$. Then there exists a constant $C$ such that
  \begin{align*}
    \| F_h(t) x \| \le C h^{2-\rho} t^{-1} \| x \|_{-\rho} \text{ for all } x
    \in \dot{H}^{-\rho},\; t > 0, \; h \in (0,1].
  \end{align*}
\end{lemma}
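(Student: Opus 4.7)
Part (i) collects the classical non-smooth and smooth data error estimates of \cite[Ch.~3]{thomee2006} for the semidiscrete parabolic problem. The two endpoints $(\nu,\mu) = (0,2)$ and $(\nu,\mu) = (2,2)$ follow there from Assumption~\ref{as:4} via an energy/Ritz-projection argument, and all intermediate $(\nu,\mu)$ are obtained by interpolation. I would simply invoke the reference.

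For part (ii) I use the triangle inequality $\|F_h(t) x\| \le \|E(t) x\| + \|E_h(t) P_h x\|$. Rewriting $E(t) x = A^{\rho/2} E(t)(A^{-\rho/2} x)$ and invoking Lemma~\ref{lem:1}~(i) gives $\|E(t) x\| \le C t^{-\rho/2}\|x\|_{-\rho}$. The semidiscrete piece is treated by the same trick as in \eqref{eq3:smoothEh2}: $\|E_h(t) P_h x\| \le C t^{-\rho/2}\|A_h^{-\rho/2} P_h x\|$, and interpolating the trivial bound $\|P_h x\|\le\|x\|$ with \eqref{eq3:discnorm} yields $\|A_h^{-\rho/2} P_h x\| \le C\|x\|_{-\rho}$ uniformly in $h \in (0,1]$.

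For part (iii) I would exploit the self-adjointness of $F_h(t)$: the self-adjointness of $A$, of $A_h$ on $S_h$, and of $P_h$ on $H$ imply $(F_h(t) x, y) = (x, F_h(t) y)$ for $x, y \in H$, which extends to $x \in \dot{H}^{-\rho}$, $y \in H$ through the $\dot{H}^{-\rho}$--$\dot{H}^\rho$ duality pairing (note $F_h(t) y \in \dot{H}^1 \subset \dot{H}^\rho$ for $\rho \le 1$). Hence
\begin{align*}
\|F_h(t) x\| \le \|x\|_{-\rho}\sup_{\|y\|\le 1}\|F_h(t) y\|_\rho,
\end{align*}
so it suffices to show $\|F_h(t) y\|_\rho \le C h^{2-\rho} t^{-1}\|y\|$ for $y \in H$ and $\rho \in [0,1]$. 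The case $\rho = 0$ is Part~(i) with $(\nu,\mu)=(0,2)$. For $\rho = 1$ I would split $F_h(t) y = E_h(t/2) P_h F_h(t/2) y + F_h(t/2) E(t/2) y$, estimating the first summand in the $\dot{H}^1$-norm via \eqref{eq3:normAh}, the discrete smoothing \eqref{eq3:smoothEh}, and Part~(i) with $(\nu,\mu)=(0,1)$, and the second summand---whose input $z := E(t/2)y$ lies in $\dot{H}^2$ with $\|z\|_2 \le Ct^{-1}\|y\|$---by a further Ritz decomposition combined with the standard $\dot{H}^1$-Ritz error bound $\|(I-R_h) z\|_1 \le C h\|z\|_2$ and a Duhamel argument using the identity $A_h R_h = P_h A$. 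Intermediate $\rho \in (0,1)$ then follows by interpolation.

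The main obstacle will be the $\rho = 1$ endpoint: squeezing out the \emph{sharp} bound $Cht^{-1}\|y\|$ rather than the naive $Ct^{-1/2}\|y\|$ requires careful matching of the time-smoothing factors coming from $E$ and $E_h$, the Ritz identity $A_h R_h = P_h A$, and the norm equivalence \eqref{eq3:normAh} on $S_h$. Everything else reduces to routine duality and interpolation built on top of Thomée's classical estimates.
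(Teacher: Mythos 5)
Parts \emph{(i)} and \emph{(ii)} of your proposal coincide with the paper's proof: \emph{(i)} is cited from Thom\'ee, and \emph{(ii)} is exactly the triangle-inequality argument combining \eqref{eq3:smoothE-1} with \eqref{eq3:smoothEh2} (whether one interpolates the bound $\| A_h^{-\rho/2} P_h x\| \le C\|x\|_{-\rho}$ first or interpolates the final estimate between $\rho=0$ and $\rho=1$ is immaterial). For \emph{(iii)} you take a genuinely different route. The paper writes $F_h(t)x = A_hE_h(t)A_h^{-1}P_hx - AE(t)A^{-1}x$, inserts the identity \eqref{eq3:rel} $A_h^{-1}P_h = R_hA^{-1}$ to peel off a Ritz-error term $A_hE_h(t)P_h(R_h-I)A^{-1}x$, and identifies the remainder as $\tfrac{\mathrm{d}F_h}{\mathrm{d}t}(t)A^{-1}x$, which is handled by a (modified) Thom\'ee Theorem~3.4. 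This keeps everything in the $H$-norm and never asks $P_h$ to be bounded in a positive-order norm. Your duality reduction $\|F_h(t)x\| \le \|x\|_{-\rho}\sup_{\|y\|\le 1}\|F_h(t)y\|_{\rho}$ is legitimate ($F_h(t)$ is indeed self-adjoint and maps $H$ into $\dot H^1$ for $t>0$), and the splitting $F_h(t) = E_h(t/2)P_hF_h(t/2) + F_h(t/2)E(t/2)$ handles the first summand cleanly; also, the $\dot H^1$-Ritz bound $\|(I-R_h)z\|_1 \le Ch\|z\|_2$ that you need is not literally in Assumption \ref{as:4} but does follow from it via Galerkin orthogonality, $\|(I-R_h)z\|_1^2 = ((I-R_h)z, Az) \le Ch^2\|z\|_2^2$.

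The genuine gap is at the second summand for small times. You need $\|F_h(s)z\|_1 \le Ch\|z\|_2$ \emph{uniformly} in $s>0$, but the standard $\theta$--$\varrho$ decomposition gives $\theta(0) = P_h(I-R_h)z$, and the term $\|A_h^{1/2}E_h(s)\theta(0)\| \le Cs^{-1/2}h^2\|z\|_2$ is only $\le Ch\|z\|_2$ for $s \ge h^2$; likewise the Duhamel integral carries the non-integrable singularity $\|\varrho_t(s)\| \le Ch^2 s^{-1}\|z\|_2$. As $s\to 0$ the quantity $\|F_h(s)z\|_1$ tends to $\|(P_h-I)z\|_1$, and bounding \emph{that} by $Ch\|z\|_2$ amounts to an $H^1$-stability property of $P_h$ of the type of Assumption \ref{as:5} (or an inverse estimate), neither of which is available in Lemma \ref{lem:Fh1}, which assumes only Assumption \ref{as:4}. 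So your intermediate claim, as stated for all $t>0$, cannot be established from the stated hypotheses. The fix is standard but must be said: for $t \le h^2$ the conclusion of \emph{(iii)} with $\rho=1$ already follows from part \emph{(ii)}, since $t^{-1/2} = t^{-1}t^{1/2} \le h\,t^{-1}$; for $t \ge h^2$ your argument goes through because the smoothing factor $t^{-1/2} \le h^{-1}$ converts every stray $h^2$ into an $h$. With that case distinction (and a corresponding split of the Duhamel integral at $s = t/2$) your route closes; without it, the $\rho=1$ endpoint does not.
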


\begin{proof}
  The proof of estimate \emph{(i)} can be found in \cite[Theorem
  3.5]{thomee2006}. 

  In order to prove \emph{(ii)} we first note that the case $\rho = 0$ is
  true by \emph{(i)}. Lemma \ref{lem:1}
  \emph{(i)} yields
  \begin{align}
    \label{eq3:smoothE-1}
    \|E(t) x\| = \|A^{\frac{1}{2}} E(t) A^{-\frac{1}{2}} x \| \le C
    t^{-\frac{1}{2}} \| x \|_{-1}.     
  \end{align}
  Together with \eqref{eq3:smoothEh2} this proves 
  \begin{align*} 
    \| F_h(t) x \| &\le \| E_h(t) P_h x\| + \|E(t) x\| \le C t^{-\frac{1}{2}}
    \| x \|_{-1} 
  \end{align*}
  for all $x \in \dot{H}^{-1}$. This settles the case
  $\rho = 1$. The intermediate cases follow by the interpolation technique
  which is demonstrated in the proof of \cite[Theorem 3.5]{thomee2006}.

  For \emph{(iii)} the case $\rho = 0$ is again covered by \emph{(i)}.
  Thus, it is enough to consider the case $\rho = 1$. First, by using
  \eqref{eq3:rel}, \eqref{eq3:smoothEh}, and \eqref{eq3:Rh}, we observe that
  \begin{align*}
    \big\| F_h(t) x \big\| &= \big\| A_h E_h(t) A_h^{-1} P_h x - A E(t) A^{-1}
    x \big\| \\ 
    &\le \big\| A_h E_h(t) P_h (R_h A^{-1} x - A^{-1} x) \big\| +
    \big\| (A_h E_h(t) P_h -  A E(t)) A^{-1} x \big\| \\
    &\le C t^{-1} \big\| (R_h - I) A^{-1} x \big\| + \Big\|
    \frac{\mathrm{d}F_h}{\mathrm{d}t}(t) A^{-1} x \Big\| \\
    &\le C t^{-1} h \big\| A^{-1} x \big\|_{1} + \Big\|
    \frac{\mathrm{d} F_h}{\mathrm{d}t} (t) A^{-1} x \Big\|.
  \end{align*}
  Since $\| A^{-1} x \|_1 = \| x \|_{-1}$ the first term is already in the
  desired form. The last term is estimated by a slightly
  modified version of \cite[Theorem 3.4]{thomee2006}, which gives
  \begin{align*}
    \Big\| \frac{\mathrm{d} F_h}{\mathrm{d}t}(t) A^{-1} x \Big\| \le C h t^{-1}
    \big\| A^{-1} x \big\|_{1}.    
  \end{align*}
  This proves the case $\rho = 1$ and the intermediate cases follow by
  interpolation.
\end{proof}

\begin{lemma}
  \label{lem:Fh2}
  Let $0 \le \rho \le 1$. Under Assumption \ref{as:4} the operator $F_h$
  satisfies the following estimates. 

  (i) There exists a constant $C$ such that
  \begin{align*}
    \Big\| \int_{0}^{t} F_h(\sigma) x \ds \Big\| \le C h^{2 - \rho} \| x
    \|_{-\rho} \text{ for all } x \in \dot{H}^{-\rho},\; t > 0, \; h \in (0,1].
  \end{align*}

  (ii) There exists a constant $C$ such that
  \begin{align*}
    \Big( \int_{0}^{t} \big\| F_h(\sigma) x \big\|^2 \ds \Big)^{\frac{1}{2}}
    \le C h^{1 + \rho} \| x \|_{\rho} \text{ for all } x \in \dot{H}^{\rho},
    \; t > 0, \; h \in (0,1].
  \end{align*}
\end{lemma}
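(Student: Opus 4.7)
I would compute $\int_0^t F_h(\sigma)x\,\ds$ explicitly. Using the semigroup identities $\int_0^t E(\sigma)x\,\ds = A^{-1}(I - E(t))x$ and $\int_0^t E_h(\sigma)P_hx\,\ds = A_h^{-1}(I - E_h(t))P_hx$, together with the key relation~\eqref{eq3:rel} $A_h^{-1}P_h = R_hA^{-1}$, and regrouping yields
\[
\int_0^t F_h(\sigma)x\,\ds = -(I-E(t))(I-R_h)A^{-1}x - F_h(t)R_hA^{-1}x.
\]
The first term is bounded using $\|I-E(t)\|\le 2$ and Assumption~\ref{as:4} interpolated to $s=2-\rho\in[1,2]$, giving $\|(I-R_h)A^{-1}x\|\le Ch^{2-\rho}\|x\|_{-\rho}$. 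For the second term, write $R_hA^{-1}x = A^{-1}x + (R_h-I)A^{-1}x$: Lemma~\ref{lem:Fh1}\emph{(i)} with $\mu=\nu=2-\rho$ handles the smooth part, and the trivial $\|F_h(t)\|\le 2$ combined with the Ritz estimate handles the remainder.

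\textbf{Part (ii).} For $\sigma>0$ I would decompose via the Ritz projector
\[
F_h(\sigma)x = \theta(\sigma) - \varrho(\sigma), \quad \varrho(\sigma):=(I-R_h)E(\sigma)x, \quad \theta(\sigma):=E_h(\sigma)P_hx - R_hE(\sigma)x \in S_h,
\]
which is well-defined because $E(\sigma)x \in \dot{H}^s$ for every $s\ge0$ by smoothing. For the $\varrho$-piece, Assumption~\ref{as:4} interpolated to $s=1+\rho$ yields $\|\varrho(\sigma)\|\le Ch^{1+\rho}\|A^{1/2}E(\sigma)A^{\rho/2}x\|$, and Lemma~\ref{lem:1}\emph{(iii)} with $\nu=1$ applied to the initial datum $A^{\rho/2}x\in H$ then gives the required
\[
\int_0^t\|\varrho(\sigma)\|^2\,\ds \le Ch^{2(1+\rho)}\|x\|_\rho^2.
\]
The $\theta$-piece satisfies the $S_h$-ODE $\theta'+A_h\theta = (R_h-P_h)AE(\sigma)x$, and would be controlled by combining the discrete maximal $L^2$-in-time regularity of $E_h$ (the analogue of Lemma~\ref{lem:1}\emph{(iii)}), the dual estimate $\|A_h^{-1/2}P_hy\|\le\|y\|_{-1}$ from~\eqref{eq3:discnorm}, and a negative-norm bound on the residual $(R_h-P_h)AE(\cdot)x$.

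\textbf{The main obstacle} is the endpoint $\rho=1$. A direct split-integral argument based only on pointwise bounds from Lemma~\ref{lem:Fh1}\emph{(i)} -- taking $\mu=\nu=\rho$ on $(0,h^2]$ and $\mu=2$, $\nu=\rho$ on $[h^2,t]$ -- handles $\rho\in[0,1)$ cleanly, with constant of order $1/(1-\rho)$; at $\rho=1$, however, it produces a spurious $\log(t/h^2)$ factor. Only the sharp parabolic $L^2$-in-time smoothing of Lemma~\ref{lem:1}\emph{(iii)}, exploited via the Ritz decomposition above, gains the extra half-derivative needed to absorb the logarithm and deliver the optimal power $h^{1+\rho}$ uniformly on $\rho\in[0,1]$.
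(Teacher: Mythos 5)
Your argument is correct in substance and arrives at the same key estimates as the paper, but by a partly different route, so let me compare. For part \emph{(i)} the paper proves only the endpoints: $\rho=0$ is quoted from the proof of \cite[Theorem 3.3]{thomee2006}, and $\rho=1$ is done by writing $A_hE_h(\sigma)=-\frac{\mathrm{d}}{\mathrm{d}\sigma}E_h(\sigma)$ and applying the fundamental theorem of calculus; the intermediate cases follow by interpolation. Your closed-form identity $\int_0^t F_h(\sigma)x\ds=(I-E_h(t))R_hA^{-1}x-(I-E(t))A^{-1}x$ (which is what your regrouped expression amounts to, using $P_hR_h=R_h$) is a clean repackaging of the same computation via \eqref{eq3:rel}, and it treats all $\rho\in[0,1]$ at once provided one grants the fractional-order Ritz estimate $\|(I-R_h)y\|\le Ch^{s}\|y\|_{s}$ for $s\in[1,2]$ --- the same interpolation of Assumption \ref{as:4} that the paper invokes throughout, so this is not a gap. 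Your application of Lemma \ref{lem:Fh1} \emph{(i)} with $\mu=\nu=2-\rho$ and the identity $\|A^{-1}x\|_{2-\rho}=\|x\|_{-\rho}$ are both correct. For part \emph{(ii)}, your treatment of $\varrho=(I-R_h)E(\cdot)x$ \emph{is} the paper's entire argument: Assumption \ref{as:4} at order $1+\rho$ followed by Lemma \ref{lem:1} \emph{(iii)} with $\nu=1$ applied to $A^{\rho/2}x$. Your closing remark is also exactly the right diagnosis: a pointwise-in-time splitting via Lemma \ref{lem:Fh1} \emph{(i)} degenerates to $h^{2}\log(t/h^{2})$ at $\rho=1$, and only the parabolic square-function estimate removes the logarithm.

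The one place where you do more work than the paper, and where your write-up is only a sketch, is the $\theta$-piece in \emph{(ii)}. The paper disposes of it in one line by quoting Thom\'ee's inequality (2.28), $\int_0^t\|F_h(\sigma)x\|^2\ds\le\int_0^t\|(R_h-I)E(\sigma)x\|^2\ds$, which says precisely that the $\theta$-contribution is dominated by the $\varrho$-contribution. Your error equation $\theta'+A_h\theta=(R_h-P_h)AE(\sigma)x$ is correct (since $A_hR_h=P_hA$ by \eqref{eq3:rel}), but the programme of bounding the residual pointwise in a negative norm is delicate: measured naively the forcing is too singular as $\sigma\to0$. The clean way to close your argument is to observe that the right-hand side equals $-P_h\varrho'(\sigma)$ and to integrate by parts in the Duhamel representation, which gives $\theta(\sigma)=-P_h\varrho(\sigma)-\int_0^{\sigma}A_hE_h(\sigma-s)P_h\varrho(s)\,\mathrm{d}s$ (the boundary terms cancel since $\theta(0)+P_h\varrho(0)=0$), and then to invoke the $h$-uniform maximal $L^2$-regularity of the self-adjoint semigroup $E_h$ to bound the convolution in $L^2(0,t;H)$ by $C\int_0^t\|\varrho\|^2\ds$. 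Either supply that step or simply cite \cite[(2.28)]{thomee2006} as the paper does; as written, the $\theta$-estimate is plausible but not yet a proof.
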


\begin{proof}
  As in the proof of the previous lemma it is enough to show the estimates for
  $\rho =0$ and $\rho=1$. Then the intermediate cases follow by interpolation.

  The proof of \emph{(i)} with $\rho = 0$ is contained in
  the proof of \cite[Theorem $3.3$]{thomee2006}, where the notation
  \begin{align*}
    \tilde{e}(t) = \int_{0}^{t} F_h(\sigma) x \ds
  \end{align*}
  is used.  

  Here we present a proof of \emph{(i)} with $\rho =1$. To this end we use
  \eqref{eq3:rel} and find the estimate
  \begin{align*}
    \Big\| \int_{0}^{t} F_h(\sigma) x \ds \Big\| &= \Big\| \int_{0}^{t}
    \big(A_h E_h(\sigma) A_h^{-1} P_h - A E(\sigma) A^{-1} \big) x \ds \Big\|
    \\
    &\le \Big\| \int_{0}^{t}
    A_h E_h(\sigma) P_h \big( R_h - I \big) A^{-1} x \ds \Big\| \\
    &\quad + \Big\|
    \int_{0}^{t} \big(A_h E_h(\sigma)P_h - A E(\sigma)\big) A^{-1} x \ds \Big\|
    \\
    &= \Big\| \int_{0}^{t} \frac{\mathrm{d} E_h}{\mathrm{d}\sigma}
    (\sigma) P_h \big( R_h - I \big) 
    A^{-1} x \ds \Big\| + 
    \Big\| \int_{0}^{t} \frac{\mathrm{d} F_h}{\mathrm{d}\sigma} (\sigma) A^{-1}
    x \ds \Big\|. 
  \end{align*}
  By the fundamental theorem of calculus, $\| P_h y \| \le \|y\|$ for all $y
  \in H$, and Assumption \ref{as:4} we have for the first term
  \begin{align*}
    \Big\| \int_{0}^{t} \frac{\mathrm{d} E_h}{\mathrm{d} \sigma} (\sigma) P_h
    \big( R_h - I \big) 
    A^{-1} x \ds \Big\| &= \big\| \big( E_h(t) - I\big) P_h \big( R_h - I \big)
    A^{-1} x \big\| \\
    &\le C h \big\|A^{-1} x \big\|_{1} = Ch \| x \|_{-1}.    
  \end{align*}
  For the second term we use Lemma \ref{lem:Fh1}
  \emph{(i)} with $\mu = \nu = 1$. This yields
  \begin{align*}
    \Big\| \int_{0}^{t} \frac{\mathrm{d} F_h}{\mathrm{d} \sigma} (\sigma)
    A^{-1} x \ds \Big\| &= \big\| 
    \big( F_h(t) - F_h(0) \big) A^{-1} x \big\| \\
    &\le \big\| F_h(t) A^{-1} x \big\| + \big\| (I - P_h) A^{-1} x \big\| \le C
    h \| x \|_{-1}.
  \end{align*}
  In the last step we used the best approximation property of the
  orthogonal projector $P_h$, which, together with \eqref{eq3:Rh}, gives
  $$\big\| (P_h - I) y \big\| \le \big\| (R_h - I)
  y \big\| \le Ch\|y\|_{1} \text{ for all } y \in \dot{H}^1.$$

  It remains to prove \emph{(ii)}. From \cite[(2.28)]{thomee2006} we have the
  inequality
  \begin{align*}
    \int_{0}^{t} \big\| F_h(\sigma) x \big\|^2 \ds \le \int_0^{t} \big\| (R_h -
    I) E(\sigma) x \big\|^2 \ds.    
  \end{align*}
  In both cases, $\rho \in \{0,1\}$, we have by \eqref{eq3:Rh}
  \begin{align*}
    \big\| (R_h - I) E(\sigma) x \big\| \le  C h^{1 + \rho} \big\| E(\sigma) x
    \big\|_{1+\rho} = C h^{1 + \rho} \big\| A^{\frac{1}{2}} E(\sigma)
    A^{\frac{\rho}{2}} x \big\|.
  \end{align*}
  Applying Lemma \ref{lem:1} \emph{(iii)} with $\nu = 1$ completes the proof.
\end{proof}

\subsection{Error estimates for a fully discrete approximation}
\label{subsec:2}

In this subsection we consider a fully discrete approximation of the homogeneous
equation \eqref{eq3:homeq}. Our method of choice is a combination of the
spatial Galerkin discretization together with the well-known
implicit Euler scheme. As in Subsection \ref{subsec:1} let a family of
subspaces $(S_h)_{h \in (0,1]} \subset \dot{H}^1$ be given. The fully discrete
approximation scheme is defined by the recursion 
\begin{align}
  \label{eq3:homeq_disc}
  U^j_h + k A_h U^j_h = U^{j-1}_h,& \quad j = 1,2, \ldots \quad \text{with }
  U^0_h = P_h x. 
\end{align}
Here $k \in (0,1]$ denotes a fixed time step and $U_h^j \in S_h$ denotes the
approximation of 
$u(t_j)$ at time $t_j = jk$. A closed form representation of
\eqref{eq3:homeq_disc} is given by
\begin{align}
  \label{eq3:homeq_disc2}
  U^j_h = (I + kA_h)^{-j} P_h x, \quad j=0,1,2,\dots\,.
\end{align}
In order to make the results from \cite[Ch.\ 7]{thomee2006} accessible and to
indicate generalizations to other onestep methods onestep methods we introduce
the rational function 
\begin{align*}
  R(z) = \frac{1}{1 + z} \quad \text{for } z \in \R, z \neq 1.
\end{align*}

By $R(k A_h)$ we denote the linear operator which is defined by
\begin{align}
  \label{eq3:r}
  R(k A_h) x = \sum_{m = 1}^{N_{h}} R(k \lambda_{h,m} ) (x, \varphi_{h,m} )
  \varphi_{h,m}, 
\end{align}
where $(\lambda_{h,m})_{m = 1}^{N_h}$ are the positive eigenvalues of
$A_h\colon S_h \to S_h$ with corresponding orthonormal eigenvectors
$(\varphi_{h,m})_{m = 1}^{N_h} \subset S_h$ and $\dim(S_h) = N_h$. With this
notation \eqref{eq3:homeq_disc} is equivalently written as
\begin{align}
  \label{eq3:homeq_disc3}
  U^j_h = R(k A_h)^j P_h x, \quad j = 0,1,2,\dots\, .
\end{align}
The characteristic function $R$ of the implicit Euler scheme enjoys the
following properties with $q = 1$:
\begin{align}
  \label{eq3:IV}
  \begin{split}
    R(z) = \ee^{-z} + \mathcal{O}(z^{q+1}) &\quad \text{for } z \to 0,\\
    | R(z) | < 1 \quad \text{for all } z > 0,&\quad \text{and } \lim_{z \to
    \infty} R(z) = 0.
  \end{split}
\end{align}
In the nomenclature of \cite[Ch.\ 7]{thomee2006} the rational function $R(z)$
is an approximation of $\ee^{-z}$ with accuracy $q = 1$ and is said to be of
type IV. A onestep scheme, whose characteristic rational function possesses the
properties \eqref{eq3:IV}, is unconditionally stable and satisfies, for $\rho
\in [0,1]$, the discrete smoothing property 
\begin{align}
  \label{eq3:smoothr}
  \| A^{\rho}_h R(k A_h)^j x_h \| \le C t_j^{-\rho} \| x_h \| \quad \text{for
  all } j = 1,2,\ldots \text{ and } x_h \in S_h,
\end{align}
where the constant $C= C(\rho)$ is independent of $k,h$ and $j$. For a proof of
\eqref{eq3:smoothr} we refer to \cite[Lemma\ 7.3]{thomee2006}.

Further, as in the proof of \cite[Theorem 7.1]{thomee2006} it follows from
\eqref{eq3:IV} that there exists a constant $C$ with 
\begin{align}
  \label{eq3:r1}
  | R(z) - \ee^{-z} | \le C z^{q+1} \text{ for all } z \in [0, 1] 
\end{align}
and there exists a constant $c \in (0, 1)$ with
\begin{align}
  \label{eq3:r2}
  | R(z) | \le \ee^{-c z} \text{ for all } z \in [0, 1].
\end{align}

The rest of this subsection deals with
estimates of the error between the discrete approximation $U^j_h$ and
the solution $u(t_j)$. For the error analysis in Section \ref{sec:full} it will
be convenient to introduce an error operator $$F_{kh}(t) := E_{kh}(t) P_h -
E(t),$$
which is defined for all $t \ge 0$, where
\begin{align}
  \label{eq3:defEkh}
  E_{kh}(t) := R(kA_h)^j, \quad \text{if } t \in [t_{j-1}, t_j) \text{ for }
  j=1,2,\ldots\, .
\end{align}
The mapping $t \mapsto E_{kh}(t)$, and hence $t \mapsto F_{kh}(t)$, is right
continuous with left limits. A simple consequence
of \eqref{eq3:smoothr} and \eqref{eq3:discnorm} is the inequality
\begin{align}
  \label{eq3:smoothEkh}
  \big\| E_{kh}(t) P_h x \big\| = \big\| A_h^{\frac{1}{2}} R(k A_h)^j
  A_h^{-\frac{1}{2}} P_h x \big\| \le C t_{j}^{-\frac{1}{2}} \big\| x
  \big\|_{-1} \le C t^{-\frac{1}{2}} \big\| x
  \big\|_{-1},  
\end{align}
which holds for all $x \in \dot{H}^{-1}$, $h,k \in (0,1]$ and $t > 0$ with $t
\in [t_{j-1}, t_j)$, $j = 1,2,\ldots$.

The following lemma is the time discrete analogue of Lemma \ref{lem:Fh1}.

\begin{lemma}
  \label{lem:Fkh1}
  Under Assumption \ref{as:4}
  the following estimates hold for the
  error operator $F_{kh}$.

  (i) Let $0 \le \nu \le \mu \le 2$. Then there exists a constant $C$ such that
  \begin{align*}
    \big\| F_{kh}(t) x \big\| \le C \big( h^{\mu} + k^{\frac{\mu}{2}} \big)
    t^{-\frac{\mu - \nu}{2}} \big\| x \big\|_{\nu} \text{ for all } x \in
    \dot{H}^\nu, \; t > 0, \; h, k \in (0,1].
  \end{align*}

  (ii) Let $0 \le \rho \le 1$. Then there exists a constant $C$ such that
  \begin{align*}
    \big\| F_{kh}(t) x \big\| \le C t^{-\frac{\rho}{2}} \big\| x \big\|_{-\rho}
    \text{ for all } x \in \dot{H}^{-\rho}, \; t > 0, \; h, k \in (0,1].
  \end{align*}

  (iii) Let $0 \le \rho \le 1$. Then there exists a constant $C$ such that
  \begin{align*}
    \big\| F_{kh}(t) x \big\| \le C \big( h^{2- \rho} + k^{\frac{2 - \rho}{2}}
    \big) t^{-1} \big\| x
    \big\|_{-\rho} \text{ for all } x \in \dot{H}^{-\rho}, \; t > 0, \; h, k
    \in (0,1]. 
  \end{align*}
\end{lemma}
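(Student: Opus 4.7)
The plan is to follow the blueprint of Lemma \ref{lem:Fh1} (which handles the pure spatial error $F_h$) but interpose an additional comparison between $E_{kh}(t) P_h$ and $E_h(t) P_h$ to isolate the time-discretization error. Concretely, for every $t \in [t_{j-1},t_j)$, I would split
\begin{align*}
F_{kh}(t) x = \bigl( R(kA_h)^j - E_h(t) \bigr) P_h x + F_h(t) x,
\end{align*}
so that the spatial part is already controlled by Lemma \ref{lem:Fh1}, and only the ``pure time error'' $G_{kh}(t):= (R(kA_h)^j - E_h(t)) P_h$ needs new work. For that term I would further decompose
\begin{align*}
R(kA_h)^j - E_h(t) = \bigl(R(kA_h)^j - E_h(t_j)\bigr) + E_h(t)\bigl(E_h(t_j - t) - I\bigr),
\end{align*}
the second summand being a small semigroup tail on an interval of length $\le k$, handled by Lemma \ref{lem:1} (ii) applied to $E_h$ in combination with the discrete smoothing \eqref{eq3:smoothr}.

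For the main piece $R(kA_h)^j - E_h(t_j)$, I would pass to the eigenbasis $(\varphi_{h,m})_{m=1}^{N_h}$ of $A_h$: on each eigenmode the operator acts as multiplication by $R(k\lambda_{h,m})^j - \mathrm{e}^{-\lambda_{h,m} t_j}$, and I would estimate this scalar quantity using \eqref{eq3:r1} when $k\lambda_{h,m}\le 1$ and \eqref{eq3:r2} (together with $|\mathrm{e}^{-z}|\le 1$) when $k\lambda_{h,m}>1$. Combined with the elementary inequality $\lambda^{\alpha}\mathrm{e}^{-c\lambda t_j}\le C t_j^{-\alpha}$, this yields the type of bound
\begin{align*}
\bigl\| \bigl(R(kA_h)^j - E_h(t_j)\bigr) A_h^{-\nu/2} P_h x \bigr\| \le C k^{\mu/2} t_j^{-(\mu-\nu)/2} \|A_h^{-\nu/2} P_h x\|
\end{align*}
for $0 \le \nu \le \mu \le 2$, which is the discrete analogue of Lemma \ref{lem:Fh1} (i). Once the $k$-part is in hand, adding the output of Lemma \ref{lem:Fh1} (i) gives part (i) of the present lemma. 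The main obstacle is precisely this spectral estimate: one must keep $t_j$ (rather than $t$) on the right-hand side through the analysis and then pay the price $t/t_j \le 2$ (for $j \ge 1$, $t \ge t_{j-1} \ge t_j/2$ when $j\ge 2$, and the case $j=1$ is absorbed because $t\le k$ makes $t^{-(\mu-\nu)/2}$ and $k^{\mu/2} t^{-(\mu-\nu)/2}$ both controllable by $k^{\mu/2} t^{-(\mu-\nu)/2}$ with benign constants).

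Parts (ii) and (iii) then mirror the strategy of Lemma \ref{lem:Fh1} exactly. For (ii), the endpoint $\rho=0$ is part (i) with $\mu=\nu=0$; for $\rho=1$ I would use \eqref{eq3:smoothEkh} on the discrete side and \eqref{eq3:smoothE-1} on the continuous side and add them, giving $\|F_{kh}(t)x\| \le C t^{-1/2}\|x\|_{-1}$; intermediate $\rho$ by interpolation of the operator $F_{kh}(t)$ between $\dot H^{-1}$ and $H$, exactly as in \cite[Theorem 3.5]{thomee2006}. For (iii), again the endpoint $\rho = 0$ is part (i) with $\mu = 2$, $\nu = 0$, and the endpoint $\rho = 1$ is obtained by writing
\begin{align*}
F_{kh}(t)x = A_h R(kA_h)^j P_h (R_h - I) A^{-1} x + \bigl(A_h R(kA_h)^j P_h - A E(t)\bigr) A^{-1} x,
\end{align*}
where I used \eqref{eq3:rel} in the form $A_h^{-1} P_h = R_h A^{-1}$. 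The first summand is bounded using \eqref{eq3:smoothr} with $\rho=1$ (giving $t_j^{-1}$, hence $\le Ct^{-1}$) together with Assumption \ref{as:4}; the second summand equals $(A_h E_{kh}(t) P_h - A E_h(t)) A^{-1} x + (A E_h(t) - A E(t)) A^{-1} x$ and is again handled via the time/space splitting plus part (i) applied with $\mu = \nu = 1$. Intermediate $\rho \in (0,1)$ follow by interpolation. The real work is concentrated in the spectral-calculus estimate described above; everything else is a clean analogue of the arguments already carried out for $F_h$.
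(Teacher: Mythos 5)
Your overall architecture for parts \emph{(ii)} and \emph{(iii)} matches the paper's closely (stability plus \eqref{eq3:smoothEkh} and \eqref{eq3:smoothE-1} for \emph{(ii)}; the $R_hA^{-1}$ trick via \eqref{eq3:rel} for \emph{(iii)}), but for part \emph{(i)} you take a genuinely different route: you split $F_{kh}(t)=\big(R(kA_h)^j-E_h(t)\big)P_h+F_h(t)$ and try to prove the pure time-discretization error by spectral calculus in the eigenbasis of $A_h$, whereas the paper splits $F_{kh}(t)=\big(R(kA_h)^jP_h-E(t_j)\big)+\big(E(t_j)-E(t)\big)$ and cites the \emph{combined} space-time estimates of Thom\'ee (Theorems 7.7 and 7.8) for the endpoint cases, interpolating twice. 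This choice is not cosmetic, and it is where your argument has a gap. Your headline estimate
\begin{align*}
\bigl\| \bigl(R(kA_h)^j - E_h(t_j)\bigr) A_h^{-\nu/2} P_h x \bigr\| \le C k^{\mu/2} t_j^{-(\mu-\nu)/2} \|A_h^{-\nu/2} P_h x\|
\end{align*}
is, as written, an operator-norm bound on $S_h$ and is false for $\nu>0$: taking $j=1$ and an eigenvalue with $k\lambda_{h,m}=1$ gives $|R(1)-\ee^{-1}|\approx 0.13$ on that mode, while the claimed right-hand side is $k^{\nu/2}\to 0$. The correct spectral estimate must carry the positive weight, i.e.\ $\|(R(kA_h)^j-E_h(t_j))v_h\|\le Ck^{\mu/2}t_j^{-(\mu-\nu)/2}\|A_h^{\nu/2}v_h\|$, and to conclude you must then bound $\|A_h^{\nu/2}P_hx\|$ by $\|x\|_\nu$.

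That last reduction is exactly what fails under Assumption \ref{as:4} alone. By \eqref{eq3:normAh}, $\|A_h^{1/2}P_hx\|=\|P_hx\|_1$, so the case $\nu=1$ of your reduction \emph{is} Assumption \ref{as:5} --- which the paper deliberately introduces as a separate hypothesis, needed only for Lemma \ref{lem:Fkh2} \emph{(ii)}, and which Lemma \ref{lem:Fkh1} is not allowed to use; the case $\nu=2$ would require an even stronger ``$H^2$-stability'' of $P_h$ (note $A_hP_h\ne P_hA$ in general, only $A_hR_h=P_hA$). The paper sidesteps this throughout: in part \emph{(i)} by invoking Thom\'ee's Theorems 7.7/7.8, whose proofs handle the $P_h$-versus-$R_h$ issue internally, and in part \emph{(iii)} by arranging the Parseval computation so that only the \emph{negative} power $A_h^{-1/2}P_h$ appears, for which \eqref{eq3:discnorm} gives $\|A_h^{-1/2}P_hx\|\le\|x\|_{-1}$ under Assumption \ref{as:4} alone. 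Your approach is salvageable if you route the smooth-data endpoints through $R_h$ instead of $P_h$ (using $\|A_h^{1/2}R_hx\|=\|R_hx\|_1\le\|x\|_1$ and $\|A_hR_hx\|=\|P_hAx\|\le\|x\|_2$, and absorbing $(P_h-R_h)x$ by stability and \eqref{eq3:Rh} into the $h^\mu$ term), but as it stands the central estimate is wrong and the step it needs is the one stability property the lemma's hypotheses do not grant. (A minor point: since $t<t_j$, the replacement $t_j^{-\alpha}\le t^{-\alpha}$ is free; no factor ``$t/t_j\le 2$'' is needed.)
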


\begin{proof}
  (i) Let $t >0$ be such that $t_{j-1} \le t < t_j$ and $x \in \dot{H}^{\nu}$.
  Then we get 
  \begin{align*}
    \big\| F_{kh}(t) x \big\| &\le \big\| \big(R(kA_h)^j P_h - E(t_j) \big) x
    \big\| + \big\| \big( E(t_j) - E(t) \big) x \big\|.
  \end{align*}
  For the second summand we have by Lemma \ref{lem:1} \emph{(i)} and \emph{(ii)}
  \begin{align*}
    \big\| \big( E(t_j) - E(t) \big) x \big\| &= \big\| A^{-\frac{\mu}{2}}
    \big( E(t_j - t) - I \big) A^{\frac{\mu - \nu}{2}} E(t)
    A^{\frac{\nu}{2}} x \big\| \\   
    &\le C (t_j - t)^{\frac{\mu}{2}} t^{-\frac{\mu - \nu}{2}} \big\|
    A^{\frac{\nu}{2}} x \big\| \le C k^{\frac{\mu}{2}} t^{-\frac{\mu - \nu}{2}}
    \big\| x \big\|_{\nu}.
  \end{align*}
  Further, the first summand is the error between the exact solution $u$ of
  \eqref{eq3:homeq} and the fully discrete scheme \eqref{eq3:homeq_disc3} at
  time $t_j$. For the case $\mu = \nu = 2$, \cite[Theorem $7.8$]{thomee2006}
  gives the estimate  
  \begin{align*}
    \big\| \big(R(kA_h)^j P_h - E(t_j) \big) x \big\| \le C \big(h^2 + k \big)
    \big\| x \big\|_{2}.   
  \end{align*}
  By the stability of the numerical scheme, that is \eqref{eq3:smoothr} with
  $\rho = 0$, we also have the case $\mu = \nu = 0$. Hence,  
  \begin{align}
    \label{eq3:lem_stab}
    \big\| F_{kh}(t_j) x \big\| \le C \big\| x \big\|,
  \end{align}
  and, as above, the constant $C$ is independent of $h, k \in (0,1]$,
  $t_j > 0$, and $x$. The same interpolation technique, which is used
  in the proof of \cite[Theorem $7.8$]{thomee2006}, gives us the intermediate
  cases for $\mu = \nu$ and $\mu \in [0,2]$, that is
  \begin{align}
    \label{eq3:lem_numu}
    \big\| \big(R(kA_h)^j P_h - E(t_j) \big) x \big\| \le C \big(h^\mu +
    k^{\frac{\mu}{2}} \big) \big\| x \big\|_{\mu}.
  \end{align}
  On the other hand, \cite[Theorem $7.7$]{thomee2006} proves the case
  $\nu = 0$ and $\mu = 2$. Hence, we have
  \begin{align*}
    \big\| \big(R(kA_h)^j P_h - E(t_j) \big) x \big\| \le C \big(h^2 +
    k \big) t_j^{-1} \big\| x \big\|,    
  \end{align*}
  where the constant $C$ is again independent of $h, k \in (0,1]$,
  $t_j > 0$, and $x \in H$. An interpolation between this estimate and
  \eqref{eq3:lem_stab} shows
  \begin{align}
    \label{eq3:lem_nu0}
    \big\| \big(R(kA_h)^j P_h - E(t_j) \big) x \big\| \le C \big(h^\mu +
    k^{\frac{\mu}{2}} \big) t_j^{-\frac{\mu}{2}} \big\| x \big\|    
  \end{align}
  for all $\mu \in [0,2]$. For fixed $\mu \in [0,2]$ the proof of \emph{(i)} is
  completed by an additional interpolation with respect to $\nu \in [0,\mu]$
  between \eqref{eq3:lem_numu} and \eqref{eq3:lem_nu0} and the fact that
  $t_j^{-\frac{\mu}{2}} \le t^{-\frac{\mu}{2}}$.

  The proof of \emph{(ii)} works analogously. The case $\rho = 0$ is true by
  \eqref{eq3:lem_stab} and the case $\rho = 1$ follows by \eqref{eq3:smoothE-1}
  and \eqref{eq3:smoothEkh}, since
  \begin{align*}
    \big\| F_{kh}(t) x \big\| \le \big\| E_{kh}(t) P_h x \big\| + \big\| E(t)
    x \big\| \le C t^{-\frac{1}{2}} \big\| x \big\|_{-1}.
  \end{align*}
  The intermediate cases follow by interpolation.

  For \emph{(iii)} the case $\rho = 0$ is already included in \emph{(i)} with
  $\mu = 2$ and $\nu = 0$. Thus, it remains to show the case $\rho = 1$. For $t
  > 0$ with $t_{j-1} \le t < t_j$ we have
  \begin{align*}
    \big\| F_{kh}(t) x \big\| &\le \big\| \big( R(k A_h)^j - E_h(t_j) \big) P_h
    x \big\| + \big\| \big( E_h(t_j) P_h - E(t_j) \big) x \big\| \\
    &\quad + \big\| \big( E(t_j) - E(t) \big) x \big\| =: T_1 + T_2 +
    T_3.
  \end{align*}
  As in \eqref{eq3:r} we denote by
  $(\lambda_{h,m})_{m=1}^{N_h}$ the positive eigenvalues of $A_h$ with
  cor\-responding orthonormal eigenvectors $(\varphi_{h,m})_{m=1}^{N_h} \subset
  S_h$. For $T_1$ we use the expansion of $P_h x$ in terms of
  $(\varphi_{h,m})_{m=1}^{N_h}$. This yields
  \begin{align*}
    T_1^2 &= \Big\| \sum_{m = 1}^{N_h} \lambda_{h,m}^{\frac{1}{2}} \big( R(k
    \lambda_{h,m} )^j - \ee^{-\lambda_{h,m}t_j} \big) \big( P_h x,
    \lambda_{h,m}^{-\frac{1}{2}} \varphi_{h,m} \big)
    \varphi_{h,m} \Big\|^2 \\
    &= \sum_{m=1}^{N_h} \lambda_{h,m} \big| R(k \lambda_{h,m} )^j -
    \ee^{-k \lambda_{h,m} j} \big|^2 \big( A_{h}^{-\frac{1}{2}} P_h x,
    \varphi_{h,m} \big)^2. 
  \end{align*}
  First, we consider all summands with $k \lambda_{h,m} \le 1$. As in the proof
  of \cite[Theorem $7.1$]{thomee2006}, by applying \eqref{eq3:r1} with $q=1$
  and \eqref{eq3:r2}, we get 
  \begin{align}
    \label{eq3:consr}
    \begin{split}
      \big| R(k \lambda_{h,m} )^j - \ee^{-k \lambda_{h,m} j} \big| &= \Big|
      \big( R(k \lambda_{h,m}) - \ee^{-k \lambda_{h,m}} \big) \sum_{i =
      0}^{j-1} R(k \lambda_{h,m})^{j-1-i} \ee^{-k \lambda_{h,m} i} \Big| \\
      &\le C j \big(k \lambda_{h,m} \big)^{2} \ee^{-c (j-1) k \lambda_{h,m}}.
    \end{split}
  \end{align}
  Therefore, since $t_j = jk$ and $k \lambda_{h,m} \le 1$ it holds true that
  \begin{align*}
    \lambda_{h,m} \big| R(k \lambda_{h,m} )^j - \ee^{-k \lambda_{h,m} j} \big|^2
    &\le C (jk)^{-2} k^2 \lambda_{h,m} \big( j k \lambda_{h,m} )^4 \ee^{-2 c j k
    \lambda_{h,m}} \ee^{2 c k \lambda_{h,m}} \\&\le C t_{j}^{-2} k, 
  \end{align*}
  where we also used that $\sup_{z \ge 0} z^4 \ee^{-2c z} < \infty$. 
  
  For all
  summands with $k \lambda_{h,m} > 1$ we get the estimate
  \begin{align*}
    \lambda_{h,m} \big| R(k \lambda_{h,m} )^j - \ee^{-k \lambda_{h,m} j} \big|^2
    < 2 k^{-1} \big(k \lambda_{h,m} \big)^2 \big( \big| 
    R(k \lambda_{h,m})^j \big|^2 + \big| \ee^{-k \lambda_{h,m} j}\big|^2 \big).
  \end{align*}
  As it is shown in the proof of \cite[Lemma $7.3$]{thomee2006}, we have
  \begin{align}
    \label{eq3:rdecay}
    |R(z)| \le \frac{1}{1+cz},\quad \text{for all } z \ge 1, \text{ with } c >
    0.
  \end{align}
  In fact, for the implicit Euler scheme this is immediately true with $c = 1$,
  but it also holds for all rational functions $R(z)$, which satisfy
  \eqref{eq3:IV}. 
  
  Together with $k \lambda_{h,m} > 1$ this yields
  \begin{align*}
    \big| k \lambda_{h,m}  R(k \lambda_{h,m})^j \big|^2 &\le \Big(
    \frac{k \lambda_{h,m}}{1 + c k \lambda_{h,m}} \Big)^{2}
    \big(1+ c k \lambda_{h,m}\big)^{-2(j-1)}\\
    &\le \frac{1}{c^2} \big( 1 + c \big)^{-2(j-1)} = \frac{1}{c^2}
    \ee^{-2(j-1)\log(1+c)} \le C j^{-2}.
  \end{align*}
  As above we also have
  \begin{align*}
    \big| k \lambda_{h,m} \ee^{-k \lambda_{h,m} j}\big|^2 \le C j^{-2}.     
  \end{align*}
  Therefore, also in the case $k \lambda_{h,m} > 1$, it holds that
  \begin{align*}
    \lambda_{h,m} \big| R(k \lambda_{h,m} )^j - \ee^{-k \lambda_{h,m} j} \big|^2
    \le C t_j^{-2} k.    
  \end{align*}
  Together with Parseval's identity and \eqref{eq3:discnorm} we arrive at
  \begin{align*}
    T_1^2 \le C t_j^{-2} k \sum_{m=1}^{\infty} \big(
    A_h^{-\frac{1}{2}} P_h x, \varphi_{h,m} \big)^2 = C t_j^{-2} k \big\|
    A_h^{-\frac{1}{2}} P_h x \big\|^2 \le C t^{-2} k \big\|
    x \big\|^2_{-1}.
  \end{align*}
  The term $T_2$ is covered by Lemma \ref{lem:Fh1} \emph{(iii)} which gives
  \begin{align*}
    T_2 = \big\| F_h(t_j) x \big\| \le C h t_j^{-1} \big\| x \big\|_{-1} \le C
    h t^{-1} \big\| x \big\|_{-1}.
  \end{align*}
  Finally, for $T_3$ we apply Lemma \ref{lem:1} \emph{(i)} with $\nu = 1$ and
  \emph{(ii)} with $\nu = \frac{1}{2}$ and get
  \begin{align*}
    T_3 = \big\| A E(t) A^{-\frac{1}{2}} \big( E(t_j - t) - I \big)
    A^{-\frac{1}{2}} x \big\| \le C t^{-1} (t_j - t)^{\frac{1}{2}} \big\| x
    \big\|_{-1} \le C t^{-1} k^{\frac{1}{2}} \big\| x
    \big\|_{-1}.
  \end{align*}
  Combining the estimates for $T_1$, $T_2$ and $T_3$ proves
  \emph{(iii)} with $\rho = 1$. As usual, the intermediate cases follow by
  interpolation.
\end{proof}

We also have an analogue of Lemma \ref{lem:Fh2}. A time discrete version of
Lemma \ref{lem:Fkh2} \emph{(ii)}, where the integral is replaced by a sum, is
shown in \cite{yan2005}.

\begin{lemma}
  \label{lem:Fkh2}
  Let $0 \le \rho \le 1$. Under Assumption \ref{as:4} the operator $F_{kh}$
  satisfies the following estimates.

  (i) There exists a constant $C$ such that
  \begin{align*}
    \Big\| \int_{0}^{t} F_{kh}(\sigma) x \ds \Big\| \le C \big(
    h^{2 - \rho} + k^{\frac{2 - \rho}{2}} \big) \big\| x \big\|_{-\rho} 
  \end{align*}
  for all $x \in \dot{H}^{-\rho}$, $t > 0$, and $h, k \in (0,1]$.

  (ii) Under the additional Assumption \ref{as:5} there exists a constant $C$
  such that 
  \begin{align*}
    \Big( \int_{0}^{t} \big\| F_{kh}(\sigma) x \big\|^2 \ds
    \Big)^{\frac{1}{2}} \le C \big( h^{1 + \rho} + k^{\frac{1 + \rho}{2}} \big)
    \big\| x \big\|_{\rho} 
  \end{align*}
  for all $x \in \dot{H}^{-\rho}$, $t > 0$, and $h, k \in (0,1]$.
\end{lemma}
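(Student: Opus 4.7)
\medskip

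The plan is to mimic the structure of Lemma \ref{lem:Fh2}, handling the cases $\rho=0$ and $\rho=1$ and then filling in $\rho\in(0,1)$ by interpolation. The key decomposition in both parts is
\begin{align*}
  F_{kh}(\sigma) = \big( E_{kh}(\sigma) - E_h(\sigma) \big) P_h + F_h(\sigma),
\end{align*}
so Lemma \ref{lem:Fh2} already gives the $h$-part of the bound, and the real work is to produce the corresponding $k$-part from the purely time-discrete defect $G_{kh}(\sigma) := (E_{kh}(\sigma) - E_h(\sigma))P_h$, whose action I would analyse in the eigenbasis $(\varphi_{h,m})_{m=1}^{N_h}$ of $A_h$ as in the proof of Lemma \ref{lem:Fkh1} \emph{(iii)}.

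For \emph{(i)} with $\rho=0$, I would write, for $\sigma\in[t_{j-1},t_j)$,
\begin{align*}
  G_{kh}(\sigma) y_h = \sum_{m=1}^{N_h} \big( R(k\lambda_{h,m})^j - \ee^{-\sigma\lambda_{h,m}} \big) (y_h,\varphi_{h,m})\,\varphi_{h,m},
\end{align*}
with $y_h=P_h x$, and integrate termwise. The inner integral $\int_0^t |R(k\lambda_{h,m})^{j(\sigma)} - \ee^{-\sigma\lambda_{h,m}}|\ds$ is bounded uniformly in $\lambda_{h,m}>0$ by a multiple of $k$: in the regime $k\lambda_{h,m}\le 1$ the estimate \eqref{eq3:consr} combined with $\sum_{j\ge 0}(jk\lambda_{h,m})^2\ee^{-cjk\lambda_{h,m}}\ds \lesssim (k\lambda_{h,m})^{-1}$ gives $\lesssim k$, while for $k\lambda_{h,m}>1$ the decay \eqref{eq3:rdecay} makes the discrete tail geometric and the continuous tail exponential, both yielding $\lesssim k$. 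Summing via Parseval gives $\|\int_0^t G_{kh}(\sigma)P_h x\ds\|\le C k \|x\|$. Combined with Lemma \ref{lem:Fh2}\emph{(i)} this proves the $\rho=0$ case. The $\rho=1$ case is derived by the same route as in Lemma \ref{lem:Fh2}\emph{(i)} via $A_h^{-1}P_h = R_h A^{-1}$, reducing to $\|x\|_{-1}$; interpolation yields the intermediate exponents.

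For \emph{(ii)} the spectral computation is the cleaner route: with $y_h = P_h x$,
\begin{align*}
  \int_0^t \|G_{kh}(\sigma)y_h\|^2\ds = \sum_{m=1}^{N_h}\Big(\int_0^t |R(k\lambda_{h,m})^{j(\sigma)}-\ee^{-\sigma\lambda_{h,m}}|^2\ds\Big) (y_h,\varphi_{h,m})^2.
\end{align*}
By the same two-regime analysis as in Lemma \ref{lem:Fkh1}\emph{(iii)}, the inner integral is controlled by $C k^{1+\rho} \lambda_{h,m}^{\rho}$ for $\rho\in\{0,1\}$, so Parseval and $\|A_h^{\rho/2}P_h x\|^2 = \sum_m \lambda_{h,m}^\rho(P_h x,\varphi_{h,m})^2$ yield the bound $C k^{(1+\rho)/2}\|A_h^{\rho/2}P_h x\|$. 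For $\rho=0$ this is $Ck^{1/2}\|x\|$, and for $\rho=1$ I invoke Assumption \ref{as:5} together with \eqref{eq3:normAh} to get $\|A_h^{1/2}P_h x\| = \|P_h x\|_1 \le C\|x\|_1$, giving $Ck\|x\|_1$. Combining with Lemma \ref{lem:Fh2}\emph{(ii)} and interpolating in $\rho$ finishes the proof.

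The main obstacle is the uniform-in-$\lambda_{h,m}$ control of $|R(k\lambda_{h,m})^j - \ee^{-\sigma\lambda_{h,m}}|$ under an integral (and squared, for part \emph{(ii)}): one must simultaneously exploit the local consistency $R(z)=\ee^{-z}+O(z^2)$ for small $z$ and the geometric/exponential decay for large $z$, and check that the borderline region $k\lambda_{h,m}\sim 1$ does not contribute an extra logarithm. Assumption \ref{as:5} enters only in \emph{(ii)} at $\rho=1$ to transfer $\|A_h^{1/2}P_h x\|$ back to $\|x\|_1$, which explains why \emph{(i)} does not require it.
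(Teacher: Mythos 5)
Your overall strategy coincides with the paper's: split $F_{kh}(\sigma) = \big(E_{kh}(\sigma)-E_h(\sigma)\big)P_h + F_h(\sigma)$, invoke Lemma \ref{lem:Fh2} for the spatial defect, control the purely time-discrete defect by Parseval's identity in the eigenbasis of $A_h$ with the two regimes $k\lambda_{h,m}\le 1$ (via \eqref{eq3:consr}) and $k\lambda_{h,m}>1$ (via \eqref{eq3:rdecay}), and fill in intermediate $\rho$ by interpolation. Your sketch of part \emph{(ii)}, including the role of Assumption \ref{as:5} at $\rho=1$ through $\| A_h^{\frac{1}{2}}P_h x\| = \|P_h x\|_1 \le C\|x\|_1$, matches the paper's proof essentially step for step. (The paper additionally reduces to $t=t_n$ and splits the time defect further into $R(kA_h)^j - E_h(t_j)$ and $E_h(t_j)-E_h(\sigma)$, but your direct bound on $|R(k\lambda_{h,m})^{j(\sigma)}-\ee^{-\sigma\lambda_{h,m}}|$ amounts to the same computation and handles a partial last interval for free.)

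The one genuine gap is part \emph{(i)} at $\rho=1$. You propose to handle it ``by the same route as in Lemma \ref{lem:Fh2}\emph{(i)} via $A_h^{-1}P_h=R_hA^{-1}$'', but that route hinges on the fundamental theorem of calculus applied to $\frac{\mathrm{d}}{\mathrm{d}\sigma}E_h(\sigma)$ and $\frac{\mathrm{d}}{\mathrm{d}\sigma}F_h(\sigma)$, which has no analogue for the piecewise constant map $\sigma\mapsto E_{kh}(\sigma)$ (its derivative vanishes a.e.); moreover $A_h^{-1}P_h=R_hA^{-1}$ is a device for comparing $A_h$ with $A$, whereas the defect $(E_{kh}-E_h)P_h$ lives entirely in $S_h$. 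What is actually needed --- and what the paper does --- is to rerun your spectral two-regime estimate with the weight $\lambda_{h,m}^{1/2}$ pulled out, i.e.\ to show $k\sum_{j=1}^{n}\lambda_{h,m}^{1/2}\big|R(k\lambda_{h,m})^j-\ee^{-\lambda_{h,m}t_j}\big|\le Ck^{1/2}$ uniformly in $m$ (and similarly for the $\ee^{-\lambda_{h,m}t_j}-\ee^{-\lambda_{h,m}\sigma}$ contribution), then conclude with $\|A_h^{-1/2}P_hx\|\le\|x\|_{-1}$ from \eqref{eq3:discnorm}. Note that the crude uniform bound $\int_0^t|\cdots|\,\mathrm{d}\sigma\le Ck$ from your $\rho=0$ argument does not suffice here: multiplied by $\lambda_{h,m}^{1/2}$ it gives $k\lambda_{h,m}^{1/2}$, which exceeds $k^{1/2}$ exactly in the regime $k\lambda_{h,m}>1$, so the large-eigenvalue regime must be re-examined with the weight in place (there one gets $\sum_j\int|\cdots|\,\mathrm{d}\sigma\le C\lambda_{h,m}^{-1}$, whence $\lambda_{h,m}^{-1/2}<k^{1/2}$). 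Since you deploy precisely this weighted Parseval mechanism in part \emph{(ii)}, the repair is within reach, but as written the $\rho=1$ case of \emph{(i)} does not go through.
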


\begin{proof}
  The proof of \emph{(i)} uses a similar technique as the
  proof of Lemma \ref{lem:Fkh1} \emph{(iii)}. First, without loss of
  generality, we can assume that $t = t_n$ for some $n \ge 0$. In fact, if $t_n
  < t < t_{n+1}$ then we have
  \begin{align*}
     \Big\| \int_{0}^{t} F_{kh}( \sigma) x \ds \Big\| \le \Big\| \int_{0}^{t_n}
     F_{kh}( \sigma) x \ds \Big\| + \Big\| \int_{t_n}^{t}
     F_{kh}( \sigma) x \ds \Big\|.
  \end{align*}
  For the second term we get by Lemma \ref{lem:Fkh1} \emph{(iii)}  
  \begin{align*}
    \Big\| \int_{t_n}^{t} F_{kh}( \sigma) x \ds \Big\| &\le \Big\|
    \int_{t_n}^{t} \big( F_{kh}( \sigma) - F_{kh}(t) \big) x \ds \Big\| +
    \Big\| \int_{t_n}^{t} F_{kh}(t) x \ds \Big\| \\
    &= \Big\| \int_{t_n}^{t} \big( E( \sigma) - E(t) \big) x \ds \Big\| +     
    (t - t_n) \big\| F_{kh}(t) x \big\| \\
    &\le \Big\| E(t_n) A^{\frac{\rho}{2}} \int_{t_n}^{t} E( \sigma - t_n )
    A^{-\frac{\rho}{2}} x \ds \Big\| + (t - t_n) \big\| 
    A^{\frac{\rho}{2}} E(t ) A^{-\frac{\rho}{2}} x \big\| \\    
    &\quad + C (t-t_n) \big(
    h^{2 - \rho} + k^{\frac{2 - \rho}{2}} \big) t^{-1} \big\| x
    \big\|_{-\rho}.
  \end{align*}
  We continue by applying Lemma \ref{lem:1} \emph{(iv)} and \emph{(i)}
  with $\nu = 
  \frac{\rho}{2}$ and the fact that $(t - t_n) t^{-1} \le 1$ which yields
  \begin{align*}
    \Big\| \int_{t_n}^{t} F_{kh}( \sigma) x \ds \Big\| & \le C \big(
    (t-t_n)^{1-\frac{\rho}{2}} + (t - t_n) t^{-\frac{\rho}{2}} + h^{2 - \rho} +
    k^{\frac{2 - \rho}{2}} \big) \big\| 
    x \big\|_{-\rho} \\
    &\le C \big(
    h^{2- \rho} +  k^{\frac{2 - \rho}{2}} \big) \big\| x  \big\|_{-\rho} . 
  \end{align*}  
  Next, we have
  \begin{multline*}
    \Big\| \int_{0}^{t_n} F_{kh}( \sigma) x \ds \Big\| \le \Big\|
    \int_{0}^{t_n} \big( E_{kh}( \sigma) - E_h(\sigma) \big) P_h x \ds \Big\|\\
    + \Big\|
    \int_{0}^{t_n} \big( E_{h}( \sigma) P_h - E(\sigma) \big) x \ds \Big\|.    
  \end{multline*}
  For the second term Lemma \ref{lem:Fh2} \emph{(i)} yields the bound
  \begin{align*}
    \Big\|
    \int_{0}^{t_n} \big( E_{h}( \sigma) P_h - E(\sigma) \big)x \ds \Big\| \le C
    h^{2 - \rho} \| x \|_{-\rho}.    
  \end{align*}
  Thus, it is enough to show that 
  \begin{align*}
    \Big\|
    \int_{0}^{t_n} \big( E_{kh}( \sigma) - E_h(\sigma) \big) P_h x \ds \Big\|
    \le C k^{\frac{2- \rho}{2}} \big\| x \big\|_{-\rho},
  \end{align*}
  where the constant $C = C(\rho)$ is independent of $h,k \in (0,1]$, $t>0$,
  and $x \in \dot{H}^{-\rho}$. 
  
  We plug in the definition of $E_{kh}$ and obtain
  \begin{align}
    \label{eq3:est1}
    \begin{split}
      \Big\|
      \int_{0}^{t_n} \big( E_{kh}( \sigma) - E_h(\sigma) \big) P_h x \ds \Big\|
      &\le 
      \Big\| \sum_{j = 1}^n \int_{t_{j-1}}^{t_j} \big( R(k A_h)^j - E_h(t_j)
      \big) P_h x \ds \Big\|\\
      &\quad + \Big\| \sum_{j = 1}^n \int_{t_{j-1}}^{t_j} \big(
      E_h(t_j) - E_h(\sigma) \big) P_h x \ds \Big\|.    
    \end{split}
  \end{align}
  As in \eqref{eq3:r} let $(\lambda_{h,m})_{m=1}^{N_h}$ be the positive
  eigenvalues of $A_h$ with 
  cor\-responding orthonormal eigenvectors $(\varphi_{h,m})_{m=1}^{N_h} \subset
  S_h$. Then, Parseval's identity yields for the first summand 
  \begin{align*}
    &\Big\| \sum_{j = 1}^n \int_{t_{j-1}}^{t_j} \big( R(k A_h)^j - E_h(t_j)
    \big) P_h x \ds \Big\|^2 \\
    &\quad = \sum_{m = 1}^{N_h} \Big| k \sum_{j = 1}^n 
    \big( R(k \lambda_{h,m} )^j - \ee^{-\lambda_{h,m} t_j} \big) \Big|^2 \big(
    P_h x, \varphi_{h,m} \big)^2 \\   
    &\quad \le \sum_{m = 1}^{N_h} \Big( k \sum_{j = 1}^n
    \lambda_{h,m}^{\frac{\rho}{2}}
    \big| R(k \lambda_{h,m} )^j - \ee^{-\lambda_{h,m} t_j} \big| \Big)^2 \big(
    A_h^{-\frac{\rho}{2}} P_h x, \varphi_{h,m} \big)^2.
  \end{align*}
  As in the proof of Lemma \ref{lem:Fkh1} \emph{(iii)} we first study all
  summands with $k \lambda_{h,m} \le 1$. In this case \eqref{eq3:consr} gives
  \begin{align*}
    k \sum_{j = 1}^n
    \lambda_{h,m}^{\frac{\rho}{2}}
    \big| R(k \lambda_{h,m} )^j - \ee^{-\lambda_{h,m} t_j} \big| &\le C k 
    \sum_{j = 1}^n \lambda_{h,m}^{\frac{\rho}{2}} j \big(k \lambda_{h,m}\big)^2
    \ee^{-c(j-1)k \lambda_{h,m}} \\
    &= C \lambda_{h,m}^{\frac{\rho + 4}{2}} \ee^{ck \lambda_{h,m}} k^2 \sum_{j =
    1}^n j k \ee^{-cjk \lambda_{h,m}} \\
    &\le C \lambda_{h,m}^{\frac{\rho + 4}{2}} k \int_{0}^{\infty} (\sigma + k)
    \ee^{-c \lambda_{h,m} \sigma} \ds \\
    & \le C \lambda_{h,m}^{\frac{\rho + 4}{2}} k \Big( \frac{1}{(c
    \lambda_{h,m})^{2}} + \frac{k}{c \lambda_{h,m}} \Big) \le
    C k^{\frac{2 - \rho}{2}}.
  \end{align*}
  For all summands with $k \lambda_{h,m} > 1$ we have the estimates
  \begin{align*}
    & k \sum_{j = 1}^n
    \lambda_{h,m}^{\frac{\rho}{2}}
    \big| R(k \lambda_{h,m} )^j - \ee^{-\lambda_{h,m} t_j} \big|\\ &\quad <
    k^{\frac{2 - \rho}{2}} \sum_{j = 1}^n k \lambda_{h,m} \big( \big| R(k
    \lambda_{h,m}) \big|^j + \ee^{-k\lambda_{h,m} j} \big)  \\ 
    &\quad \le k^{\frac{2 - \rho}{2}} \Big( \frac{k \lambda_{h,m}}{1 + c
    k\lambda_{h,m}} \sum_{j = 1}^n
      \big( 1 + c \big)^{-(j - 1)} +
    k \lambda_{h,m} \ee^{- k \lambda_{h,m}} \sum_{j = 1}^n \ee^{-(j-1)} \Big)
    \le C k^{\frac{2 - \rho}{2}}, 
  \end{align*}
  where we used \eqref{eq3:rdecay} and $\ee^{- k \lambda_{h,m}(j-1)} <
  \ee^{-(j-1)}$ for $k \lambda_{h,m} > 1$. Altogether, this proves
  \begin{multline}
    \label{eq3:term1}
  \Big\| \sum_{j = 1}^n \int_{t_{j-1}}^{t_j} \big( R(k A_h)^j - E_h(t_j)
  \big) P_h x \ds \Big\|^2 \\ \le C k^{2 - \rho} \sum_{j=1}^{N_h} \big(
    A_h^{-\frac{\rho}{2}} P_h x, \varphi_{h,m} \big)^2 = C k^{2 - \rho} \big\|
    A_h^{-\frac{\rho}{2}} P_h x \big\|^2.    
  \end{multline}
  In order to complete the proof of \emph{(i)} it remains to find an estimate
  for the second term in \eqref{eq3:est1}. By applying Parseval's identity in
  the same way as above we get
  \begin{align*}
    &\Big\| \sum_{j = 1}^n \int_{t_{j-1}}^{t_j} \big(
    E_h(t_j) - E_h(\sigma) \big) P_h x \ds \Big\|^2 \\
    &\quad = \sum_{m = 1}^{N_h} \Big|
    \sum_{j=1}^n \int_{t_{j-1}}^{t_j} \big( \ee^{-\lambda_{h,m} t_j} -
    \ee^{- \lambda_{h,m} \sigma} \big) \ds \Big|^2 \big(P_h x, \varphi_{h,m}
    \big)^2 \\
    &\quad = \sum_{m=1}^{N_h} \Big| \lambda_{h,m}^{\frac{\rho}{2}}
    \sum_{j=1}^{n} \ee^{- \lambda_{h,m} t_{j-1}} \int_{0}^{k} \big(
    \ee^{- k \lambda_{h,m}} - \ee^{- \lambda_{h,m}\sigma} \big) \ds
    \Big|^2 \big( A_{h}^{-\frac{\rho}{2}} P_h x, \varphi_{h,m} \big)^2.
  \end{align*}
  Since it holds that
  \begin{align*}
    \int_{0}^{k} \big(\ee^{- k \lambda_{h,m}} - \ee^{- \lambda_{h,m}\sigma}
    \big) 
    \ds &= k \ee^{-k \lambda_{h,m}} - \frac{1}{\lambda_{h,m}} \big( 1 -
    \ee^{- k \lambda_{h,m}} \big)
  \end{align*}
  we have
  \begin{align*}
    &\Big| \lambda_{h,m}^{\frac{\rho}{2}}
    \sum_{j=1}^{n} \ee^{- \lambda_{h,m} t_{j-1}} \int_{0}^{k} \big(
    \ee^{- k \lambda_{h,m}} - \ee^{- \lambda_{h,m}\sigma} \big) \ds
    \Big|^2 \\
    &\quad = \Big| \lambda_{h,m}^{\frac{\rho}{2}} \Big( k \ee^{-k \lambda_{h,m}}
    - \frac{1}{\lambda_{h,m}} \big( 1 - \ee^{- k \lambda_{h,m}} \big) \Big)
    \sum_{j=1}^n \ee^{- k \lambda_{h,m}(j-1)} \Big|^2 \\
    &\quad = \lambda_{h,m}^{\rho - 2} \big| k \lambda_{h,m} \ee^{-k
    \lambda_{h,m}} - \big( 1 - \ee^{- k \lambda_{h,m}} \big)
    \big|^2  \big(1 - \ee^{-k \lambda_{h,m}} \big)^{-2}.
  \end{align*}
  Further, if $k \lambda_{h,m} \le 1$ then it is true that
  \begin{align*}
    \big| k \lambda_{h,m} \ee^{-k \lambda_{h,m}}
    - \big( 1 - \ee^{- k \lambda_{h,m}} \big)
    \big|^2 &= \ee^{- 2 k \lambda_{h,m}} \big| \ee^{k \lambda_{h,m}} - 1 - k
    \lambda_{h,m} \big|^2 \\
    &\le C k^4 \lambda_{h,m}^4.
  \end{align*}
  Thus, in this case we derive the estimate
  \begin{multline*}
    \Big| \lambda_{h,m}^{\frac{\rho}{2}}
    \sum_{j=1}^{n} \ee^{- \lambda_{h,m} t_{j-1}} \int_{0}^{k} \big(
    \ee^{- k \lambda_{h,m}} - \ee^{- \lambda_{h,m}\sigma} \big) \ds
    \Big|^2 \\
    \le C k^{2 - \rho} \big( k \lambda_{h,m} \big)^{\rho}
    \frac{\lambda_{h,m}^2 k^2}{\big(1 - \ee^{-k \lambda_{h,m}} \big)^2} \le C
    k^{2-\rho},
  \end{multline*}
  where we have used that the function $x \mapsto x(1- \ee^{-x})^{-1}$ is
  bounded for all $x \in (0,1]$.
  %In fact, this holds since 
  %\begin{align*}
  %  \lim_{x \to 0} \frac{x}{1 - \ee^{-x}} = \lim_{x \to 0} \frac{1}{\ee^{-x}} =
  %  1. 
  %\end{align*}
  On the other hand, if $k \lambda_{h,m} > 1$ then we have
  \begin{align*}
    & \Big| \lambda_{h,m}^{\frac{\rho}{2}}
    \sum_{j=1}^{n} \ee^{- \lambda_{h,m} t_{j-1}} \int_{0}^{k} \big(
    \ee^{- k \lambda_{h,m}} - \ee^{- \lambda_{h,m}\sigma} \big) \ds
    \Big|^2 \\
    &\quad \le 2 \lambda_{h,m}^{\rho - 2} \Big( \big| k \lambda_{h,m}
    \ee^{-k \lambda_{h,m}} \big|^2 + \big| 1 - \ee^{-k \lambda_{h,m}} \big|^2
    \Big)
    \big(1 - \ee^{-k\lambda_{h,m}} \big)^{-2} \le C k^{2-\rho}, 
  \end{align*}
  since $\lambda_{h,m}^{\rho - 2} < k^{2 - \rho}$, $\sup_{x \ge 0} x \ee^{-x} <
  \infty$ and $(1- \ee^{-k \lambda_{h,m}})^{-2} \le (1 - \ee^{-1})^{-2}$. 
  Altogether, this yields 
  \begin{align*}
    \Big\| \sum_{j = 1}^n \int_{t_{j-1}}^{t_j} \big(
    E_h(t_j) - E_h(\sigma) \big) P_h x \ds \Big\|^2 &
    \le C k^{2-\rho} \sum_{m=1}^{N_h} \big( A_h^{-\frac{\rho}{2}} P_h x,
    \varphi_{h,m} \big)^2 \\
    &\le C k^{2 - \rho} \big\| A_h^{-\frac{\rho}{2}} P_h x
    \big\|^2,    
  \end{align*}
  which in combination with \eqref{eq3:term1} and \eqref{eq3:discnorm}
  completes the proof of \emph{(i)} for $\rho \in \{ 0, 1 \}$. The intermediate
  cases follow again by interpolation.

  As above we begin the proof of \emph{(ii)} with the remark that without loss
  of generality we can assume that $t = t_n$ for some $n \ge 0$. In a
  similar way as in the proof of \emph{(i)} we have
  \begin{align*}
    \Big( \int_{t_n}^{t} \big\| F_{kh}(\sigma) x \big\|^2 \ds
    \Big)^{\frac{1}{2}} \le \Big( \int_{t_n}^{t} \big\| \big( E(\sigma) - E(t)
    \big) x \big\|^2 \ds \Big)^{\frac{1}{2}} + (t -
    t_n)^{\frac{1}{2}} \big\| F_{kh}(t) x \big\|.
  \end{align*}
  For the second term Lemma \ref{lem:Fkh1} \emph{(i)} with $\mu = 1+ \rho$ and
  $\nu = \rho$ together with $(t-t_n)t^{-1} \le 1$ gives the desired estimate.
  The first summand is estimated by Lemma \ref{lem:1} \emph{(ii)} which gives
  \begin{align*}
    \Big( \int_{t_n}^{t} \big\| \big( E(\sigma) - E(t)
    \big) x \big\|^2 \ds \Big)^{\frac{1}{2}} &= \Big( \int_{t_n}^{t} \big\|
    E(\sigma) A^{- \frac{\rho}{2}} \big( I - E(t - \sigma) \big)
    A^{\frac{\rho}{2}} x \big\|^2 \ds \Big)^{\frac{1}{2}} \\
    &\le \Big( \int_{t_n}^{t} (t- \sigma)^{\rho} \ds
    \Big)^{\frac{1}{2}} \big\| x \big\|_{\rho} \le C k^{\frac{1 + \rho}{2}}
    \big\| x \big\|_{\rho}. 
  \end{align*}
  Further, we have
  \begin{multline*}
    \Big( \int_{0}^{t_n} \big\| F_{kh}(\sigma) x \big\|^2 \ds
    \Big)^{\frac{1}{2}} \le \Big( \int_{0}^{t_n} \big\| \big( E_{kh}(\sigma) -
    E_h(\sigma) \big) P_h x \big\|^2 \ds \Big)^{\frac{1}{2}} \\
    +  \Big( \int_{0}^{t_n} \big\| \big( E_{h}(\sigma) P_h - 
    E(\sigma) \big) x \big\|^2 \ds \Big)^{\frac{1}{2}}
  \end{multline*}
  and Lemma \ref{lem:Fh2} \emph{(ii)} yields an estimate for the second summand
  of the form
  \begin{align*}
    \Big( \int_{0}^{t_n} \big\| \big( E_{h}(\sigma) P_h - 
    E(\sigma) \big) x \big\|^2 \ds \Big)^{\frac{1}{2}} \le C h^{1 + \rho}
    \big\| x \big\|_\rho.    
  \end{align*}
  Thus it remains to show
  \begin{align*}
    \Big( \int_{0}^{t_n} \big\| \big( E_{kh}(\sigma) -
    E_h(\sigma) \big) P_h x \big\|^2 \ds \Big)^{\frac{1}{2}} \le C
    k^{\frac{1+ \rho}{2}} \big\| x \big\|_{\rho}.
  \end{align*}
  As above, we prove this estimate for $\rho \in \{0,1\}$. The intermediate
  cases follow again by interpolation. By the definition of $E_{kh}$ we obtain
  the analogue of \eqref{eq3:est1}, namely
  \begin{align}
    \label{eq3:est2}
    \begin{split}
      &\Big( \int_{0}^{t_n} \big\| \big( E_{kh}(\sigma) - E_h(\sigma) \big) P_h
      x \big\|^2 \ds \Big)^{\frac{1}{2}}\\
      &\quad \le \Big( \sum_{j=1}^n
      \int_{t_{j-1}}^{t_j} \big\| \big( R(k A_h)^j - E_h(t_j) \big) P_h x
      \big\|^2 \ds \Big)^{\frac{1}{2}} \\
      &\qquad + \Big( \sum_{j=1}^n \int_{t_{j-1}}^{t_j} \big\| \big( E_h(t_j) -
      E_h(\sigma) \big) P_h x \big\|^2 \ds \Big)^{\frac{1}{2}}.
    \end{split}
  \end{align}
  For the square of the first summand we again apply Parseval's identity with
  respect to the orthonormal eigenbasis $(\varphi_{h,m})_{m = 1}^{N_h} \subset
  S_h$ of $A_h$ with corresponding eigenvalues
  $(\lambda_{h,m})_{m=1}^{N_h}$ and get
  \begin{align*}
      & \sum_{j=1}^n
      \int_{t_{j-1}}^{t_j} \big\| \big( R(k A_h)^j - E_h(t_j) \big) P_h x
      \big\|^2 \ds\\
      &\quad = \sum_{j=1}^n k \sum_{m = 1}^{N_h}
      \lambda_{h,m}^{-\rho} \big| R(k
      \lambda_{h,m})^j - \ee^{-k \lambda_{h,m}j} \big|^2 \big(
      A_{h,m}^{\frac{\rho}{2}} P_h x,
      \varphi_{h,m} \big)^2. 
  \end{align*}
  For all summands with $k \lambda_{h,m} \le 1$ we apply \eqref{eq3:consr}.
  This yields
  \begin{align}
    \label{eq3:term5}
    \begin{split}
      & \sum_{j=1}^n
      \int_{t_{j-1}}^{t_j} \big\| \big( R(k A_h)^j - E_h(t_j) \big) P_h x
    \big\|^2 \ds\\
    & \quad \le C \sum_{m = 1}^{N_h} k \lambda_{h,m}^{-\rho} \sum_{j= 1}^{n}
    j^2 \big(k \lambda_{h,m}\big)^4 \ee^{- 2c(j-1) k \lambda_{h,m}} \big(
    A_h^{\frac{\rho}{2}} P_h x, \varphi_{h,m} \big)^2\\
    & \quad \le C \sum_{m = 1}^{N_h} k^2 \lambda_{h,m}^{4 - \rho} \ee^{2ck
    \lambda_{h,m}} \int_{0}^{\infty} ( \sigma + k)^2 \ee^{-2c \lambda_{h,m}
    \sigma} \ds \big( A_h^{\frac{\rho}{2}} P_h x, \varphi_{h,m} \big)^2\\
    & \quad \le C \sum_{m = 1}^{N_h} k^2 \lambda_{h,m}^{4 - \rho} \Big(
    \frac{2}{(2c \lambda_{h,m})^3} + \frac{2k}{(2c \lambda_{h,m})^2} +
    \frac{k^2}{2 c \lambda_{h,m}} \Big) \big( A_h^{\frac{\rho}{2}} P_h x,
    \varphi_{h,m} \big)^2\\
    & \quad \le C k^{1+ \rho} \big\| A_h^{\frac{\rho}{2}} P_h x \big\|^2.
  \end{split}
  \end{align}
  For the remaining summands with $k \lambda_{h,m} > 1$ we use
  \eqref{eq3:rdecay} and get the estimate
  \begin{align*}
    \sum_{j=1}^n \big| R(k \lambda_{h,m})^j - \ee^{- k\lambda_{h,m} j} \big|^2 &
    \le 2 \sum_{j=1}^n \big( ( 1 + c )^{-2j} + \ee^{-2j} \big) \le C, 
  \end{align*}
  where the bound $C$ is independent of $n$. Hence, also in this case we have
  \begin{align}
    \label{eq3:term6}
    \begin{split}
    & \sum_{j=1}^n
    \int_{t_{j-1}}^{t_j} \big\| \big( R(k A_h)^j - E_h(t_j) \big) P_h x
    \big\|^2 \ds\\
    & \le C \sum_{m = 1}^{N_h} k \lambda_{h,m}^{-\rho} \big(
    A_h^{\frac{\rho}{2}} P_h x, \varphi_{h,m} \big)^2 < C k^{1 + \rho} \big\|
    A_h^{\frac{\rho}{2}} P_h x \big\|^2.   
  \end{split}
  \end{align}
  Next, we prove a similar result for the square of the second summand in
  \eqref{eq3:est2}. As in the proof of part \emph{(i)} an application of
  Parseval's identity yields
  \begin{align*}
    &\sum_{j = 1}^{n} \int_{t_{j-1}}^{t_j} \big\| \big(E_h(t_j) - E_h(\sigma)
    \big) P_h x \big\|^2 \ds\\
    &\quad = \sum_{j = 1}^{n} \sum_{m = 1}^{N_h} \lambda_{h,m}^{-\rho}
    \int_{t_{j-1}}^{t_j} 
     \big| \ee^{-k\lambda_{h,m}j} - \ee^{-\lambda_{h,m}\sigma}
    \big|^2 \ds \big(A_{h}^{\frac{\rho}{2}} P_h x, \varphi_{h,m} \big)^2.
  \end{align*}
  By using the fact
  \begin{align*}
    \int_{t_{j-1}}^{t_j} \big| \ee^{- k \lambda_{h,m} j} - \ee^{-
    \lambda_{h,m} \sigma} \big|^2 \ds \le \ee^{-2 k \lambda_{h,m}(j-1)} k \big(
    1 - \ee^{-k \lambda_{h,m}} \big)^2
  \end{align*}
  we obtain 
  \begin{align*}
    &\sum_{j = 1}^{n} \int_{t_{j-1}}^{t_j} \big\| \big(E_h(t_j) - E_h(\sigma)
    \big) P_h x \big\|^2 \ds\\
    &\quad \le \sum_{m = 1}^{N_h} \lambda_{h,m}^{-\rho} k \big(1 -
    \ee^{-k \lambda_{h,m}} \big)^2 \big(A_{h}^{\frac{\rho}{2}} P_h x,
    \varphi_{h,m} \big)^2\sum_{j=1}^{n} 
    \ee^{-2 k \lambda_{h,m}(j-1)} \\
    &\quad \le \sum_{m = 1}^{N_h} \lambda_{h,m}^{-\rho} k \big(1 -
    \ee^{-k \lambda_{h,m}} \big)^2 \big(A_{h}^{\frac{\rho}{2}} P_h x,
    \varphi_{h,m} \big)^2 \big( 1 - \ee^{-2k \lambda_{h,m}} \big)^{-1}.
  \end{align*}
  Since $1 - \ee^{-2k \lambda_{h,m}} = (1 + \ee^{-k \lambda_{h,m}}) ( 1 -
  \ee^{-k \lambda_{h,m}})$  we conclude
  \begin{align*}
    &\sum_{j = 1}^{n} \int_{t_{j-1}}^{t_j} \big\| \big(E_h(t_j) - E_h(\sigma)
    \big) P_h x \big\|^2 \ds\\
    &\quad \le \sum_{m = 1}^{N_h} \lambda_{h,m}^{-\rho} k \big(1 -
    \ee^{-k \lambda_{h,m}} \big)
    \big(A_{h}^{\frac{\rho}{2}} P_h x, \varphi_{h,m} \big)^2 \big( 1 + \ee^{-k
    \lambda_{h,m}} \big)^{-1}\\ 
    &\quad \le \frac{1}{2} k \sum_{m = 1}^{N_h}
    \lambda_{h,m}^{-\rho} \big(1 - \ee^{-k \lambda_{h,m}} \big)
    \big(A_{h}^{\frac{\rho}{2}} P_h x, \varphi_{h,m} \big)^2 
  \end{align*}
  If $\rho = 0$ this simplifies to $\frac{1}{2} k \| P_h x \|^2$ since $1-
  \ee^{-k \lambda_{h,m}} \le 1$. If $\rho = 1$ then we use $1 - \ee^{-k
  \lambda_{h,m}} \le k \lambda_{h,m}$ and the right hand side is bounded by
  $\frac{1}{2} k^2 \| A_h^{\frac{1}{2}} P_h x \|^2$. Altogether this proves
  \begin{align}
    \label{eq3:term7}
    \sum_{j = 1}^{n} \int_{t_{j-1}}^{t_j} \big\| \big(E_h(t_j) - E_h(\sigma)
    \big) P_h x \big\|^2 \ds \le C k^{1 + \rho} \big\| A_h^{\frac{1}{2}} P_h x
    \big\|^2.    
  \end{align}
  Finally, a combination of \eqref{eq3:est2} with \eqref{eq3:term5},
  \eqref{eq3:term6} and \eqref{eq3:term7} gives
  \begin{align}
     \Big( \int_{0}^{t_n} \big\| \big( E_{kh}(\sigma) - E_h(\sigma) \big) P_h
     x \big\|^2 \ds \Big)^{\frac{1}{2}} \le C k^{\frac{1+ \rho}{2}} \big\|
     A_h^{\frac{\rho}{2}} P_h x \big\|    
    \label{eq3:term8}
  \end{align}
  which completes the proof of the case $\rho = 0$. For the case $\rho = 1$ we
  additionally use \eqref{eq3:normAh} and Assumption \ref{as:5} which yields
  \begin{align*}
    \big\| A_h^{\frac{1}{2}} P_h x \big\| = \big\| P_h x \big\|_{1} \le C \| x
    \|_{1}
  \end{align*}
  for all $x \in \dot{H}^1$. This completes the proof of the Lemma.
\end{proof}

\sect{Proof of Theorem \ref{th:semi}}
\label{sec:semi}
The aim of this section is to prove the strong convergence of the spatially 
semidiscrete approximation \eqref{eq4:mildsemi} to the mild solution
\eqref{eq1:mild} of \eqref{eq1:SPDE}. 

The following two lemmas are useful for the proof of Theorem \ref{th:semi}.
The first lemma is a special version of \cite[Lemma $7.2$]{daprato1992} and
is needed to estimate the stochastic integrals. The second is a
generalized version of Gronwall's lemma. A proof of this version can be
found in \cite{elliott1992} or in \cite[Lemma 7.1.1]{henry1981}.  

\begin{lemma}
  \label{lem:stochint}
  For any $p \in [2, \infty)$, $t \in [0,T]$ and for any $L_2^0$-valued
  predictable process $\Phi(\sigma)$, $\sigma \in [0,t]$, we have
  \begin{align*}
    \E \Big[ \Big\| \int_{0}^{t} \Phi(\sigma) \dWs \Big\|^p \Big] \le C(p) \E
    \Big[ \Big( \int_{0}^{t} \| \Phi(\sigma) \|^2_{L^0_2} \ds
    \Big)^{\frac{p}{2}} \Big].
  \end{align*}
  Here the constant $C(p)$ can be chosen to be
  \begin{align*}
    C(p) = \Big( \frac{p}{2}(p-1)\Big)^{\frac{p}{2}} \Big(
    \frac{p}{p-1}\Big)^{p(\frac{p}{2}-1)}.
  \end{align*}
\end{lemma}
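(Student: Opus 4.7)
My plan is to prove this as a standard Burkholder--Davis--Gundy type bound via It\^o's formula in Hilbert space combined with Doob's maximal inequality, the key being to track constants carefully to arrive at the explicit $C(p)$. Set $M(t):=\int_0^t \Phi(\sigma)\dWs\in H$. A standard localization (stopping when the quadratic variation $\int_0^s\|\Phi\|_{L^0_2}^2\ds$ first exceeds $n$, then sending $n\to\infty$ by monotone convergence) reduces to the case where $M$ is a genuine $L^p$-martingale with all moments finite.

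The function $F(x):=\|x\|^p$ is $C^2$ on $H$ for $p\ge 2$, with Hessian
$F''(x)[y,y]=p(p-2)\|x\|^{p-4}(x,y)^2+p\|x\|^{p-2}\|y\|^2,$
which extends continuously through the origin since $p\ge 2$. Apply It\^o's formula to $F(M(t))$. The stochastic integral term is a true martingale and has zero expectation, while the trace term, expanded in an orthonormal basis $(\psi_m)$ of $U_0$, is bounded by the elementary estimate $(M,\Phi\psi_m)^2\le\|M\|^2\|\Phi\psi_m\|^2$, which collapses both summands of the Hessian to a single $\|M\|^{p-2}\|\Phi\|_{L^0_2}^2$ factor with combined coefficient $p(p-2)+p=p(p-1)$. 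Taking expectation yields the core inequality
$\E\bigl[\|M(t)\|^p\bigr]\le \tfrac{p(p-1)}{2}\,\E\Bigl[\int_0^t\|M(s)\|^{p-2}\|\Phi(s)\|_{L^0_2}^2\,\ds\Bigr].$

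Next, dominate $\|M(s)\|^{p-2}$ by $\sup_{r\le t}\|M(r)\|^{p-2}$, pull it outside the time integral, and apply H\"older's inequality in $\omega$ with conjugate exponents $p/(p-2)$ and $p/2$. Invoking Doob's $L^p$ maximal inequality $\E[\sup_{r\le t}\|M(r)\|^p]\le(p/(p-1))^p\E[\|M(t)\|^p]$ produces the self-bounding estimate
$A\le \tfrac{p(p-1)}{2}\bigl(\tfrac{p}{p-1}\bigr)^{p-2}A^{(p-2)/p}B^{2/p},$
where $A:=\E[\|M(t)\|^p]$ and $B:=\E[(\int_0^t\|\Phi\|_{L^0_2}^2\ds)^{p/2}]$. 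Raising to the $p/2$-th power and using the identity $p(p-2)/2=p(p/2-1)$ yields $A\le C(p)B$ with precisely the claimed constant. The main obstacle is purely computational bookkeeping: H\"older and Doob must be interleaved in the correct order so that the exponents combine exactly to $(p/(p-1))^{p(p/2-1)}$. The case $p=2$, where $F''\equiv 2I$ and the chain of inequalities collapses to It\^o's isometry with $C(2)=1$, provides a useful consistency check.
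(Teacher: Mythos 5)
Your proposal is correct, but note that the paper does not actually prove this lemma: it is stated as ``a special version of [Da Prato--Zabczyk, Lemma 7.2]'' and the proof is delegated entirely to that reference. What you have written is, in effect, a self-contained reconstruction of the standard argument behind that citation (It\^o's formula for $\|x\|^p$, the bound $F''(x)[y,y]\le p(p-1)\|x\|^{p-2}\|y\|^2$, H\"older in $\omega$ with exponents $p/(p-2)$ and $p/2$, Doob's $L^p$ inequality, and the self-bounding step $A\le K A^{(p-2)/p}B^{2/p}\Rightarrow A\le K^{p/2}B$), and your constant bookkeeping is right: $K^{p/2}=\bigl(\tfrac{p}{2}(p-1)\bigr)^{p/2}\bigl(\tfrac{p}{p-1}\bigr)^{(p-2)p/2}$ with $(p-2)p/2=p(\tfrac{p}{2}-1)$ reproduces exactly the $C(p)$ claimed in the lemma, and the $p=2$ sanity check collapses to the It\^o isometry. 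The one point to tighten is the localization: stopping only when $\int_0^s\|\Phi\|^2_{L^0_2}\,\mathrm{d}\sigma$ exceeds $n$ bounds the quadratic variation but does not by itself guarantee a priori that $\E\bigl[\|M^{\tau_n}(t)\|^p\bigr]<\infty$, which you need in order to divide by $A^{(p-2)/p}$ and to know the It\^o integral term is a true martingale. The standard fix is to stop additionally when $\|M(s)\|$ first exceeds a level $R$ (the paths of the stochastic integral are continuous, so the stopped process is bounded by $R$), run the argument, and then let $R\to\infty$ and $n\to\infty$ by Fatou/monotone convergence. With that routine adjustment your proof is complete and delivers precisely the stated constant.
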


\begin{lemma}
  \label{lem:Gronwall}
  Let the function $\varphi \colon [0,T] \to \R$ be nonnegative and
  continuous. If 
  \begin{align}
    \varphi(t) = C_1 + C_2 \int_{0}^{t} (t-\sigma)^{-1+\beta}
    \varphi(\sigma) \ds    
    \label{eq4:1}
  \end{align}
  for some constants $C_1,C_2 \ge 0$ and $\beta > 0$ and for all $t \in
  (0,T]$ , then there exists
  a constant $C = C(C_2,T,\beta)$ such that
  \begin{align*}
    \varphi(t) \le C C_1, \text{ for all } t\in (0,T].
  \end{align*}
\end{lemma}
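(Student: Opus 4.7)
The plan is to iterate the integral relation \eqref{eq4:1} and recognize the resulting series as (a bound by) the Mittag--Leffler function. Introduce the linear operator $T$ acting on nonnegative continuous functions on $[0,T]$ by
\[
  (T\psi)(t) := C_2 \int_{0}^{t} (t-\sigma)^{-1+\beta} \psi(\sigma) \ds,
\]
so that \eqref{eq4:1} reads $\varphi = C_1 + T\varphi$. The first step would be to establish by induction on $n$ the kernel formula
\[
  (T^n \psi)(t) = \frac{(C_2 \Gamma(\beta))^n}{\Gamma(n\beta)} \int_{0}^{t} (t-\sigma)^{n\beta - 1} \psi(\sigma) \ds, \quad n \ge 1,
\]
which follows from Fubini together with the classical Beta integral identity $\int_\sigma^t (t-s)^{a-1}(s-\sigma)^{b-1}\,\mathrm{d}s = \Gamma(a)\Gamma(b)(t-\sigma)^{a+b-1}/\Gamma(a+b)$. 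In particular, inserting $\psi \equiv 1$ gives
\[
  (T^n 1)(t) = \frac{(C_2\Gamma(\beta))^n \, t^{n\beta}}{\Gamma(n\beta + 1)}.
\]

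Next, I would iterate the relation $\varphi = C_1 + T\varphi$ a total of $n$ times to obtain
\[
  \varphi(t) = C_1 \sum_{k=0}^{n-1} (T^k 1)(t) + (T^n \varphi)(t).
\]
Since $\varphi$ is continuous on the compact interval $[0,T]$, it attains a finite maximum $M := \sup_{s \in [0,T]} \varphi(s)$, and the remainder is controlled by
\[
  (T^n \varphi)(t) \le M \cdot \frac{(C_2\Gamma(\beta))^n \, t^{n\beta}}{\Gamma(n\beta + 1)} \to 0 \quad \text{as } n \to \infty,
\]
where the convergence to zero follows from Stirling's formula (the Gamma function grows faster than any geometric sequence). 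Passing to the limit yields
\[
  \varphi(t) \le C_1 \sum_{k=0}^{\infty} \frac{(C_2\Gamma(\beta))^k \, t^{k\beta}}{\Gamma(k\beta + 1)} = C_1 \, E_\beta\!\bigl(C_2 \Gamma(\beta)\, t^\beta\bigr),
\]
and since $E_\beta$ is entire and monotone increasing on $[0,\infty)$, the right-hand side is bounded on $[0,T]$ by $C := E_\beta\!\bigl(C_2 \Gamma(\beta)\, T^\beta\bigr)$, which depends only on $C_2$, $T$, and $\beta$.

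The only genuinely delicate point I anticipate is the induction step for the kernel formula: one has to apply Fubini carefully to the double integral that arises in $T(T^{n-1}\psi)$, substitute the inductive hypothesis, and track the Gamma factors through the Beta identity to verify the index shift from $\Gamma(n\beta)$ to $\Gamma((n+1)\beta)$. Once this identity is in hand, everything else is routine: the continuity of $\varphi$ furnishes the a priori bound $M$ needed to annihilate the remainder, and the entirety of $E_\beta$ delivers the constant $C(C_2,T,\beta)$ without further analysis.
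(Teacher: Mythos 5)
Your argument is correct: the kernel iteration formula, the Beta-function identity, the vanishing of the remainder $T^n\varphi$ via the a priori bound $M=\sup\varphi$, and the summation to the Mittag--Leffler function $E_\beta$ all check out, and the resulting constant depends only on $C_2$, $T$, $\beta$ as required. The paper does not prove this lemma itself but defers to \cite{elliott1992} and \cite[Lemma 7.1.1]{henry1981}, and the proof given there is exactly this iteration-and-series argument, so your proposal reproduces the standard cited proof.
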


After these preparations we are ready to prove the first main theorem.

\begin{proof}[Proof of Theorem \ref{th:semi}]
  For $t \in (0,T]$ we have by \eqref{eq1:mild} and \eqref{eq4:mildsemi} 
  \begin{align}
    \label{eq4:main}
    \begin{split}
      &\big\| X_h(t) - X(t) \big\|_{\LOH} \le \big\| F_h(t) X_0 \big\|_{\LOH} \\
      &\quad + \Big\| \int_{0}^{t} E_h(t-\sigma) P_h f(X_h(\sigma)) \ds -
      \int_{0}^{t} E(t-\sigma) f(X(\sigma)) \ds \Big\|_{\LOH} \\
      &\quad + \Big\| \int_{0}^{t} E_h(t-\sigma) P_h g(X_h(\sigma)) \dWs -
      \int_{0}^{t} E(t-\sigma) g(X(\sigma)) \dWs \Big\|_{\LOH},
    \end{split}
  \end{align}
  where $F_h(t) = E_h(t) P_h - E(t)$. The first term is estimated by
  Lemma \ref{lem:Fh1} \emph{(i)} with $\mu = \nu = 1+r$, which yields
  \begin{align}
    \big\| F_h(t) X_0 \big\|_{\LOH} \le C h^{1+r} \big\|
    A^{\frac{1+r}{2}} X_0 \big\|_{\LOH}.    
    \label{eq4:I}
  \end{align}
  The second term in \eqref{eq4:main} is dominated by three additional terms as
  follows
  \begin{align*}
    & \Big\| \int_{0}^{t} E_h(t-\sigma) P_h f(X_h(\sigma)) \ds -
    \int_{0}^{t} E(t-\sigma) f(X(\sigma)) \ds \Big\|_{\LOH} \\
    &\quad \le \Big\| \int_{0}^{t} E_h(t-\sigma) P_h \big( f(X_h(\sigma)) -
    f(X(\sigma)) \big) \ds \Big\|_{\LOH} \\
    &\qquad + \Big\| \int_{0}^{t} \big( E_h(t-\sigma) P_h - E(t-\sigma) \big) 
    \big( f(X(\sigma)) - f(X(t)) \big) \ds \Big\|_{\LOH} \\
    &\qquad + \Big\| \int_{0}^{t} \big( E_h(t-\sigma) P_h - E(t-\sigma) \big) 
    f(X(t)) \ds \Big\|_{\LOH}\\
    &\quad =: I_1 + I_2 + I_3.
  \end{align*}
  We estimate each term separately. First note that, by interpolation
  between \eqref{eq3:smoothEh} and \eqref{eq3:smoothEh2}, we have $\|
  E_h(t) P_h x \| \le C t^{-\frac{1-r}{2}} \| x \|_{-1 + r}$. Together with
  Assumption \ref{as:2} this yields 
  \begin{align}
    \label{eq4:I1}
    \begin{split}
    I_1 &\le \int_{0}^{t} \big\|  E_h(t-\sigma) P_h \big( f(X_h(\sigma)) -
    f(X(\sigma)) \big) \big\|_{\LOH} \ds \\
    &\le C \int_{0}^{t} (t-\sigma)^{-\frac{1 - r}{2}} \big\| X_h(\sigma) -
    X(\sigma) \big\|_{\LOH} \ds.
  \end{split}
  \end{align} 
  The term $I_2$ is estimated by applying Lemma \ref{lem:Fh1} \emph{(iii)} with
  $\rho = 1 - r$, Assumption \ref{as:2} and \eqref{eq1:hoelder} with $s = 0$.
  Then we get 
  \begin{align}
    \begin{split}
      I_2 &\le \int_{0}^{t} \big\| F_h(t-\sigma) \big( f(X(\sigma)) -
      f(X(t)) 
      \big) \big\|_{\LOH} \ds \\
      &\le C h^{1+r} \int_{0}^{t} (t-\sigma)^{-1} \big\|X(\sigma) - X(t)
      \big\|_{\LOH} \ds \\
      &\le C h^{1+r} \int_{0}^{t} (t-\sigma)^{-1 + \frac{1}{2}}
      \ds \le C h^{1+r}. 
    \end{split}
    \label{eq4:I2}
  \end{align}
  Finally, the estimate for $I_3$ is a straightforward application of Lemma
  \ref{lem:Fh2} \emph{(i)} $\rho = 1 - r$. A further application of Assumption
  \ref{as:2} and \eqref{eq1:reg} gives 
  \begin{align}
    \begin{split}
      I_3 \le C h^{1+r} \big\| f(X(t)) \big\|_{-1+r} \le C h^{1+r} \Big(1 +
      \sup_{\sigma \in [0,T]} \big\| X(\sigma) \big\|_{\LOH} \Big) \le C
      h^{1+r}.
    \end{split}
    \label{eq4:I3}
  \end{align}
  The right hand side of this estimate is finite in view of
  \eqref{eq1:reg}. A combination of the estimates \eqref{eq4:I1},
  \eqref{eq4:I2}, and \eqref{eq4:I3} yields
  \begin{align}
    \begin{split}
    & \Big\| \int_{0}^{t} E_h(t-\sigma) P_h f(X_h(\sigma)) \ds -
    \int_{0}^{t} E(t-\sigma) f(X(\sigma)) \ds \Big\|_{\LOH} \\
    & \quad \le C h^{1+r} + C \int_{0}^{t} (t-\sigma)^{\frac{r-1}{2}} \big\|
    X_h(\sigma) - X(\sigma) \big\|_{\LOH} \ds.
    \end{split}
    \label{eq4:II}
  \end{align}
  Next, we estimate the norm of the stochastic integral in
  \eqref{eq4:main}. First, we apply Lemma \ref{lem:stochint} and get
  \begin{align*}
    &\Big\| \int_{0}^{t} E_h(t-\sigma) P_h g(X_h(\sigma)) \dWs -
    \int_{0}^{t} E(t-\sigma) g(X(\sigma)) \dWs \Big\|_{\LOH}\\
    &\quad \le C \Big( \E \Big[ \Big( \int_{0}^{t} \big\| E_h(t-\sigma) P_h
    g(X_h(\sigma)) - E(t-\sigma) g(X(\sigma)) \big\|^2_{L_2^0} \ds
    \Big)^{\frac{p}{2}} \Big] \Big)^{\frac{1}{p}}.    
  \end{align*}
  The right hand side is a norm. Hence, the triangle
  inequality gives 
  \begin{align*}
    &\Big( \E \Big[ \Big( \int_{0}^{t} \big\| E_h(t-\sigma) P_h
    g(X_h(\sigma)) - E(t-\sigma) g(X(\sigma)) \big\|^2_{L_2^0} \ds
    \Big)^{\frac{p}{2}} \Big] \Big)^{\frac{1}{p}}\\
    &\quad \le \Big\| \Big( \int_{0}^{t} \big\| E_h(t-\sigma) P_h \big(
    g(X_h(\sigma)) - g(X(\sigma)) \big) \big\|^2_{L_2^0} \ds
    \Big)^{\frac{1}{2}} \Big\|_{\LOR} \\
    &\qquad + \Big\| \Big( \int_{0}^{t} \big\| F_h(t-\sigma) \big(
    g(X(\sigma)) - g(X(t)) \big) \big\|^2_{L_2^0} \ds
    \Big)^{\frac{1}{2}} \Big\|_{\LOR} \\
    &\qquad + \Big\| \Big( \int_{0}^{t} \big\| F_h(t-\sigma) g(X(t))
    \big\|^2_{L_2^0} \ds \Big)^{\frac{1}{2}} \Big\|_{\LOR}\\
    &\quad =: I_4 + I_5 + I_6.
  \end{align*}
  In a similar way as for $I_1$, we find an estimate for $I_4$ by using the
  stability of the operator $E_h(t) P_h$, that is, \eqref{eq3:smoothEh} with
  $\rho = 0$. Together with Assumption \ref{as:1} we get
  \begin{align}
    \label{eq4:I4}
    \begin{split}
      I_4 &\le C \Big\| \Big( \int_{0}^{t} \big\| X_h(\sigma) - X(\sigma)
      \big\|^2 \ds \Big)^{\frac{1}{2}} \Big\|_{\LOR} \\
      &= C \left( \Big\| \int_{0}^{t} \big\| X_h(\sigma) - X(\sigma)
      \big\|^2 \ds \Big\|_{L^{p/2}(\Omega;\mathbb{R})} \right)^{\frac{1}{2}}\\
      &\le C \Big( \int_{0}^{t} \big\| X_h(\sigma) - X(\sigma)
      \big\|^2_{\LOH} \ds
      \Big)^{\frac{1}{2}}. 
    \end{split}   
  \end{align}
  For the estimate of term $I_5$ we apply Lemma \ref{lem:Fh1} \emph{(i)} with
  $\mu = 1+r$ 
  and $\nu = 0$, which gives $\| F_h(t)\| \le C h^{1+r}
  t^{-\frac{1+r}{2}}$. Additionally, we use the fact that $\| L M \|_{L^0_2}
  \le \| L \| \| M \|_{L_2^0}$ for any bounded linear operator $L\colon H \to
  H$ and Hilbert-Schmidt operator $M \in L^0_2$. The estimate is completed
  by making use of Assumption \ref{as:1} and the  
  H\"older-continuity \eqref{eq1:hoelder} with $s=0$. Altogether, we derive 
  \begin{align}
    \label{eq4:I5}
    \begin{split}
      I_5 &\le C h^{1+r} \Big\| \Big( \int_{0}^{t} (t-\sigma)^{-1-r} \big\|
      X(\sigma) - X(t) \big\|^2 \ds \Big)^{\frac{1}{2}} \Big\|_{\LOR}\\
      &\le C h^{1+r} \Big( \int_{0}^{t} (t-\sigma)^{-1-r} \big\| X(\sigma) -
      X(t) \big\|^2_{\LOH} \ds \Big)^{\frac{1}{2}}\\
      &\le C h^{1+r} \Big( \int_{0}^{t} (t-\sigma)^{-r} \ds
      \Big)^{\frac{1}{2}} \le C h^{1+r}.
    \end{split}
  \end{align}
  Note that in the last step the generic constant $C$ depends on $r \in [0,1)$
  and blows up as $r \to 1$.

  Finally, for $I_6$, let $(\varphi_m)_{m\ge1}$ denote an
  arbitrary orthonormal basis of the Hilbert space $U_0$. Then, by using Lemma
  \ref{lem:Fh2} \emph{(ii)} with $\rho = r$, Assumption \ref{as:1} and
  \eqref{eq1:reg}, we get
  \begin{align}
    \label{eq4:I6}
    \begin{split}
    I_6 &= \Big\| \Big( \sum_{m=1}^{\infty} \int_{0}^{t} \big\| F_h(t-\sigma)
    g(X(t)) \varphi_m \big\|^2 \ds \Big)^{\frac{1}{2}} \Big\|_{\LOR} \\
    &\le C h^{1+r} \Big\| \Big( \sum_{m=1}^{\infty} \big\|
    A^{\frac{r}{2}} g(X(t)) \varphi_m \big\|^2 \Big)^{\frac{1}{2}}
    \Big\|_{\LOR} \\
    &= C h^{1+r} \Big\|\, \big\| g(X(t)) \big\|_{L_{2,r}^0} \, \Big\|_{\LOR}\\
    &\le C h^{1+r} \Big( 1 +  \sup_{\sigma \in [0,T]} \big\| A^{\frac{r}{2}}
    X(\sigma) \big\|_{\LOH} \Big) \le C h^{1+r}.
  \end{split}
  \end{align}
  In total, we have by \eqref{eq4:I4}, \eqref{eq4:I5}, and \eqref{eq4:I6} that
  \begin{align}
    \label{eq4:III}
    \begin{split}
      &\Big\| \int_{0}^{t} E_h(t-\sigma) P_h g(X_h(\sigma)) \dWs -
    \int_{0}^{t} E(t-\sigma) g(X(\sigma)) \dWs \Big\|_{\LOH}\\
    &\quad\le C h^{1+r} + C \Big( \int_{0}^{t} \big\| X_h(\sigma) - X(\sigma)
      \big\|^2_{\LOH} \ds
      \Big)^{\frac{1}{2}}.
    \end{split}
  \end{align}
  Coming back to \eqref{eq4:main}, 
  by \eqref{eq4:I}, \eqref{eq4:II}, and \eqref{eq4:III} we conclude that
  \begin{align*}
    \| X_h(t) - X(t) \|^2_{\LOH} &\le C h^{2(1+r)} + C
    \int_{0}^{t} \big\| X_h(\sigma) - X(\sigma) \|^2_{\LOH} \ds\\
    & \quad + C \Big( \int_{0}^{t} (t-\sigma)^{-\frac{1-r}{2}} \big\|
    X_h(\sigma) - X(\sigma) \big\|_{\LOH} \ds \Big)^2.     
  \end{align*}
  Finally, we note that
  \begin{align*}
    &\int_{0}^{t} \big\| X_h(\sigma) - X(\sigma) \|^2_{\LOH} \ds\\
    &\quad = \int_{0}^{t}
    (t-\sigma)^{\frac{1 - r}{2}} (t-\sigma)^{-\frac{1-r}{2}} \big\| X_h(\sigma)
    - X(\sigma) \|^2_{\LOH} \ds \\
    &\quad \le T^{\frac{1-r}{2}} \int_{0}^{t}
    (t-\sigma)^{-\frac{1-r}{2}} \big\| X_h(\sigma) -
    X(\sigma) \|^2_{\LOH} \ds,    
  \end{align*}
  and by H\"older's inequality
  \begin{align*}
    &\int_{0}^{t} (t-\sigma)^{-\frac{1-r}{2}} \big\|
    X_h(\sigma) - X(\sigma) \big\|_{\LOH} \ds \\
    &\quad = \int_{0}^{t}
    (t-\sigma)^{-\frac{1-r}{4}} (t-\sigma)^{-\frac{1-r}{4}} \big\|
    X_h(\sigma) - X(\sigma) \big\|_{\LOH} \ds \\
    &\quad \le \Big( \frac{2}{r+1} T^{\frac{r+1}{2}}
    \Big)^{\frac{1}{2}} \Big( \int_{0}^{t} (t-\sigma)^{-\frac{1-r}{2}} \big\|
    X_h(\sigma) - X(\sigma) \|^2_{\LOH} \ds \Big)^{\frac{1}{2}}. 
  \end{align*}
  Hence, for $\varphi(t) = \big\|X_h(t) - X(t) \big\|^2_{\LOH}$ we have shown
  that
  \begin{align*}
    \varphi(t) \le C h^{2(1+r)} + C \int_{0}^{t}
    (t-\sigma)^{\frac{r-1}{2}} \varphi(\sigma) \ds
  \end{align*}
  and Lemma \ref{lem:Gronwall} completes the proof.
\end{proof}

\sect{Proof of Theorem \ref{th5:full}}
\label{sec:full}

This section is devoted to the proof of Theorem \ref{th5:full}. As in Section
\ref{sec:semi} the proof relies on a discrete 
version of Gronwall's Lemma. Here we use a variant from \cite[Lemma
7.1]{elliott1992}.

\begin{lemma}
  \label{lem:discGronwall}
  Let $C_1, C_2 \ge 0$ and let $(\varphi_j)_{j =
  1, \ldots, N_k}$ be a nonnegative sequence. 
  If for $\beta \in (0,1]$ we have
  \begin{align}
    \varphi_j \le C_1 + C_2 k \sum_{i = 1}^{j-1} t_{j-i}^{-1 + \beta} \varphi_i
    \quad \text{for all } j = 1,\ldots,N_k,
    \label{eqA:2}
  \end{align}
  then there exists a constant $C = C(C_2, T, \beta)$ such that
  \begin{align*}
    \varphi_j \le C C_1 \quad \text{ for all } j=1, \ldots, N_k.
  \end{align*}
  In particular, the constant $C$ does not depend on $k$.
\end{lemma}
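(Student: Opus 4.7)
The plan is to iterate the inequality (\ref{eqA:2}) and bound the resulting multiple sums by a discrete Beta-function estimate, producing a Mittag-Leffler-type series that converges uniformly in $k$. Direct induction on $j$ alone can fail whenever $C_2 T^\beta/\beta \ge 1$, so it is essential to tame the iterative constants through gamma-function denominators.

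After substituting (\ref{eqA:2}) into itself $m$ times one obtains $\varphi_j \le C_1 \sum_{n=0}^{m-1} \sigma_n(j) + R_m$, where $\sigma_0(j) := 1$,
\begin{equation*}
  \sigma_n(j) := C_2^n \sum_{1 \le j_n < \cdots < j_1 < j} k^n \prod_{l=1}^n t_{j_{l-1}-j_l}^{-1+\beta}, \qquad j_0 := j,
\end{equation*}
and $R_m$ collects the terms involving $\varphi_{j_m}$. Choosing $m = j-1$ forces the single admissible chain $j_l = j-l$, whence $j_{j-1}=1$; but $\varphi_1 \le C_1$ from (\ref{eqA:2}) at $j=1$ (empty right-hand sum), so $R_{j-1}$ absorbs into $C_1\,\sigma_{j-1}(j)$ and one is left with $\varphi_j \le C_1 \sum_{n=0}^{j-1} \sigma_n(j)$ without any a-priori control on $\varphi$.

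Next I would prove a discrete Beta-function estimate: for $\alpha \in (0,1]$, $\beta' \ge 1$, and $n \ge 2$,
\begin{equation*}
  k \sum_{m=1}^{n-1} t_{n-m}^{-1+\alpha} t_m^{-1+\beta'} \le C_0 \, B(\alpha,\beta') \, t_n^{-1+\alpha+\beta'},
\end{equation*}
where $C_0$ is an absolute constant. Writing $t_m = mk$ reduces this to $\sum_{m=1}^{n-1}(n-m)^{-1+\alpha} m^{-1+\beta'} \le C_0 B(\alpha,\beta') n^{-1+\alpha+\beta'}$, which follows by splitting at $m=\lfloor n/2\rfloor$ and in each half bounding the less singular factor by its monotone extreme and the other by its Riemann sum. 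Applied iteratively with $\alpha=\beta$ and $\beta' = n\beta+1$ to the recursion $\sigma_{n+1}(j) = C_2 k \sum_{j_1=1}^{j-1} t_{j-j_1}^{-1+\beta} \sigma_n(j_1)$, the identity $B(\beta,n\beta+1) = \Gamma(\beta)\Gamma(n\beta+1)/\Gamma((n+1)\beta+1)$ causes the $\Gamma(n\beta+1)$ in the inductive hypothesis to cancel, yielding
\begin{equation*}
  \sigma_n(j) \le \frac{\big(C_0 C_2 \Gamma(\beta)\big)^n}{\Gamma(n\beta+1)} \, t_j^{n\beta}.
\end{equation*}

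Summing therefore gives $\varphi_j \le C_1 \sum_{n=0}^{\infty} (C_0 C_2 \Gamma(\beta) T^\beta)^n/\Gamma(n\beta+1) = C_1 \, E_\beta\!\bigl(C_0 C_2 \Gamma(\beta) T^\beta\bigr)$, where $E_\beta$ is the Mittag-Leffler function of order $\beta$, which is entire; the resulting multiplier depends only on $C_2, T, \beta$, independently of $k$. The main technical obstacle is the telescoping of gamma-function constants in the inductive step: any slackness in the Beta-function estimate would leave spurious prefactors that destroy the $\Gamma(n\beta+1)$ denominator, on which the convergence of the Mittag-Leffler series—and hence the uniformity in $k$—depends.
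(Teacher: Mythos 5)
The paper does not prove Lemma \ref{lem:discGronwall} at all; it only cites \cite[Lemma 7.1]{elliott1992} (and, for the continuous counterpart, \cite[Lemma 7.1.1]{henry1981}). Your iteration-plus-Mittag-Leffler argument is precisely the standard proof underlying those references, so in substance you are reconstructing the intended route rather than inventing a new one. The global architecture is sound: the termination of the iteration at $m=j-1$ via the unique maximal chain and $\varphi_1\le C_1$ (empty sum in \eqref{eqA:2}) is correct, the recursion $\sigma_{n+1}(j)=C_2k\sum_{j_1=1}^{j-1}t_{j-j_1}^{-1+\beta}\sigma_n(j_1)$ is right, and the telescoping of the Gamma factors through $B(\beta,n\beta+1)=\Gamma(\beta)\Gamma(n\beta+1)/\Gamma((n+1)\beta+1)$ does yield $\sigma_n(j)\le (C_0C_2\Gamma(\beta))^n t_j^{n\beta}/\Gamma(n\beta+1)$ and hence the Mittag-Leffler bound, uniformly in $k$.

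The one step that does not survive scrutiny as written is your proof of the discrete Beta inequality. Splitting at $m=\lfloor n/2\rfloor$ and, on the near-singular half, bounding $m^{-1+\beta'}$ by $n^{-1+\beta'}$ and the Riemann sum of $(n-m)^{-1+\alpha}$ by $(n/2)^\alpha/\alpha$ produces the bound $2^{-\alpha}\alpha^{-1}n^{-1+\alpha+\beta'}$, whose ratio to $B(\alpha,\beta')n^{-1+\alpha+\beta'}$ behaves like $\Gamma(\alpha+\beta')/(\Gamma(\alpha+1)\Gamma(\beta'))\sim(\beta')^{\alpha}$ as $\beta'\to\infty$. Since you must apply the inequality with $\beta'=n\beta+1\to\infty$, this is not an absolute constant, and by your own (correct) remark such slackness accumulates to $\prod_{l}(l\beta+1)^{\beta}$, which cancels the $\Gamma(n\beta+1)$ denominator and breaks the unconditional convergence of the series. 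Fortunately the inequality itself is true with $C_0=1$ by a simpler observation: for $\alpha\in(0,1]$ and $\beta'\ge1$ the function $s\mapsto(n-s)^{-1+\alpha}s^{-1+\beta'}$ is nondecreasing on $(0,n)$, so each summand is dominated by $\int_m^{m+1}(n-s)^{-1+\alpha}s^{-1+\beta'}\,\mathrm{d}s$ and the whole sum by $\int_0^n(n-s)^{-1+\alpha}s^{-1+\beta'}\,\mathrm{d}s=B(\alpha,\beta')\,n^{-1+\alpha+\beta'}$. Replacing your splitting argument by this monotone comparison closes the proof with no loss of sharpness.
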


\begin{proof}[Proof of Theorem \ref{th5:full}]
  In terms of the rational function $R(k A_h)$, which was introduced in
  \eqref{eq3:r}, we derive the following discrete variation of constants
  formula for $X_h^j$
  \begin{align}
  \label{eq5:mild}
  X_h^j &= R(k A_h)^j P_h X_0 - k \sum_{i = 0}^{j-1} R(k A_h)^{j-i}
  P_h f(X_h^{i}) + \sum_{i = 0}^{j-1} R(k A_h)^{j-i} P_h g(X_h^i) \Delta
  W^{i+1}.
\end{align}

  By applying \eqref{eq1:mild} and \eqref{eq5:mild} we get for the error
  \begin{align}
    \label{eq5:1}
    \begin{split}
    &\| X_h^j - X(t_j) \|_{\LOH} \le \big\| R(k A_h)^j P_h X_0 - E(t_j) X_0
    \big\|_{\LOH} \\
    &\quad + \Big\| k \sum_{i = 0}^{j-1} R(k A_h)^{j -i} P_h f(X_h^i)
    - \int_{0}^{t_j} E(t_j - \sigma) f(X(\sigma)) \ds \Big\|_{\LOH} \\
    &\quad + \Big\| \sum_{i = 0}^{j-1} R(k A_h)^{j-i} P_h g(X_h^i) \Delta
    W^{i+1} - \int_{0}^{t_j} E(t_j - \sigma) g(X(\sigma)) \dWs
    \Big\|_{\LOH}.
  \end{split}
  \end{align}
  The first summand is the error of the fully discrete approximation scheme
  \eqref{eq3:homeq_disc} for the homogeneous equation \eqref{eq3:homeq} but with
  the initial value being a random variable. By Assumption \ref{as:3} we have
  that $X_0(\omega) \in \dot{H}^{1+r}$ for $P$-almost all $\omega \in \Omega$.
  Thus, the error estimate from \cite[Theorem 7.8]{thomee2006} yields
  \begin{align*}
    \big\| R(k A_h)^j P_h X_0 - E(t_j) X_0 \big\|_{\LOH} \le 
    C \big(h^{1+r} + k^{\frac{1+r}{2}} \big) \big\| A^{\frac{1+r}{2}} X_0
    \big\|_{\LOH}. 
  \end{align*}
  For the other two summands we introduce an auxiliary process which is given by
  \begin{align}
    \label{eq5:defXkh}
    \begin{split}
    X_{kh}(t) &:= X_h^{j-1}, \quad \text{if } t \in (t_{j-1}, t_j], \quad j = 1,
    \ldots, N_k,\\
    X_{kh}(0) &:= P_h X_0.
  \end{split}
  \end{align}
  By this definition $X_{kh}$ is an adapted and left-continuous process and,
  therefore, predictable \cite[p.~27]{roeckner2007}. Recalling the definition
  \eqref{eq3:defEkh} of 
  the family of operators $E_{kh}(t)$, $t \ge 0$, we obtain
  \begin{align}
    \begin{split}
    k \sum_{i=0}^{j-1} R(k A_h)^{j -i} P_h f(X_h^i) &= \sum_{i=0}^{j-1}
    \int_{t_i}^{t_{i+1}} E_{kh}(t_j - \sigma) P_h f( X_{kh}(\sigma)) \ds \\
    &= \int_{0}^{t_j} E_{kh}(t_j - \sigma) P_h f( X_{kh}(\sigma)) \ds
  \end{split}
  \label{eq5:int1}
  \end{align}
  and, analogously,
  \begin{align}
    \label{eq5:int2}
    \sum_{i=0}^{j-1} R(k A_h)^{j-i} P_h g(X_h^i) \Delta W^{i+1} =
    \int_0^{t_j} E_{kh}(t_j - \sigma) P_h g(X_{kh}(\sigma)) \dWs.    
  \end{align}
  By applying \eqref{eq5:int1} we have the following estimate of the second
  summand in \eqref{eq5:1}
  \begin{align*}
    &\Big\| k \sum_{i = 0}^{j-1} R(k A_h)^{j -i} P_h f(X_h^i)
    - \int_{0}^{t_j} E(t_j - \sigma) f(X(\sigma)) \ds \Big\|_{\LOH} \\
    &\quad = \Big\| \int_{0}^{t_j} \big( E_{kh}(t_j - \sigma) P_h f(
    X_{kh}(\sigma)) - E(t_j - \sigma) f(X(\sigma)) \big) \ds \Big\|_{\LOH} \\
    &\quad \le \Big\| \int_{0}^{t_j} E_{kh}(t_j - \sigma) P_h \big(
    f(X_{kh}(\sigma)) - f(X(\sigma)) \big) \ds \Big\|_{\LOH} \\
    &\quad \quad + \Big\| \int_{0}^{t_j} \big( E_{kh}(t_j - \sigma) P_h - E(t_j
    - \sigma) \big) \big( f(X(\sigma)) - f(X(t_j)) \big) \ds \Big\|_{\LOH}  \\
    &\quad \quad + \Big\| \int_{0}^{t_j} \big( E_{kh}(t_j - \sigma) P_h - E(t_j
    - \sigma) \big) f(X(t_j)) \ds \Big\|_{\LOH}  \\
    &\quad =: J_1 + J_2 + J_3.
  \end{align*}
  Note, that the terms $J_1$, $J_2$, and $J_3$ are of the same structure as the
  terms $I_1$, $I_2$, and $I_3$ in the proof of Theorem \ref{th:semi}. Since
  with Lemmas \ref{lem:Fkh1} and \ref{lem:Fkh2} we have the time discrete
  analogues of Lemmas \ref{lem:Fh1} and \ref{lem:Fh2} at our disposal the proof
  follows the same path.  
  
  For the term $J_1$ we first note that an interpolation between
  \eqref{eq3:smoothr} and \eqref{eq3:smoothEkh} yields $\big\| E_{kh}(t) P_h
  x \big\| \le C t_i^{-\frac{1-r}{2}} \big\| x \big\|_{-1+r}$ for all $t \in
  [t_{i-1},t_i)$ and all $x \in
  \dot{H}^{-1 + r}$. Together with Assumption \ref{as:2} this gives
  \begin{align*}
    \begin{split}
      J_1 &\le \int_{0}^{t_j} \big\| E_{kh}(t_j - \sigma) P_h \big(
      f(X_{kh}(\sigma)) - f(X(\sigma)) \big) \big\|_{\LOH} \ds \\
      &\le C \sum_{i = 0}^{j-1} \int_{t_i}^{t_{i+1}}
      t_{j-i}^{-\frac{1-r}{2}} \big\| X_h^{i} - X(\sigma) \big\|_{\LOH}
      \ds \\
      &\le C k \sum_{i = 0}^{j-1} t_{j-i}^{-\frac{1-r}{2}} \big\| X_h^{i} -
      X(t_i) \big\|_{\LOH} \\
      &\quad + C \sum_{i = 0}^{j-1} t_{j-i}^{-\frac{1-r}{2}}
      \int_{t_i}^{t_{i+1}} \big\| X(t_i) - X(\sigma)  \big\|_{\LOH} \ds. \\
    \end{split}
  \end{align*}
  By the H\"older continuity \eqref{eq1:hoelder} we get
  \begin{multline*}
    \sum_{i = 0}^{j-1} t_{j-i}^{-\frac{1-r}{2}}
      \int_{t_i}^{t_{i+1}} \big\| X(t_i) - X(\sigma)  \big\|_{\LOH} \ds\\
      \le C \sum_{i = 0}^{j-1} t_{j-i}^{-\frac{1-r}{2}} 
      \int_{t_i}^{t_{i+1}} (\sigma - t_i)^{\frac{1}{2}} \ds 
      = \frac{2}{3} C k^{\frac{3}{2}} \sum_{i = 0}^{j-1}
      t_{j-i}^{-\frac{1-r}{2}} \\
      \le C k^{\frac{1}{2}} \int_{0}^{t_j} \sigma^{- \frac{1-r}{2}} \ds  \le C
      k^{\frac{1}{2}}.
  \end{multline*}  
  Altogether, $J_1$ is estimated by
  \begin{align}
    \label{eq5:J1}
    \begin{split}
      J_1 &\le C k^{\frac{1}{2}} + C k \sum_{i = 0}^{j-1}
      t_{j-i}^{-\frac{1-r}{2}} \big\| X_h^{i} - 
      X(t_i) \big\|_{\LOH}.
    \end{split}
  \end{align}
  For the estimate of $J_2$ we first note that $F_{kh}(t) = E_{kh}(t)P_h- E(t)$
  and, hence, we can apply Lemma \ref{lem:Fkh1} \emph{(iii)} with $\rho = 1 -
  r$. In the same way as in \eqref{eq4:I2} we obtain
  \begin{align}
    \label{eq5:J2}
    J_2 &\le C \big( h^{1+r} + k^{\frac{1+r}{2}} \big).
  \end{align}
  Likewise, but this time by an application of Lemma \ref{lem:Fkh2} \emph{(i)}
  with $\rho = 1 - r$, we proceed with the term $J_3$ in same way as in
  \eqref{eq4:I3}. By also using \eqref{eq1:reg} this yields
  \begin{align}
    \label{eq5:J3}
    J_3 \le C \big(h^{1+r} + k^{\frac{1+r}{2}} \big) \Big( 1 + \sup_{\sigma \in
    [0,T]} \big\| X(\sigma) \big\|_{\LOH} \Big) \le C \big( h^{1+r} +
    k^{\frac{1+r}{2}} \big). 
  \end{align}
  It remains to estimate the third summand in \eqref{eq5:1} which contains the
  stochastic integrals. With \eqref{eq5:int2} and Lemma \ref{lem:stochint} we
  get 
  \begin{align*}
    & \Big\| \sum_{i = 0}^{j-1} R(k A_h)^{j-i} P_h g(X_h^i) \Delta
    W^{i+1} - \int_{0}^{t_j} E(t_j - \sigma) g(X(\sigma)) \dWs
    \Big\|_{\LOH} \\
    &\quad = \Big\| \int_0^{t_j} E_{kh}(t_j - \sigma) P_h g(X_{kh}(\sigma))
    \dWs - \int_{0}^{t_j} E(t_j - \sigma) g(X(\sigma)) \dWs
    \Big\|_{\LOH}\\
    &\quad \le C \Big( \E \Big[ \Big( \int_{0}^{t_j} \big\| E_{kh}(t_j -
    \sigma) P_h g(X_{kh}(\sigma)) - E(t_j - \sigma) g(X(\sigma))
    \big\|_{L_2^0}^2 \ds \Big)^{\frac{p}{2}} \Big] \Big)^{\frac{1}{p}}.
  \end{align*}
  In the last step Lemma \ref{lem:stochint} is applicable
  since by our definitions \eqref{eq3:defEkh} and \eqref{eq5:defXkh} the
  process $[0, t_j] \ni \sigma \mapsto E_{kh}(t_j - \sigma)P_h
  g(X_{kh}(\sigma)) \in L^0_2$ is adapted and left-continuous and, therefore,
  predictable.

  Next, we use the triangle inequality and obtain
  \begin{align*}
    & \Big( \E \Big[ \Big( \int_{0}^{t_j} \big\| E_{kh}(t_j -
    \sigma) P_h g(X_{kh}(\sigma)) - E(t_j - \sigma) g(X(\sigma))
    \big\|_{L_2^0}^2 \ds \Big)^{\frac{p}{2}} \Big] \Big)^{\frac{1}{p}}\\
    &\quad \le \Big\| \Big( \int_{0}^{t_j} \big\| E_{kh}(t_j - \sigma) P_h \big(
    g(X_{kh}(\sigma)) - g(X(\sigma)) \big) \big\|^2_{L^0_2} \ds
    \Big)^{\frac{1}{2}} \Big\|_{\LOR} \\
    &\quad \quad + \Big\| \Big( \int_{0}^{t_j} \big\| F_{kh}(t_j - \sigma)
    \big( g(X(\sigma)) - g(X(t_j)) \big) \big\|^2_{L^0_2} \ds
    \Big)^{\frac{1}{2}} \Big\|_{\LOR} \\
    &\quad \quad + \Big\| \Big( \int_{0}^{t_j} \big\| F_{kh}(t_j - \sigma)
    g(X(t_j)) \big\|^2_{L^0_2} \ds \Big)^{\frac{1}{2}} \Big\|_{\LOR}\\
    &\quad =: J_4 + J_5 + J_6.
  \end{align*}
  For the estimate of $J_4$ we use the facts that $\| E_{kh}(t) P_h x \| \le C
  \| x \|$ for all $x \in H$ as well as $\| L M \|_{L^0_2} \le \| L \| \| M
  \|_{L_2^0}$ for all linear bounded operators $L \colon H \to H$ and $M \in
  L_2^0$. Together with Assumption \ref{as:1} and the same technique as in
  \eqref{eq4:I4} we get
  \begin{align*}
    J_4 &\le C \Big\| \Big( \int_{0}^{t_j} \big\| X_{kh}(\sigma) - X(\sigma)
    \big\|^2 \ds \Big)^{\frac{1}{2}} \Big\|_{\LOR} \\
    &\le C \Big( \int_{0}^{t_j} \big\|X_{kh}(\sigma) - X(\sigma)
    \big\|^2_{\LOH} \ds \Big)^{\frac{1}{2}}\\
    &= C \Big( k \sum_{i=0}^{j-1}  \big\| X_h^i -
    X(t_i)\big\|^2_{\LOH} \Big)^{\frac{1}{2}} \\
    &\quad + C \Big( \sum_{i=0}^{j-1} 
    \int_{t_i}^{t_{i+1}} \big\|X(t_i) - X(\sigma) \big\|^2_{\LOH} \ds
    \Big)^{\frac{1}{2}}.
  \end{align*}
  By the H\"older continuity \eqref{eq1:hoelder} it holds that
  \begin{align*}
    \sum_{i=0}^{j-1} 
    \int_{t_i}^{t_{i+1}} \big\|X(t_i) - X(\sigma) \big\|^2_{\LOH} \ds
     & \le C \sum_{i=0}^{j-1} 
    \int_{t_i}^{t_{i+1}} (\sigma - t_i) \ds
     \le C k.    
  \end{align*}
  Hence, we have
  \begin{align}
    \label{eq5:J4}
    J_4 \le C k^{\frac{1}{2}} + C \Big( k \sum_{i = 0}^{j-1} \big\| X_h^i -
    X(t_i) \big\|^2_{\LOH} \Big)^{\frac{1}{2}}.
  \end{align}
  In the way same as in \eqref{eq4:I5}, we apply Lemma \ref{lem:Fkh1}
  \emph{(i)} with $\mu = 1 + r$ and $\nu = 0$ and we derive the estimate
  \begin{align}
    \label{eq5:J5}
    J_5 \le C \big( h^{1+r} + k^{\frac{1+r}{2}} \big),
  \end{align}
  where we also used Assumption \ref{as:1} and \eqref{eq1:hoelder}. By
  \eqref{eq1:reg} and the same
  arguments which gave \eqref{eq4:I6} we get
  \begin{align}
    \label{eq5:J6}
    J_6 \le C \big( h^{1+r} + k^{\frac{1+r}{2}} \big).
  \end{align}
  To summarize our estimate of \eqref{eq5:1}, by \eqref{eq5:J1} to
  \eqref{eq5:J6} we have shown that
  \begin{align*}
    \big\| X_h^j - X(t_j) \big\|_{\LOH}^2 &\le C \big( h^{1+r} +
    k^{\frac{1}{2}} + k^{\frac{1+r}{2}} \big)^2 \\&\quad + C \Big( k \sum_{i =
    0}^{j-1} t_{j-i}^{-\frac{1-r}{2}} \big\| X_h^{i} - X(t_i) \big\|_{\LOH}
    \Big)^2 \\
    &\quad + C  k \sum_{i = 0}^{j-1} \big\| X_h^i -
    X(t_i) \big\|^2_{\LOH}.
  \end{align*}
  Further, we have
  \begin{align*}
    k \sum_{i = 0}^{j-1} \big\| X_h^i -
    X(t_i) \big\|^2_{\LOH} \le T^{\frac{1-r}{2}} k \sum_{i = 0}^{j-1}
    t_{j-i}^{-\frac{1-r}{2}} \big\| X_h^i - X(t_i) \big\|^2_{\LOH},    
  \end{align*}
  and by H\"older's inequality 
  \begin{align*}
    &k \sum_{i = 0}^{j-1} t_{j-i}^{-\frac{1-r}{2}} \big\| X_h^{i} - X(t_i) 
    \big\|_{\LOH} \\&\quad \le \Big( k \sum_{i = 0}^{j-1}
    t_{j-i}^{-\frac{1-r}{2}} 
    \Big)^{\frac{1}{2}} \Big( k \sum_{i = 0}^{j-1} t_{j-i}^{-\frac{1-r}{2}}
    \big\| X_h^{i} - X(t_i) \big\|_{\LOH}^2 \Big)^{\frac{1}{2}} \\
    &\quad \le \Big( \frac{2}{r + 1} T^{\frac{r+1}{2}} \Big)^{\frac{1}{2}}
    \Big( k \sum_{i = 0}^{j-1} t_{j-i}^{-\frac{1-r}{2}} 
    \big\| X_h^{i} - X(t_i) \big\|_{\LOH}^2 \Big)^{\frac{1}{2}}.
  \end{align*}
  Hence, by setting $\varphi_j = \big\| X_h^j - X(t_j) \big\|^2_{\LOH}$ we have
  proven that
  \begin{align*}
    \varphi_j \le C \big( h^{1+r} +
    k^{\frac{1}{2}} + k^{\frac{1+r}{2}} \big)^2 + C k \sum_{i=0}^{j-1}
    t_{j-i}^{-\frac{1-r}{2}} \varphi_{i}.
  \end{align*}
  An application of Lemma \ref{lem:discGronwall} completes the proof of the
  theorem.
\end{proof}

\sect{Additive noise}
\label{sec:addnoise}

In this section we focus on stochastic partial differential equations with
additive noise, that is, the $L_2^0$-valued function $g$ does not depend on
$X$. Thus, the SPDE \eqref{eq1:SPDE} has the form
\begin{align}
  \label{eq6:SPDE}
  \begin{split}
  \mathrm{d}X(t) + \big[A X(t) + f(X(t)) \big] \dt &= g \dWt, \quad \text{ for }
  0 \le t \le T,\\
  X(0) &= X_0.
\end{split}
\end{align}

Numerical schemes for the approximation of SPDEs with additive noise 
have been
extensively studied. For example, for schemes which involve the finite element
method we refer to \cite{hausenblas2003,kovacs2010,yan2004} and 
the references therein.

For additive noise Assumption \ref{as:1} simplifies to 
\begin{assumption}
  \label{as:6}
  There exists $r \in [0,1]$ such that the Hilbert-Schmidt operator $g \in
  L_2^0$ satisfies 
  \begin{align*}
    \| g \|_{L_{2,r}^0}  < \infty.
  \end{align*}  
\end{assumption}
Recall that in the case, where $U = H$ and $g\colon H \to H$ is the identity,
Assumption \ref{as:6} reads as follows
\begin{align*}
  \| g \|_{L_{2,r}^0} = \| A^{\frac{r}{2}} g \|_{L_2^0} = \sum_{m=1}^{\infty}
  \| A^{\frac{r}{2}} Q^{\frac{1}{2}} \varphi_m \|^2 < \infty,
\end{align*}
where $(\varphi_m)_{m \ge 1}$ denotes an arbitrary orthonormal basis of $H$.
In particular, if $r=0$ we have $\| g \|_{L_{2}^0} = \mathrm{Tr}(Q) < \infty$
which is a common assumption on the covariance operator $Q$ (see
\cite{daprato1992,roeckner2007}).

Under Assumptions \ref{as:2}, \ref{as:3} and \ref{as:6} with $r\in [0,1]$, $p
\in [2, \infty)$ there exists a mild solution $X\colon [0,T] \times \Omega \to
\dot{H}^{1+r}$ to \eqref{eq6:SPDE} (see \cite{daprato1992} and \cite[Corollary
5.2]{kl2010a}). 

In particular, we stress the fact that the parameter value $r =
1$ is included for SPDEs with additive noise. As the next corollary shows the
same is true for the error estimates of the numerical approximations
\eqref{eq4:mildsemi} and \eqref{eq5:scheme}.

\begin{corollary}
  Let Assumptions \ref{as:2}, \ref{as:3} and
  \ref{as:6} hold for some $r \in [0,1]$, $p \in [2, \infty)$. Let $X$ denote
  the mild solution to \eqref{eq6:SPDE}.

  (i) Under Assumption \ref{as:4} there exists a constant $C$, independent of
  $h  \in (0,1]$, such that
  \begin{align*}
    \big\| X_h(t) - X(t) \big\|_{\LOH} \le C h^{1+r}, \quad \text{ for all } t
    \in (0,T],
  \end{align*}
  where $X_h$ is the corresponding spatially semidiscrete approximation
  \eqref{eq4:mildsemi}.

  (ii) Under Assumptions \ref{as:4} and \ref{as:5} there exists a constant $C$,
  independent of $k,h \in (0,1]$, such that
  \begin{align*}
    \big\| X_h^j - X(t_j) \big\|_{\LOH} \le C ( h^{1+r} +
    k^{\frac{1}{2}} \big), \quad \text{for all } j = 1, \ldots, N_k,
  \end{align*}
  where $X_h^j$ is the corresponding spatio-temporal discrete
  approximation \eqref{eq5:scheme}.  
\end{corollary}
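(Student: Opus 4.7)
The plan is to run the proofs of Theorems \ref{th:semi} and \ref{th5:full} essentially verbatim, exploiting that $g$ is constant. The point is that the error terms involving $g(X_h(\sigma)) - g(X(\sigma))$ and $g(X(\sigma)) - g(X(t))$ both vanish identically. These were precisely the terms (namely $I_4, I_5$ in the semidiscrete proof and $J_4, J_5$ in the fully discrete one) which either contributed to the Gronwall-type feedback or, in the case of $I_5$, forced the restriction $r<1$ via the non-integrable weight $(t-\sigma)^{-r}$. Removing them both simplifies the Gronwall argument and permits the endpoint $r=1$.

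For part (i), I would decompose $X_h(t) - X(t)$ as in \eqref{eq4:main}. The initial-value term is bounded by $Ch^{1+r}\|X_0\|_{1+r,L^p(\Omega;\mathbb{R})}$ via Lemma \ref{lem:Fh1}(i) with $\mu=\nu=1+r$, which holds for all $r\in[0,1]$. The $f$-integral produces exactly the three terms $I_1, I_2, I_3$ from the proof of Theorem \ref{th:semi}, and since the Hölder bound \eqref{eq1:hoelder} with $s=0$ yields the exponent $\tfrac{1}{2}$ independent of $r$, their estimates carry over unchanged and give $Ch^{1+r}$ plus the Gronwall kernel $(t-\sigma)^{-(1-r)/2}$. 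For the stochastic integral, only the counterpart of $I_6$ survives; by Lemma \ref{lem:Fh2}(ii) applied with $\rho=r\in[0,1]$ one obtains
\begin{align*}
  \Big\| \Big( \int_0^t \| F_h(t-\sigma) g \|_{L_2^0}^2 \ds \Big)^{1/2} \Big\|_{\LOR} \le C h^{1+r} \| g \|_{L^0_{2,r}},
\end{align*}
which is finite by Assumption \ref{as:6} and, crucially, valid at the endpoint $r=1$. The generalized Gronwall lemma (Lemma \ref{lem:Gronwall}) then closes the argument.

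For part (ii), I would follow the proof of Theorem \ref{th5:full} word for word: decompose as in \eqref{eq5:1}, handle the initial error by \cite[Theorem 7.8]{thomee2006}, estimate $J_1, J_2, J_3$ exactly as before using Lemmas \ref{lem:Fkh1}(iii) and \ref{lem:Fkh2}(i) with $\rho=1-r$, and observe that $J_4 = J_5 = 0$ by constancy of $g$. For $J_6$, Lemma \ref{lem:Fkh2}(ii) with $\rho=r\in[0,1]$ (using Assumption \ref{as:5}) gives
\begin{align*}
  J_6 \le C (h^{1+r} + k^{(1+r)/2}) \| g \|_{L_{2,r}^0} \le C (h^{1+r} + k^{1/2}),
\end{align*}
since $k\in(0,1]$ forces $k^{(1+r)/2} \le k^{1/2}$. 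Collecting the estimates and applying the discrete Gronwall Lemma \ref{lem:discGronwall} with $\beta=(1+r)/2$ yields the claim.

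The only point that needs attention is verifying that every invocation of the deterministic error lemmas genuinely admits $r=1$. This is clean: Lemma \ref{lem:Fh2}(ii) and Lemma \ref{lem:Fkh2}(ii) are both stated for $\rho\in[0,1]$; Lemma \ref{lem:Fh1}(iii) and Lemma \ref{lem:Fkh1}(iii) are applied with $\rho=1-r\in[0,1]$; and the spatial regularity $X(t)\in\dot{H}^{1+r}$ with $r=1$ for additive noise is exactly what \cite[Corollary 5.2]{kl2010a} provides. Thus no limiting step in $r\uparrow 1$ is needed, and the constants remain finite at the endpoint — the only real improvement over the multiplicative case.
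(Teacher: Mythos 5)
Your proposal is correct and follows essentially the same route as the paper: the paper likewise invokes Theorems \ref{th:semi} and \ref{th5:full} directly for $r\in[0,1)$ and then observes that the only place the restriction $r<1$ enters is the estimate of $I_5$ (resp.\ $J_5$), which vanishes identically for additive noise, so the endpoint $r=1$ is admissible. Your additional remarks (that $I_4$, $J_4$ also vanish, and the explicit check that each deterministic lemma admits $\rho=1$) are accurate but not needed beyond what the paper records.
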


\begin{proof}
  For \emph{(i)} and \emph{(ii)} the assertion follows directly from Theorems
  \ref{th:semi} 
  and \ref{th5:full} for all parameter values $r \in [0,1)$. 

  A close inspection of the proof of Theorem \ref{th:semi} shows that the
  condition $r<1$ is
  only required in the estimate \eqref{eq4:I5}. But in the case of additive
  noise the term $I_5$ is equal to $0$. Hence, the convergence result
  holds true by the same arguments for $r = 1$.

  The same is true for \emph{(ii)} where the condition $r<1$ only shows up in
  the estimate of \eqref{eq5:J5}.
\end{proof}

We remark that a similar convergence result to \emph{(i)} can be found in
\cite[Prop.\ 2.3]{kovacs2010}. But there the authors had to incorporate a
singularity of the form $\max(0, \log(\frac{t}{h^2}))$ which is now removed. 

\subsection*{Acknowledgment} 
Most of the research in this article has been carried out during a
stay at Chalmers University of Technology. I would
like to thank my host Prof.~S. Larsson and his working group for the generous
hospitality and the inspiring environment they created for me. My thanks also
go to my supervisor Prof.~W.-J.~Beyn and the 
%Deutscher Akademischer Austausch Dienst 
German Academic Research Service
(DAAD) for making the stay possible.

%\bibliographystyle{plain}
%\bibliography{../lit}
\def\cprime{$'$} \def\polhk#1{\setbox0=\hbox{#1}{\ooalign{\hidewidth
  \lower1.5ex\hbox{`}\hidewidth\crcr\unhbox0}}}

\end{document}